\newcommand{\R}{\mathbb R}
\newtheorem{theorem}{Theorem}[section]
\newtheorem{definition}{Definition}[section]
\newtheorem{remark}{Remark}[section]
\newtheorem{lemma}{Lemma}[section]
\newtheorem{example}{Example}[section]
\newcommand{\orho}{\overline{\rho}}
\newcommand{\ov}{\overline{v}}
\newcommand{\oxi}{\overline{\xi}}
\newcommand{\ds}{\displaystyle}
\renewcommand{\epsilon}{\varepsilon}
\newcommand{\eps}{\epsilon}
\renewcommand{\phi}{\varphi}
\newenvironment{proofof}[1]{\smallskip\noindent\emph{Proof of #1.}%
\hspace{1pt}}{\hspace{-5pt}{\nobreak\quad\nobreak\hfill\nobreak%
$\square$\vspace{8pt}\par}\smallskip\goodbreak}
\newlength{\captionwidth}
\long\def\@makecaption#1#2{%
   \vskip 10\p@
   \setbox\@tempboxa\hbox{#1: #2}%
   \ifdim \wd\@tempboxa > \captionwidth %\hsize
       \hbox to\hsize{\hfil
       \parbox[t]{\captionwidth}{
       \small#1: \small#2\par}
       \hfil}
     \else
       \hbox to\hsize{\hfil\box\@tempboxa\hfil}%
   \fi}
\begin{document}

\title{Viscous profiles in models of collective movements\\ with negative diffusivities}

\author{Andrea Corli\\
\small\textit{Department of Mathematics and Computer Science, University of Ferrara}\\
\small\textit{I-44121 Italy, e-mail: andrea.corli@unife.it},
\and Luisa Malaguti\\
\small\textit{Department of Sciences and Methods for Engineering, University of Modena and Reggio Emilia}\\
\small\textit{I-42122 Italy, e-mail: luisa.malaguti@unimore.it}}

%\date{}
\maketitle
\begin{abstract}
In this paper we consider an advection-diffusion equation, in one space dimension, whose diffusivity can be negative. Such equations arise in particular in the modeling of vehicular traffic flows or crowds dynamics, where a negative diffusivity simulates aggregation phenomena. We focus on traveling-wave solutions that connect two states whose diffusivity has different signs; under some geometric conditions we prove the existence, uniqueness (in a suitable class of solutions avoiding plateaus) and sharpness of the corresponding profiles. Such results are then extended to the case of end states where the diffusivity is positive but it becomes negative in some interval between them. Also the vanishing-viscosity limit is considered. At last, we provide and discuss several examples of diffusivities that change sign and show that our conditions are satisfied for a large class of them in correspondence of real data.

\vspace{1cm}
\noindent \textbf{AMS Subject Classification:} 35K65; 35C07, 35K55, 35K57

\smallskip
\noindent
\textbf{Keywords:} Degenerate parabolic equations, negative diffusivity, traveling-wave solutions, collective movements.
\end{abstract}

\section{Introduction}\label{s:I}

We are interested in the advection-diffusion equation
\begin{equation}\label{e:E}
\rho_t + f(\rho)_x=\left(D(\rho)\rho_x\right)_x, \qquad t\ge 0, \, x\in \R.
\end{equation}
The unknown variable $\rho$ is understood as a density or concentration and is valued in the interval $[0,1]$; the flux $f$ and the diffusivity $D$ are smooth functions. The main assumption of our study is that $D$ {\em changes sign}, even more than once. Equation \eqref{e:E} is then a forward-backward parabolic equation. In spite of the fact that these equations are well-known to be unstable in the backward regime, nevertheless they arise in a natural way in several physical and biological models, see \cite{Kerner-Osipov, Padron} and references there.

\smallskip

Equation \eqref{e:E} also arises in the modeling of {\em collective movements}, for instance vehicular traffic flows or crowds dynamics. In these cases, vehicles or pedestrians are assumed to move along a straight road or corridor, respectively; their normalized density at time $t$ and place $x$ is represented by $\rho(x,t)$. The corresponding flow is $f\left(\rho\right)=\rho v(\rho)$, where the velocity $v$ is an assigned function.

The first models proposed in \cite{Lighthill-Whitham, Richards} had no diffusivity; we refer to \cite{Garavello-Han-Piccoli_book, Rosinibook} for updated information on this case. Then, the density-flow pairs lie on a curve (the graph of $f$) in the $(\rho,f)$-plane. However, experimental data show that this is not the case \cite{Helbing, Kerner}: such pairs usually cover a two-dimensional region. To reproduce this effect, either one considers second-order models \cite{Aw-Rascle, Payne, Zhang} or, as in this paper, introduces a diffusive term. In the latter case the physical flow is $q=f(\rho) - D(\rho)\rho_x$, see \cite{Bellomo-Delitala-Coscia, Bellomo-Dogbe, BTTV, Nelson_2000}, and the density-flow pairs now correctly cover a full two-dimensional region in the $(\rho,q)$-plane. Moreover, the introduction of $D$ avoids the appearance of shock waves and then the occurrence of an infinite acceleration, which do not seem to fit well with the usual perception of collective flows.

We refer to \cite{Bellomo-Delitala-Coscia, Bellomo-Dogbe, BTTV} for several models where the diffusivity $D$ can vanish but otherwise remains positive. The {\em negativity} of $D$ simulates an aggregative behavior; it occurs, for instance, in vehicular flows for high car densities and limited sight distance ahead \cite{Nelson_2000}. An analogous modeling can be made in the framework of crowds dynamics; in this case, as we propose in this paper, it may simulates panic behaviors in overcrowded environments \cite{{Colombo-Rosini2005}}.

\smallskip

In this paper we are concerned with solutions $\rho$  of equation \eqref{e:E} having a low density in the past and a higher density in the future, or conversely. To this aim, we restrict our investigation to {\em traveling-wave solutions} $\rho(x,t)=\phi(x-ct)$. The  profile $\phi$ satisfies the differential equation
\begin{equation}\label{e:ODE}
\left( D(\phi)\phi^{\prime}\right)^{\prime}+(c\phi -f(\phi))^{\prime}=0,
\end{equation}
where the constant $c$ denotes its speed; more precisely, solutions are meant in the weak sense according to the following Definition \ref{d:tws}. We are interested, in particular, in wavefront solutions connecting a value $\ell^-$ to a value $\ell^+$, i.e., such that
\begin{equation}\label{e:infty}
\phi(-\infty)=\ell^-, \qquad \phi(+\infty)=\ell^+,
\end{equation}
either with $D(\ell^-)>0$ and $D(\ell^+)<0$, or with $D(\ell^\pm)>0$ but then $D$ is negative in an interval contained in $(\ell^-,\ell^+)$. As far as wavefronts are concerned, the special case when $f=0$ but equation \eqref{e:E} is endowed of a source term $g$ has been considered by many authors; we refer to \cite{Bao-Zhou2014, Bao-Zhou2017, Maini-Malaguti-Marcelli-Matucci2006} for $D$ changing sign once and monostable $g$, \cite{Maini-Malaguti-Marcelli-Matucci2007} for the bistable case, \cite{Ferracuti-Marcelli-Papalini, Kuzmin-Ruggerini} for $D$ changing sign twice where $g$ is, respectively, monostable and bistable.

We refer to \cite{Nelson_2000, Nelson_2002} for several interpretations of the wavefront solutions to equation \eqref{e:E} in the case $D(\rho)= -\rho v'(\rho)\left(\delta+\tau\rho v'(\rho)\right)$ and in the framework of vehicular flows. Here, $\delta$ is an anticipation distance and $\tau$ a reaction time. However, even if also the case when $D<0$ is commented in \cite{Nelson_2000}, in \cite{Nelson_2002} only the case when $D$ is positive is treated.

\smallskip

About equation \eqref{e:E}, the existence of wavefront solutions in intervals where $D$ does not change sign follows by a result in \cite{GK}; this topic is briefly discussed in Section \ref{s:preliminary}, where we extend that result to cover the case when $D$ is negative. On the other hand, the case when a wavefront solution crosses an interval where $D$ changes sign is more delicate and, to the best of our knowledge, has never been considered. Our main results are provided in Section \ref{sec:main} and give necessary and sufficient conditions for the existence of wavefronts in the cases $D$ changes sign once or twice; generalizations to further or opposite changes of sign are straightforward. We also study the smoothness of the profiles, in particular at the singular points where $D$ changes sign. At last, we prove the vanishing-viscosity limit of the wavefronts to discontinuous (nonentropic) solutions of the corresponding hyperbolic conservation law
\begin{equation}\label{e:cl}
\rho_t+f(\rho)_x=0.
\end{equation}
As a consequence, Theorem \ref{t:main2} below shows that some nonclassical shock waves \cite{LeFloch} considered in \cite{Colombo-Rosini2005} (in the hyperbolic regime $D=0$), in the modeling of panic situation in crowds dynamics, admit a viscous profile. This topic is also discussed in Section \ref{sec:main}. The proofs of our results are given in Section \ref{s:proof}. The main applications are collected in Section \ref{sec:ex}; further examples are provided in Section \ref{s:preliminary} and \ref{s:proof}.

%%%%%%%%%%%%%%%%%%
\section{Main results}\label{sec:main}
\setcounter{equation}{0}
We assume that the flux function $f$ and the diffusivity $D$ satisfy, for some $\alpha\in(0,1)$,
\begin{itemize}
\item[{(f)}]\, $f\in C^1[0,1]$, $f(0)=0$;

\item[{(D1)}] \, $D\in C^1[0,1]$, $D(\rho)>0$ for $\rho \in (0, \alpha)$ and $D(\rho)<0$ for $\rho\in(\alpha, 1)$.

\end{itemize}
\noindent
We notice that in this case we clearly have $D(\alpha)=0$; moreover, the condition $f(0)=0$ in (f) is not really an assumption because $f$ is defined up to an additive constant. We warn the reader that in Figure \ref{f:f} and in the following ones we represent $f$ with $f(\rho)>0$ in $(0,1)$, $f(1)=0$ and $D$ with $D(0)=D(1)=0$. Such assumptions are common in dealing with collective movements, see Section \ref{sec:ex}, but {\em are by no means necessary} for the results below.

%%%%%%%%%%%%%%%%%%%%%%%% Figure f
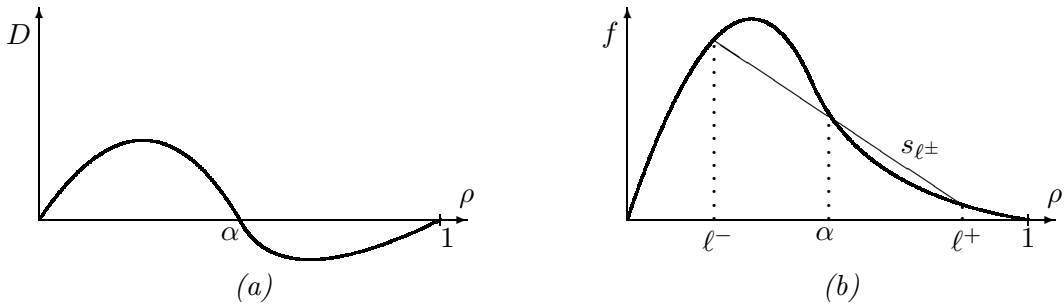
\begin{figure}[htbp]
\begin{picture}(100,125)(80,-25)
\setlength{\unitlength}{1pt}

%(a)
\put(110,0){
\put(0,0){\vector(1,0){160}}
\put(160,8){\makebox(0,0){$\rho$}}
\put(0,0){\vector(0,1){80}}
\put(-3,70){\makebox(0,0)[r]{$D$}}

\put(150,-3){\makebox(0,0)[tl]{$1$}}
\put(150,-2){\line(0,1){4}}

\put(0,0){\thicklines{\qbezier(0,0)(40,60)(75,0)}}
\put(0,0){\thicklines{\qbezier(75,0)(90,-30)(150,0)}}

\put(75,-3){\makebox(0,0)[tr]{$\alpha$}}

\put(80,-20){\makebox(0,0)[t]{{\em (a)}}}
}

%(b)
\put(330,0){
\put(0,0){\vector(1,0){160}}
\put(160,8){\makebox(0,0){$\rho$}}
\put(0,0){\vector(0,1){80}}
\put(-3,70){\makebox(0,0)[r]{$f$}}

\put(150,-3){\makebox(0,0)[t]{$1$}}
\put(150,-2){\line(0,1){4}}

\put(0,0){\thicklines{\qbezier(0,0)(40,120)(70,50)}}
\put(0,0){\thicklines{\qbezier(70,50)(88,10)(150,0)}}
\put(65,46){\line(3,-2){60}}
\put(65,46){\line(-3,2){33}}

\multiput(74,0)(0,5){8}{$.$}
\put(74,-3){\makebox(0,0)[t]{$\alpha$}}

\put(110,30){\makebox(0,0)[t]{$s_{\ell^\pm}$}}

\multiput(124,0)(0,4){2}{$.$}
\put(127,-3){\makebox(0,0)[t]{$\ell^+$}}

\multiput(31,0)(0,5){14}{$.$}
\put(34,-3){\makebox(0,0)[t]{$\ell^-$}}
\put(80,-20){\makebox(0,0)[t]{{\em (b)}}}
}

\end{picture}
\caption{\label{f:f}{{\em (a)}: a diffusivity $D$ satisfying assumption (D1); {\em (b)}: the flux function $f$. }}
\end{figure}
%%%%%%%%%%%%%%%%%%%%%%%%%%%%% End Figure f

Now, we recall some definitions on traveling-wave solutions \cite{GK}; they are given under assumptions somewhat weaker than (f) and (D1).

\begin{definition}\label{d:tws} Let $I\subseteq \R$ be an open interval, $D\in C[0,1]$, $f\in C[0,1]\cap C^1(0,1)$, $f(0)=0$; consider a function $\varphi \colon I \to [0, 1]$ such that $\varphi\in C(I)$ and $D(\varphi) \varphi^{\, \prime}\in L_{\rm loc}^1(I)$. For all $(x,t)$ with $x-ct \in I$, the function $\rho(x,t)=\varphi(x-ct)$ is said a {\em traveling-wave} solution of equation \eqref{e:E} with wave speed $c$ and wave profile $\phi$ if
\begin{equation}\label{e:def-tw}
\int_I \left(D\left(\phi(\xi)\right)\phi'(\xi) - f\left(\phi(\xi)\right) + c\phi(\xi) \right)\psi'(\xi)\,d\xi =0,
\end{equation}
for every $\psi\in C_0^\infty(I)$.

A traveling-wave solution is {\em global} if $I=\R$ and {\em strict} if $I\ne \R$ and $\phi$ is not extendible to $\R$. It is {\em classical} if $\varphi$ is differentiable, $D(\varphi) \varphi'$ is absolutely continuous and \eqref{e:ODE} holds a.e.; at last, it is {\em sharp at $\ell$} if there exists $\xi_{\ell}\in I$ such that $\phi(\xi_{\ell})=\ell$, with $\phi$ classical in $I\setminus\{\xi_0\}$ and not differentiable at $\xi_{\ell}$.

A global, bounded traveling-wave solution with a monotonic, non-constant profile $\phi$  satisfying \eqref{e:infty} with $\ell^-, \ell^+ \in [0, 1]$ is said to be a {\em wavefront solution} from $\ell^-$ to  $\ell^+$.
\end{definition}

Above, monotonic is meant in the sense that $\xi_1<\xi_2$ implies $\phi(\xi_1)\le \phi(\xi_2)$. We denote by $s_{\ell^\pm}=s_{\ell^\pm}(\rho)$ the function whose graph is the line joining $\left(\ell^-, f(\ell^-)\right)$ with $\left(\ell^+, f(\ell^+)\right)$, see Figure \ref{f:f}{\em (b)}.

\begin{remark}\label{r:ac}
Assume {\rm (f)}, {\rm (D1)} and let $\rho$ be a traveling-wave solution of \eqref{e:E} with profile $\phi$ defined in $I$ and speed $c$. We claim that $\phi$ is classical in every interval $I_{\pm}\subseteq I$ where $D\left(\phi(\xi)\right)\gtrless 0$ for $\xi \in I_{\pm}$; indeed, $\phi\in C^2(I_\pm)$.

As far as the interval $I_+$ is concerned, this follows by \cite[Lem. 2.20 and Thm. 2.39]{GK}. About $I_-$, consider the equation
\begin{equation}\label{e:rtR}
r_t+g(r)_x=\left(E(r)r_x \right)_x, \quad r\in[0,1],
\end{equation}
with $E(r):=-D(r)$ and $g(r):=-f(r)$, $r \in [0,1]$.
It is immediate to prove that $\phi$ is a traveling-wave solution of \eqref{e:E} with speed $c$ if and only if it is a traveling-wave solution of \eqref{e:rtR} with opposite speed $-c$. Since $E$ and $D$ have opposite signs, we conclude that $\phi$ is classical also in $I_{-}$ and $\phi\in C^2(I_-)$.
\end{remark}

It is well-known that, for positive diffusivities, profiles are uniquely determined up to a shift: if $\phi(\xi)$ is a profile, then also $\phi(\xi+\bar\xi)$ is another profile for every $\bar\xi\in\R$ \cite{GK}. This still holds under (D1), but the loss of uniqueness is more severe as we now show. Assume that \eqref{e:E} admits a wavefront solution with profile $\phi$ and speed $c$. Since $\phi$ is monotone and continuous by Definition \ref{d:tws}, then there is a unique $\xi_0$, which we can assume to coincide with $0$ without loss of generality, such that
\begin{equation}\label{e:xi0}
\phi(0) =\alpha\quad \hbox{ and } \quad \phi(\xi)<\alpha\quad \hbox{ for } \quad\xi<0.
\end{equation}
Similarly, there is a unique $\xi_1=\xi_1\ge 0$ with
\begin{equation}\label{e:xi1}
\phi(\xi_1)=\alpha\quad  \hbox{ and } \quad \phi(\xi)>\alpha\quad \hbox{ for } \xi>\xi_1.
\end{equation}
If $\xi_1>0$, then $\phi\equiv \alpha$ in the whole interval $[0, \xi_1]$, see Figure \ref{f:TW}.

%%%%%%%%%%%%%%%%%%%%%%%% Figure TW
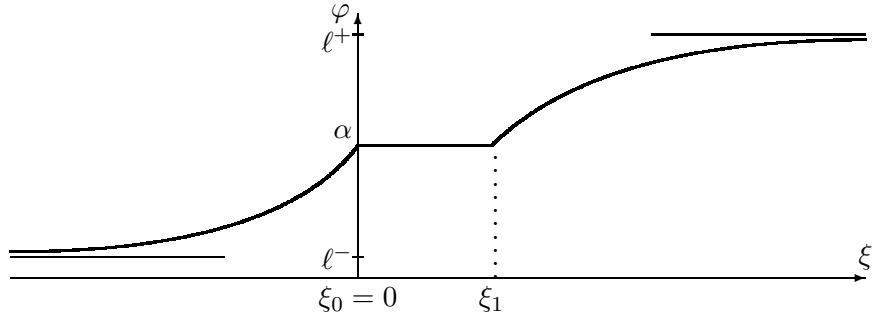
\begin{figure}[htbp]
\begin{picture}(100,125)(80,-15)
\setlength{\unitlength}{1pt}

\put(270,0){
\put(0,0){\vector(1,0){190}}
\put(0,0){\line(-1,0){130}}
\put(190,8){\makebox(0,0){$\xi$}}
\put(0,0){\vector(0,1){100}}
\put(-3,100){\makebox(0,0)[r]{$\phi$}}

\put(-2,8){\line(1,0){4}}
\put(-2,8){\makebox(0,0)[r]{$\ell^-$}}
\put(-130,8){\line(1,0){80}}

\put(-2,92){\line(1,0){4}}
\put(-2,90){\makebox(0,0)[r]{$\ell^+$}}
\put(190,92){\line(-1,0){80}}

\put(-2,53){\makebox(0,0)[rb]{$\alpha$}}

\put(0,0){\thicklines{\qbezier(-130,10)(-30,10)(0,50)}}
\put(0,0){\thicklines{\qbezier(0,50)(30,50)(50,50)}}
\put(0,0){\thicklines{\qbezier(50,50)(90,90)(190,90)}}

\put(0,-3){\makebox(0,0)[t]{$\xi_0=0$}}

\multiput(50,0)(0,5){10}{$.$}
\put(50,-3){\makebox(0,0)[t]{$\xi_1$}}

}

\end{picture}
\caption{\label{f:TW}{Under (D1), a profile $\phi$ in the case $\xi_1>0$.}}
\end{figure}
%%%%%%%%%%%%%%%%%%%%%%%%%%%%% End Figure TW

The following lemma shows that the case $\xi_1>0$ may really occur.

\begin{lemma}\label{l:lemma1}
Assume {\rm (f)} and {\rm (D)}; suppose that \eqref{e:E} admits a wavefront whose profile satisfies \eqref{e:infty}. Then, for every $\xi_1\ge0$ equation \eqref{e:E} has a wavefront whose profile still satisfies  \eqref{e:infty} and also \eqref{e:xi0}, \eqref{e:xi1}.
\end{lemma}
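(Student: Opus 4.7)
The idea is to construct the required profile by surgery on the given one: cut $\phi$ at its $\alpha$-level set, and glue the two pieces back with a plateau of arbitrary length $\xi_1\ge 0$ at the height $\alpha$. By Definition \ref{d:tws}, $\phi$ is continuous, monotone and satisfies $\phi(\pm\infty)=\ell^\pm$; under (D1) we have $\ell^-<\alpha<\ell^+$, hence the level set $\phi^{-1}(\alpha)$ is a non-empty compact interval $[a,b]$. Setting $\phi_L(\xi):=\phi(\xi+a)$ for $\xi\le 0$ and $\phi_R(\xi):=\phi(\xi+b)$ for $\xi\ge 0$ (both equal to $\alpha$ at the origin), for each $\xi_1\ge 0$ I define
\begin{equation*}
\tilde\phi(\xi):=
\begin{cases}
\phi_L(\xi), & \xi\le 0,\\
\alpha, & 0\le \xi\le \xi_1,\\
\phi_R(\xi-\xi_1), & \xi\ge \xi_1.
\end{cases}
\end{equation*}
By construction $\tilde\phi$ is monotone, continuous, $[0,1]$-valued, satisfies \eqref{e:infty}, \eqref{e:xi0}, \eqref{e:xi1}, and lies in the regularity class required by Definition \ref{d:tws}.

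To verify that $\tilde\phi$ solves \eqref{e:def-tw} with the same speed $c$, I show that the associated function $\eta(\xi):=D(\tilde\phi(\xi))\tilde\phi'(\xi)-f(\tilde\phi(\xi))+c\tilde\phi(\xi)$ is a.e.\ constant on $\R$. By Remark \ref{r:ac} the original $\phi$ is classical on the open sets $(-\infty,a)$ and $(b,+\infty)$, and one integration of \eqref{e:ODE} there gives $\eta_\phi\equiv K_\pm$ a.e.\ on each side; the global weak formulation for $\phi$ (equivalently, $\eta_\phi'=0$ in the sense of distributions on $\R$) forces $K_-=K_+=:K$. Moreover, on $[a,b]$ one has $\phi\equiv\alpha$, $\phi'=0$ a.e.\ and $D(\alpha)=0$, whence $\eta_\phi=c\alpha-f(\alpha)$ there, yielding the crucial matching $K=c\alpha-f(\alpha)$. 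Passing to $\tilde\phi$, translation invariance of \eqref{e:ODE} gives $\eta\equiv K$ a.e.\ on $(-\infty,0)$ and on $(\xi_1,+\infty)$, while on the inserted plateau $(0,\xi_1)$ direct computation yields $\eta\equiv c\alpha-f(\alpha)=K$. Hence $\eta\equiv K$ a.e.\ on $\R$, and for every $\psi\in C_0^\infty(\R)$,
\begin{equation*}
\int_\R \eta(\xi)\,\psi'(\xi)\,d\xi=K\int_\R \psi'(\xi)\,d\xi=0.
\end{equation*}

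The only delicate point is the identification $K=c\alpha-f(\alpha)$ in the degenerate case $a=b$, when the original profile has no genuine plateau. Here the relation $D(\phi)\phi'=K+f(\phi)-c\phi$, combined with continuity of $f$ and $\phi$, shows that $D(\phi)\phi'$ admits the common left- and right-limit $K+f(\alpha)-c\alpha$ as $\xi\to a$; but under (D1) and the monotonicity of $\phi$ the product $D(\phi)\phi'$ is nonnegative on $(-\infty,a)$ and nonpositive on $(a,+\infty)$, so the limit must vanish, giving the desired identity. Once this matching is secured, the construction above produces a wavefront with the required features for every prescribed $\xi_1\ge 0$.
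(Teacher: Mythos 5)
Your proof is correct and follows essentially the same route as the paper: the surgery construction is identical, and your identification $K=c\alpha-f(\alpha)$ in the degenerate case $a=b$ (via the sign of $D(\phi)\phi'$ on either side of the level $\alpha$, forced by monotonicity and (D1)) is precisely the paper's key step \eqref{e:limiti}, i.e.\ that $D(\phi)\phi'\to0$ at the endpoints of the $\alpha$-level set. The only cosmetic difference is that you package the weak formulation as ``the first integral $\eta$ is a.e.\ a single constant on $\R$,'' whereas the paper extracts the same information with test functions concentrated at the gluing points.
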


In other words, the loss of uniqueness of the profiles does not only concern shifts but also the \lq\lq stretching\rq\rq\ of the interval $[0,\xi_1]$, $\xi_1\ge0$. Notice that the latter loss of uniqueness does not occur if the diffusivity  possibly vanishes only at $0$ or $1$ \cite{GK}. Clearly, the profiles mentioned in Lemma \ref{l:lemma1} have the same smoothness.

For sake of simplicity, in the following we mainly focus on the case
\begin{equation}\label{e:xi0xi1=0}
\xi_0=\xi_1=0.
\end{equation}
The case $\xi_1>0$ (and related situations) will be discussed at the end of this section.

Now, we state the main results of this paper.

\begin{theorem}\label{t:main} \ Assume \emph{(f)} and \emph{(D1)}; let $\ell^- \in [0, \alpha)$ and $\ell^+ \in (\alpha, 1]$. Equation \eqref{e:E} admits a wavefront solution whose profile $\phi$ satisfies \eqref{e:infty}  if and only if
\begin{equation}\label{e:existence wfs}
\frac{f(\alpha)-f(\ell^-)}{\alpha-\ell^-}= \frac{f(\ell^+)-f(\alpha)}{\ell^+-\alpha}=:c_{\ell^{\pm}},
\end{equation}
and
\begin{equation}\label{e:existence about f}
\begin{array}{rll}
&f(\rho)>\ds\frac{f(\alpha)-f(\ell^-)}{\alpha-\ell^-}(\rho-\alpha)+f(\alpha), \quad &\rho \in (\ell^-, \alpha),
\\[3mm]
&f(\rho)<\ds\frac{f(\ell^+)-f(\alpha)}{\ell^+-\alpha}(\rho-\alpha)+f(\alpha), \quad &\rho \in (\alpha, \ell^+).
\end{array}
\end{equation}
The value $c_{\ell^{\pm}}$ is its wave speed and
\begin{equation}\label{e:slope}
f^{\, \prime}(\alpha) \le c_{\ell^{\pm}}.
\end{equation}
The profile $\phi$ is sharp in $\ell^-$ if and only if $\ell^-=0=D(0)$; $\phi$ is sharp in $\ell^+$ if and only if $1-\ell^+=0=D(1)$.

Assume \eqref{e:xi0xi1=0}. Then $\phi$ is unique; moreover, $\phi^{\, \prime}(\xi)>0$ when $\ell^-<\phi(\xi)<\ell^+$, $\xi\ne0$, while
\begin{equation}\label{e:slopeLR}
\lim_{\xi\to0}\phi'(\xi)=\left\{ \begin{array}{rl}
\frac{f^{\, \prime}(\alpha) - c_{\ell^{\pm}}}{D^{\, \prime}(\alpha) }& \text{ if } D^{\, \prime}(\alpha)  <0,
\\
\infty & \text{ if } D^{\, \prime}(\alpha)  =0 \hbox{ and }f^{\, \prime}(\alpha) - c_{\ell^{\pm}}<0.
\end{array}
\right.
\end{equation}
\end{theorem}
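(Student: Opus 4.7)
The plan is to split the problem at the point where $\phi=\alpha$, which by \eqref{e:xi0xi1=0} we place at $\xi=0$, and exploit the fact that $D$ has a definite sign on each half. On $(-\infty,0)$ the equation \eqref{e:ODE} is forward parabolic and I would apply the classical wavefront existence/non-existence result recalled in Section~\ref{s:preliminary} (originating from \cite{GK}): a profile from $\ell^-$ to $\alpha$ exists iff the wave speed $c$ equals the chord slope $\frac{f(\alpha)-f(\ell^-)}{\alpha-\ell^-}$ and $f$ lies strictly above this chord on $(\ell^-,\alpha)$. On $(0,+\infty)$, where $D(\phi)<0$, Remark~\ref{r:ac} identifies $\phi$ with a wavefront of the auxiliary forward-parabolic equation \eqref{e:rtR} (with $E=-D>0$, $g=-f$, speed $-c$) from $\alpha$ to $\ell^+$; applying the same result to this auxiliary problem yields the second identity in \eqref{e:existence wfs} and the second inequality in \eqref{e:existence about f}. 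Requiring the two expressions for $c$ on the two halves to agree gives the full chord condition \eqref{e:existence wfs}.

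Conversely, given \eqref{e:existence wfs}--\eqref{e:existence about f}, the two half-profiles exist, share the common speed $c_{\ell^{\pm}}$ and match at $\alpha$ at $\xi=0$. A single integration of \eqref{e:ODE} yields $D(\phi)\phi^{\, \prime}=f(\phi)-c_{\ell^{\pm}}\phi-f(\ell^-)+c_{\ell^{\pm}}\ell^-$ on the left and the analogous identity with $\ell^+$ on the right; \eqref{e:existence wfs} is precisely the compatibility condition that makes the two constants of integration coincide, so testing against $\psi^{\, \prime}\in C_0^\infty(\R)$ produces no jump at $\xi=0$ and the glued function satisfies Definition~\ref{d:tws} as a global weak solution (the local integrability of $D(\phi)\phi^{\, \prime}$ near $0$ is automatic, since the right-hand side above is continuous). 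The strict monotonicity $\phi^{\, \prime}>0$ where $\ell^-<\phi<\ell^+$, $\xi\ne 0$, follows from the strict inequalities in \eqref{e:existence about f}, which make the numerator of $\phi^{\, \prime}$ strictly positive; dividing the first inequality in \eqref{e:existence about f} by $\rho-\alpha<0$ and letting $\rho\to\alpha^-$ gives the bound \eqref{e:slope}.

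For the limit \eqref{e:slopeLR} I would apply de l'Hospital's rule to $\phi^{\, \prime}=[f(\phi)-c_{\ell^{\pm}}\phi-f(\ell^-)+c_{\ell^{\pm}}\ell^-]/D(\phi)$ as $\phi\to\alpha$: both numerator and denominator vanish at $\alpha$, so if $D^{\, \prime}(\alpha)<0$ the limit is $(f^{\, \prime}(\alpha)-c_{\ell^{\pm}})/D^{\, \prime}(\alpha)$, while if $D^{\, \prime}(\alpha)=0$ and $f^{\, \prime}(\alpha)-c_{\ell^{\pm}}<0$ a direct Taylor estimate gives $D(\phi)=o(\phi-\alpha)$ and forces blow-up to $+\infty$ (consistency of signs is ensured by (D1), which forces $D^{\, \prime}(\alpha)\le 0$). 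The sharpness criterion at $\ell^\pm$ reduces to the standard behavior of degenerate-parabolic wavefronts at a zero of the diffusivity \cite{GK}: the profile reaches the endpoint in finite $\xi$ iff $D$ vanishes there, which under (D1) corresponds exactly to $\ell^-=0=D(0)$ and $\ell^+=1=D(1)$. Uniqueness under \eqref{e:xi0xi1=0} follows because each half-wavefront is unique up to translation and the prescription $\phi(0)=\alpha$ fixes both shifts. The most delicate step is the gluing across the degenerate point $\xi=0$: a priori $\phi^{\, \prime}$ blows up there, classical regularity is lost, and one must work in the weak formulation; the common first-integral constant is precisely what kills the would-be Rankine--Hugoniot jump when \eqref{e:def-tw} is tested.
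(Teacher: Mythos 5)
Your proposal is correct and follows essentially the same route as the paper: split the profile at the unique point where it crosses $\alpha$, apply the constant-sign wavefront theory of Theorem~\ref{t:wfsConst} (case (a1) on the left and, via the substitution $E=-D$, $g=-f$ of Remark~\ref{r:ac}, case (b2) on the right), glue using the vanishing of the flux $D(\phi)\phi'$ at the junction, and obtain monotonicity, \eqref{e:slope} and the de l'Hospital limit \eqref{e:slopeLR} from the first integral. The only step you take for granted is the one the paper isolates in Lemma~\ref{l:lemma1}: in the necessity direction one must first prove $\lim_{\xi\to0^\pm}D\left(\phi(\xi)\right)\phi'(\xi)=0$ directly from the weak formulation \eqref{e:def-tw}, before the speed is identified, so that the truncated half-profiles are genuine wavefronts on $[\ell^-,\alpha]$ and $[\alpha,\ell^+]$ to which the classification of Theorem~\ref{t:wfsConst} can legitimately be applied.
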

We refer to Figure \ref{f:f}{\em (b)} for the geometric meaning of conditions \eqref{e:existence wfs} and \eqref{e:existence about f}; in particular, $c_{\ell^{\pm}}=\left(f(\ell^+)-f(\ell^-)\right)/(\ell^+-\ell^-)$ is the slope of the line $s_{\ell^\pm}$. Notice that if condition \eqref{e:existence about f} is satisfied then $f$ has an inflection point, which however does not necessarily coincide with $\alpha$ (see for instance the case illustrated in Figure \ref{f:fN}). Equation \eqref{e:E} may have stationary wavefronts, i.e. with speed $c_{\ell^{\pm}}=0$; this happens  if and only if $f(\ell^-)=f(\ell^+)$ and then $f(\alpha)=f(\ell^\pm)$.

The case $f^{\, \prime}(\alpha) - c_{\ell^{\pm}}=D^{\, \prime}(\alpha)=0$ in \eqref{e:slopeLR} is a bit more delicate. Indeed, in the proof of Theorem \ref{t:main} we shall show
\begin{equation}\label{e:00}
\displaystyle{\lim_{\xi \to 0^-}}\phi^{\, \prime}(\xi)=\displaystyle{\lim_{\sigma \to \alpha}}
\frac{f(\sigma)-\left[f(\alpha) + f^{\, \prime}(\alpha)(\sigma-\alpha)\right]}{D(\sigma)}.
\end{equation}
Therefore, the existence and the value of that limit depends on the relative behavior of $f$ and $D$ in a neighborhood of $\alpha$.

The results of Theorem \ref{t:main} can be easily extended to the case when $D$ satisfies
\begin{itemize}
\item[{(D2)}] \, $D\in C^1[0,1]$, $D(\rho)>0$ for $\rho \in (0, \alpha)\cup (\beta, 1)$ and $D(\rho)<0$ for $\rho\in(\alpha, \beta)$.
\end{itemize}
We refer to Figure \ref{f:f2}{\em (a)} for a possible plot of a diffusivity $D$ satisfying (D2).

%%%%%%%%%%%%%%%%%%%%%%%% Figure f
\begin{figure}[htbp]
\begin{picture}(100,105)(80,-25)
\setlength{\unitlength}{1pt}

%(a)
\put(110,0){
\put(0,0){\vector(1,0){160}}
\put(160,8){\makebox(0,0){$\rho$}}
\put(0,0){\vector(0,1){80}}
\put(-3,70){\makebox(0,0)[r]{$D$}}

\put(150,-3){\makebox(0,0)[t]{$1$}}
\put(150,-2){\line(0,1){4}}

\put(0,0){\thicklines{\qbezier(0,0)(40,60)(70,0)}}
\put(0,0){\thicklines{\qbezier(70,0)(88,-25)(110,0)}}
\put(0,0){\thicklines{\qbezier(110,0)(130,30)(150,0)}}

\put(70,-4){\makebox(0,0)[rt]{$\alpha$}}
\put(110,-3){\makebox(0,0)[t]{$\beta$}}

\put(80,-20){\makebox(0,0)[t]{{\em (a)}}}
}

%(b)
\put(330,0){
\put(0,0){\vector(1,0){160}}
\put(160,8){\makebox(0,0){$\rho$}}
\put(0,0){\vector(0,1){80}}
\put(-3,70){\makebox(0,0)[r]{$f$}}

\put(150,-3){\makebox(0,0)[t]{$1$}}
\put(150,-2){\line(0,1){4}}

\put(0,0){\thicklines{\qbezier(0,0)(40,120)(70,50)}}
\put(0,0){\thicklines{\qbezier(70,50)(88,10)(110,30)}}
\put(0,0){\thicklines{\qbezier(110,30)(130,50)(150,0)}}
\put(72,46){\line(5,-2){70}}
\put(72,46){\line(-5,2){43}}

\multiput(70,0)(0,5){9}{$.$}
\put(70,-3){\makebox(0,0)[t]{$\alpha$}}

\put(100,40){\makebox(0,0)[b]{$s_{\ell^\pm}$}}

\multiput(110,0)(0,5){6}{$.$}
\put(110,-3){\makebox(0,0)[t]{$\beta$}}

\multiput(94,0)(0,5){5}{$.$}
\put(94,-3){\makebox(0,0)[t]{$\gamma$}}

\multiput(139,0)(0,4){5}{$.$}
\put(139,-3){\makebox(0,0)[t]{$\ell^+$}}

\multiput(28,0)(0,5){13}{$.$}
\put(28,-3){\makebox(0,0)[t]{$\ell^-$}}
\put(80,-20){\makebox(0,0)[t]{{\em (b)}}}
}

\end{picture}
\caption{\label{f:f2}{{\em (a)}: the diffusivity $D$ in case (D2); {\em (b)}: the flux function $f$.}}
\end{figure}
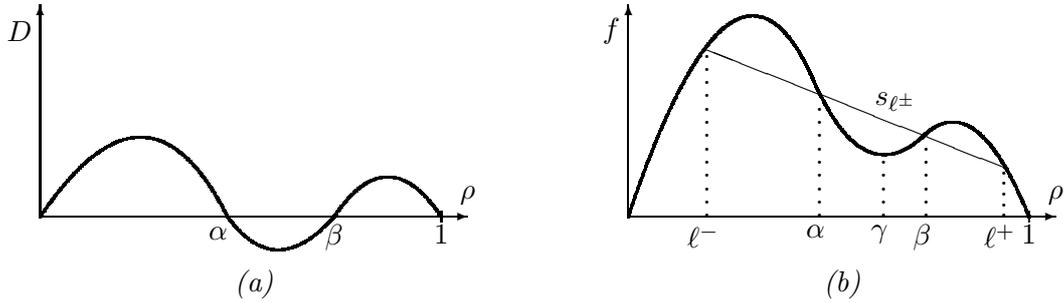
%%%%%%%%%%%%%%%%%%%%%%%%%%%%% End Figure f

As in the discussion preceding Lemma \ref{l:lemma1}, under \eqref{e:xi0} there are three (unique) values $0\le\xi_1\le\xi_2\le\xi_3$ such that
\begin{equation}\label{e:xi12}
\begin{array}{ll}
\phi(\xi)=\alpha\ \hbox{ for } 0\le\xi\le\xi_1\quad  &\hbox{ and } \quad \alpha<\phi(\xi)<\beta\quad \hbox{ for } \xi_1<\xi<\xi_2,
\\
\phi(\xi)=\beta\ \hbox{ for } \xi_2\le\xi\le\xi_3 \quad  &\hbox{ and } \quad \phi(\xi)>\beta\quad \hbox{ for } \xi>\xi_3,
\end{array}
\end{equation}
see Figure \ref{f:TWW}. A result analogous to Lemma \ref{l:lemma1} can be proved and then, for simplicity, we focus on the case
\begin{equation}\label{e:xi0xi2=0}
\xi_0=\xi_1=0, \quad \xi_2=\xi_3.
\end{equation}
For brevity, in the next statement we collect only the most important facts about case (D2); other results as in Theorem \ref{t:main} follow in a direct way. The proof is omitted.

%%%%%%%%%%%%%%%%%%%%%%%% Figure TWW
\begin{figure}[htbp]
\begin{picture}(100,125)(80,-15)
\setlength{\unitlength}{1pt}

\put(270,0){
\put(0,0){\vector(1,0){190}}
\put(0,0){\line(-1,0){130}}
\put(190,8){\makebox(0,0){$\xi$}}
\put(0,0){\vector(0,1){100}}
\put(-3,100){\makebox(0,0)[r]{$\phi$}}

\put(-2,8){\line(1,0){4}}
\put(-2,8){\makebox(0,0)[r]{$\ell^-$}}
\put(-130,8){\line(1,0){80}}

\put(-2,92){\line(1,0){4}}
\put(-2,90){\makebox(0,0)[r]{$\ell^+$}}
\put(190,92){\line(-1,0){80}}

\put(-2,33){\makebox(0,0)[rb]{$\alpha$}}
\put(-2,60){\line(1,0){4}}
\put(-2,56){\makebox(0,0)[rb]{$\beta$}}

\put(0,0){\thicklines{\qbezier(-130,10)(-30,10)(0,30)}}
\put(0,0){\thicklines{\qbezier(0,30)(30,30)(30,30)}} %line
\put(0,0){\thicklines{\qbezier(30,30)(40,50)(60,60)}}
\put(0,0){\thicklines{\qbezier(60,60)(100,60)(100,60)}} %line
\put(0,0){\thicklines{\qbezier(100,60)(110,90)(190,90)}}

\put(0,-3){\makebox(0,0)[t]{$\xi_0=0$}}

\multiput(29,0)(0,5){7}{$.$}
\put(30,-3){\makebox(0,0)[t]{$\xi_1$}}

\multiput(59,0)(0,5){12}{$.$}
\put(60,-3){\makebox(0,0)[t]{$\xi_2$}}

\multiput(99,0)(0,5){12}{$.$}
\put(100,-3){\makebox(0,0)[t]{$\xi_3$}}

}

\end{picture}
\caption{\label{f:TWW}{Under (D2), a profile $\phi$ in the case $0<\xi_1<\xi_2<\xi_3$.}}
\end{figure}
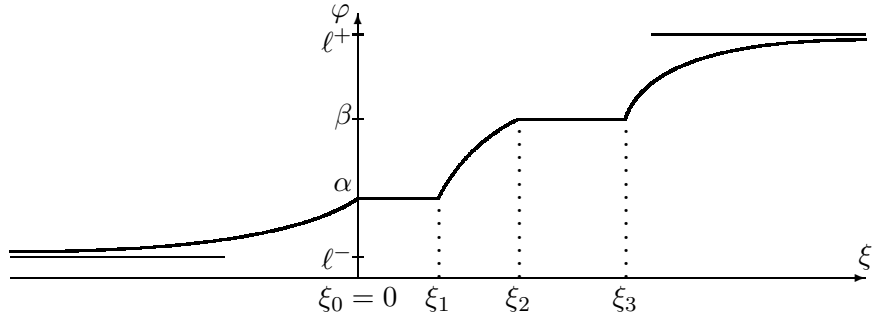
%%%%%%%%%%%%%%%%%%%%%%%%%%%%% End Figure TWW

\begin{theorem}\label{t:main2} \ Assume \emph{(f)} and \emph{(D2)}; let $\ell^- \in [0, \alpha)$ and $\ell^+ \in (\beta, 1]$. Equation \eqref{e:E} admits a wavefront solution whose profile $\phi$ satisfies \eqref{e:infty} if and only if
\begin{equation}\label{e:existence wfs2}
\frac{f(\alpha)-f(\ell^-)}{\alpha-\ell^-}=\frac{f(\beta)-f(\alpha)}{\beta-\alpha}= \frac{f(\ell^+)-f(\beta)}{\ell^+-\beta}=:c_{\ell^{\pm}},
\end{equation}
and
\begin{equation}\label{e:existence about f2}
\begin{array}{rll}
&f(\rho)>\ds\frac{f(\alpha)-f(\ell^-)}{\alpha-\ell^-}(\rho-\alpha)+f(\alpha), \quad &\rho \in (\ell^-, \alpha),
\\[3mm]
&f(\rho)<\ds\frac{f(\beta)-f(\alpha)}{\beta-\alpha}(\rho-\beta)+f(\beta), \quad &\rho \in (\alpha, \beta),
\\[3mm]
&f(\rho)>\ds\frac{f(\ell^+)-f(\beta)}{\ell^+-\beta}(\rho-\ell^+)+f(\ell^+), \quad &\rho \in (\beta, \ell^+).
\end{array}
\end{equation}
Assume \eqref{e:xi0xi2=0}. Then the profile is unique;
%Moreover, $\phi$ is sharp in $\ell^-$ if and only if $\ell^-=0=D(0)$; $\phi$ is sharp in $\ell^+$ if and only if $1-\ell^+=0=D(1)$.
moreover, $\phi^{\, \prime}(\xi)>0$ when $\ell^-<\phi(\xi)<\ell^+$, $\xi\ne 0$, $\xi\ne\xi_2$.
\end{theorem}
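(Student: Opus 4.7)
The plan is to reduce Theorem \ref{t:main2} to three applications of Theorem \ref{t:main}, one per subinterval on which $D$ has a constant sign. Under the normalization \eqref{e:xi0xi2=0}, any wavefront $\phi$ satisfying \eqref{e:infty} decomposes canonically into three monotone pieces: on $(-\infty,0]$ it connects $\ell^-$ to $\alpha$ in the region $D>0$; on $[0,\xi_2]$ it connects $\alpha$ to $\beta$ in the region $D<0$; and on $[\xi_2,+\infty)$ it connects $\beta$ to $\ell^+$ in the region $D>0$. Conversely, three compatible pieces of this kind can be spliced together into a global weak profile. Both directions of the theorem therefore reduce to treating each piece separately and then to gluing.

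For the first and third pieces, since $D>0$ on the relevant open interval, Theorem \ref{t:main} applies directly with endstates $(\ell^-,\alpha)$ and $(\beta,\ell^+)$ respectively; it yields the outer two slope equalities in \eqref{e:existence wfs2}, the first and third inequalities of \eqref{e:existence about f2}, strict monotonicity, and the local classicality and regularity of $\phi$ on those intervals. For the middle piece I would invoke the duality of Remark \ref{r:ac}: $\phi$ is a wavefront of \eqref{e:E} with speed $c$ on $[0,\xi_2]$ if and only if it is a wavefront of \eqref{e:rtR} with diffusivity $E=-D$ (positive on $(\alpha,\beta)$), flux $g=-f$, and speed $-c$. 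Theorem \ref{t:main} applied to this dual problem then produces the middle slope equality in \eqref{e:existence wfs2} and, after undoing the sign change $g=-f$, the middle inequality of \eqref{e:existence about f2}; strict monotonicity on $(0,\xi_2)$ follows as well.

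Matching the three local speeds to a common value $c_{\ell^{\pm}}$ is exactly the content of \eqref{e:existence wfs2}, and the shifts are fixed by the requirements $\phi(0)=\alpha$ and $\phi(\xi_2)=\beta$, giving uniqueness of $\phi$ on all of $\R$ under \eqref{e:xi0xi2=0}. To verify that the glued $\phi$ satisfies \eqref{e:def-tw}, I would work with the first integral of \eqref{e:ODE}, namely $D(\phi)\phi^{\, \prime}=f(\phi)-c_{\ell^{\pm}}\phi+k$; because of \eqref{e:existence wfs2} the same constant $k$ works on all three pieces, so $D(\phi)\phi^{\, \prime}$ extends continuously across the junctions $\xi=0$ and $\xi=\xi_2$, with value $0$ at both. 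Hence $D(\phi)\phi^{\, \prime}\in L^1_{\rm loc}(\R)$ and integration by parts of \eqref{e:ODE} against any $\psi\in C_0^{\infty}(\R)$ produces no extra boundary contributions. The only delicate point I anticipate is precisely this weak-formulation check at the junctions, where $D(\phi)=0$ and $\phi$ need not be $C^1$; the resolution, as just outlined, is to work with the product $D(\phi)\phi^{\, \prime}$ rather than $\phi^{\, \prime}$ itself, and to control the left/right slopes through the analogues of \eqref{e:slopeLR} and \eqref{e:00} applied at both $\alpha$ and $\beta$, with signs fixed by $D^{\, \prime}(\alpha)\le 0$ and $D^{\, \prime}(\beta)\ge 0$ implied by (D2).
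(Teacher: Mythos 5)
Your decomposition---split the profile at the two zeros of $D$ into three monotone pieces on constant-sign intervals, derive the chord conditions and the common speed piecewise, and glue via the first integral so that $D(\phi)\phi^{\,\prime}$ vanishes at the junctions---is exactly the route the paper intends: the authors omit the proof of Theorem \ref{t:main2}, stating that it follows directly by the method of Theorem \ref{t:main}, whose proof is precisely this piecewise construction. One correction is needed: the result to invoke on each constant-sign piece is Theorem \ref{t:wfsConst} (cases \emph{(a1)}, \emph{(b2)}, \emph{(a1)} for the three pieces), not Theorem \ref{t:main}; the latter assumes (D1) with $\ell^-<\alpha<\ell^+$ strictly and a sign change of $D$ inside the interval, so it does not apply to the endstates $(\ell^-,\alpha)$ or $(\beta,\ell^+)$, nor to the dual problem on $(\alpha,\beta)$, where the (dual) diffusivity has constant sign. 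With that substitution---which also supplies the limits $\lim D(\phi)\phi^{\,\prime}=0$ at the junctions via items \emph{(i)}--\emph{(ii)} of Theorem \ref{t:wfsConst}, and the necessity direction via the truncation argument of Lemma \ref{l:lemma1}---your argument coincides with the paper's.
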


We refer to Figure \ref{f:f2}{\em (b)} for a geometrical interpretation of conditions \eqref{e:existence wfs2} and \eqref{e:existence about f2}.

\smallskip

At last, we consider the family of equations
\begin{equation}\label{e:Eepsilon}
\rho_t + f(\rho)_x=\left(\epsilon D(\rho)\rho_x\right)_x, \qquad t\ge 0, \, x\in \R,
\end{equation}
depending on the parameter $\epsilon \in (0,1]$; we are interested in the limit as $\epsilon \to 0^+$ of the wavefronts profiles $\phi_{\epsilon}$. This subject falls in the much more general issue of the convergence of solutions $\rho_\eps$ of \eqref{e:Eepsilon} to a solution $\rho$ of the conservation law \eqref{e:cl}. If $D>0$, a positive answer is provided in \cite{Kruzhkov}; we refer to \cite[\S 6]{Dafermos} for more information. The case $D\ge0$ was first considered in \cite{Volpert-Hudjaev}; see \cite{CCR} for a short proof in the presence of source terms and updated references. To the best of our knowledge, no analogous information  is known when $D$ changes sign. We provide now a convergence result concerning wavefronts.

About \eqref{e:Eepsilon}, we assume conditions (f), (D1), \eqref{e:existence wfs}, \eqref{e:existence about f}; moreover, for sake of simplicity, we also suppose
\begin{align}\label{e:Dslope}
D^{\, \prime}(\alpha)<0.
% \quad \hbox{ and }\quad f^{\, \prime}(\alpha)-c_{\ell_{\pm}}<0.
\end{align}
By Theorem \ref{t:main}, for every  $\ell^- \in [0, \alpha)$, $\ell^+\in (\alpha, 1]$ and $\epsilon \in (0,1]$, there exist unique profiles $\phi_{\epsilon}(\xi)$ of wavefronts of \eqref{e:Eepsilon} satisfying \eqref{e:xi0xi1=0}. All of them have the same speed $c_{\ell^\pm}$.

\begin{theorem}\label{t:conv left}
We assume \emph{(f)}, \emph{(D1)} and consider $\ell^- \in [0, \alpha)$, $\ell^+\in (\alpha, 1]$; we also assume \eqref{e:existence wfs}, \eqref{e:existence about f} as well as \eqref{e:Dslope}. Let $\phi_\eps$ be the unique profiles of wavefront solutions to \eqref{e:Eepsilon} satisfying \eqref{e:infty} and \eqref{e:xi0xi1=0}. Then
\begin{equation}\label{e:conv left}
\lim_{\eps\to0+}\phi_\eps(\xi)=: \phi_0(\xi)=\left\{
\begin{array}{ll}
\ell^- \hbox{ if }\xi<0,
\\[2mm]
\ell^+ \hbox{ if }\xi>0.
\end{array}
\right.
\end{equation}
The convergence is uniform in every interval $(-\infty, -\delta)$ and $(\delta, +\infty)$ with $\delta>0$.
\end{theorem}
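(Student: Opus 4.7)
\smallskip
\noindent\emph{Proof proposal.} The plan is to reduce the $\epsilon$-family to the case $\epsilon=1$ by a simple rescaling of the independent variable, and then read off the convergence from the already-known boundary behaviour of $\phi_{1}$.

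\smallskip

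First I would extract a first-order equation for $\phi_{\epsilon}$. The profile equation for \eqref{e:Eepsilon} is $(\epsilon D(\phi_{\epsilon})\phi_{\epsilon}')' + (c_{\ell^{\pm}}\phi_{\epsilon}-f(\phi_{\epsilon}))'=0$, and by Remark~\ref{r:ac} (applied on both sides of the singular point $\xi=0$ fixed by \eqref{e:xi0xi1=0}) the profile is classical on $\R\setminus\{0\}$. Integrating once and using $\phi_{\epsilon}(-\infty)=\ell^{-}$, together with the fact that $\phi_{\epsilon}$ is monotone and bounded so that $\epsilon D(\phi_{\epsilon})\phi_{\epsilon}'\to 0$ at $-\infty$, yields
\begin{equation*}
\epsilon\, D(\phi_{\epsilon})\,\phi_{\epsilon}' \;=\; F(\phi_{\epsilon}), \qquad F(\rho):=f(\rho)-c_{\ell^{\pm}}\rho-\bigl[f(\ell^{-})-c_{\ell^{\pm}}\ell^{-}\bigr],
\end{equation*}
valid on $\R\setminus\{0\}$. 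Condition \eqref{e:existence wfs} gives $F(\ell^{-})=F(\alpha)=F(\ell^{+})=0$, and \eqref{e:existence about f} together with (D1) ensures that $F/D>0$ on $(\ell^{-},\alpha)\cup(\alpha,\ell^{+})$, consistently with $\phi_{\epsilon}'>0$.

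\smallskip

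Next I would perform the rescaling $\eta:=\xi/\epsilon$ and set $\psi(\eta):=\phi_{\epsilon}(\epsilon\eta)$. Substituting in the first-order equation above, the factor $\epsilon$ cancels and $\psi$ satisfies $D(\psi)\psi'(\eta)=F(\psi)$ on $\R\setminus\{0\}$, i.e.\ the very same equation with $\epsilon=1$. Moreover $\psi$ is monotone, satisfies $\psi(\pm\infty)=\ell^{\pm}$ and inherits the normalization \eqref{e:xi0xi1=0} from $\phi_{\epsilon}$, so $\psi$ is a wavefront profile for \eqref{e:Eepsilon} with $\epsilon=1$ satisfying the hypotheses of the uniqueness part of Theorem~\ref{t:main}. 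Hence $\psi\equiv\phi_{1}$, which gives the pointwise identity
\begin{equation*}
\phi_{\epsilon}(\xi)=\phi_{1}\!\left(\tfrac{\xi}{\epsilon}\right), \qquad \xi\in\R,\ \epsilon\in(0,1].
\end{equation*}

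\smallskip

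From this identity the convergence is immediate. For any fixed $\xi<0$ one has $\xi/\epsilon\to-\infty$ as $\epsilon\to 0^{+}$, so $\phi_{\epsilon}(\xi)=\phi_{1}(\xi/\epsilon)\to\ell^{-}$ by \eqref{e:infty} applied to $\phi_{1}$; symmetrically for $\xi>0$. For the uniform statement on $(-\infty,-\delta)$, monotonicity of $\phi_{\epsilon}$ gives
\begin{equation*}
0\le \sup_{\xi\le -\delta}\bigl(\phi_{\epsilon}(\xi)-\ell^{-}\bigr)=\phi_{\epsilon}(-\delta)-\ell^{-}=\phi_{1}\!\left(-\tfrac{\delta}{\epsilon}\right)-\ell^{-}\;\longrightarrow\;0,
\end{equation*}
and analogously on $(\delta,+\infty)$. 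The only potential obstacle is the invocation of uniqueness: one must verify that the rescaled $\psi$ lands in the uniqueness class of Theorem~\ref{t:main}, i.e.\ that the normalization \eqref{e:xi0xi1=0} is preserved under the rescaling $\xi\mapsto\epsilon\eta$, which it is since the set $\{\xi:\phi_{\epsilon}(\xi)=\alpha\}$ is just scaled by $1/\epsilon$ and still reduces to the single point $0$. The assumption \eqref{e:Dslope} is what guarantees, via Theorem~\ref{t:main}, that this uniqueness applies; beyond that it plays no further role in the argument.
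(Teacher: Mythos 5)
Your argument is correct, but it is genuinely different from the paper's. The paper first proves a comparison lemma (Lemma \ref{l:forTHCL}) showing that the family $\{\phi_\eps\}$ is monotone in $\eps$ on $(-\infty,0]$ — an argument that uses \eqref{e:Dslope} together with \eqref{e:slopeLR} in the non-sonic case and a rather delicate approximation by shifted profiles in the sonic case $f'(\alpha)=c_{\ell^\pm}$ — and then identifies the monotone limit $\phi_0$ by contradiction, using \eqref{e:existence about f} to produce a uniform lower bound $\phi_\eps'\ge \sigma/(\eps\max D)$ that would force $\phi_\eps$ past $\alpha$ before $\xi=0$. You instead exploit the exact self-similarity of the problem: since the first-order equation \eqref{e:deriv} is autonomous and $\eps$ enters only as a multiplicative factor on $D$, the rescaled function $\psi(\eta)=\phi_\eps(\eps\eta)$ solves the $\eps=1$ problem, satisfies \eqref{e:infty} and the normalization \eqref{e:xi0xi1=0} (the zero set of $\phi_\eps-\alpha$ is merely dilated), and the change of variables $\xi=\eps\eta$ in \eqref{e:def-tw} shows it is a genuine weak solution across the singular point $\xi=0$; uniqueness in Theorem \ref{t:main} then yields the identity $\phi_\eps(\xi)=\phi_1(\xi/\eps)$, from which both the pointwise and the uniform statements follow at once. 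Your route is shorter, yields an explicit convergence rate governed by the decay of $\phi_1$ at $\pm\infty$, and — as you half-observe, though the uniqueness in Theorem \ref{t:main} does not in fact require \eqref{e:Dslope} — dispenses with hypothesis \eqref{e:Dslope} altogether, so it actually proves a slightly stronger statement. What the paper's longer route buys is the monotone ordering of the profiles in $\eps$, which is of independent interest and, unlike the scaling identity, would survive perturbations of the diffusivity that are not pure multiples of a fixed $D$. The only presentational point to tighten is the derivation of the first-order equation: rather than asserting that monotonicity and boundedness force $\eps D(\phi_\eps)\phi_\eps'\to0$ at $-\infty$, simply cite \eqref{e:deriv} (equivalently \eqref{e:phid}), which the paper establishes via Theorem \ref{t:wfsConst}\emph{(i)}--\emph{(iii)}.
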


\smallskip

The results of Theorem \ref{t:main} can be easily reformulated to cover the case $\xi_1>0$; in particular formula \eqref{e:slopeLR} corresponds to the limit for $\xi\to0-$, while a completely analogous result holds for the limit $\xi\to\xi_1+$. Assume $D'(\alpha)<0$; if $\xi_1=0$ then $\phi$ is of class $C^1$ at $\xi=0$, by \eqref{e:slopeLR}, while if $\xi_1>0$ this does not hold unless $f'(\alpha)=c_{\ell^\pm}$. Analogous remarks apply to Theorem \ref{t:main2} in the case $\xi_1>0$ and $\xi_2<\xi_3$. About Theorem \ref{t:conv left}, consider again a family $\phi_\eps$ of profiles for \eqref{e:Eepsilon} but suppose that there exists $\xi_1>0$ that does not depend on $\eps$ such that $\alpha=\phi_\eps(\xi_1)<\phi_\eps(\xi)$ for $\xi>\xi_1$. Arguing as in the proof of Theorem \ref{t:conv left} we deduce that
\begin{equation}\label{e:limalpha}
\lim_{\eps\to0+}\phi_\eps(\xi)=:\phi_{0,\xi_1}(\xi)=
\left\{
\begin{array}{ll}
\ell^-&\hbox{ if }\xi\in(-\infty,0),
\\
\alpha&\hbox{ if }\xi\in(0,\xi_1),
\\
\ell^+& \hbox{ if }\xi\in(\xi_1,\infty).
\end{array}
\right.
\end{equation}

\smallskip

Theorem \ref{t:conv left} allows us to rigorously comment on the above results from the hyperbolic point of view $D=0$; we are concerned with equation \eqref{e:cl}. First, we remark that the function $\rho_0(x,t)=\phi_0(x-c_{\ell^\pm}t)$ is a weak (distributional) solution of equation \eqref{e:E} because the Rankine-Hugoniot conditions are satisfied.

Conditions analogous to \eqref{e:existence about f} are well known in the hyperbolic setting \cite[Thm. 4.4]{Bressan}. In particular the discontinuous solution $\rho_0$ is not entropic; it is so the analogous solution joining $\ell^-$ with $\alpha$. Referring for instance to the case depicted in Figure \ref{f:f}{\em (b)}, the Lax inequality $f'(\ell^-)>c_{\ell^\pm}$ is satisfied while the inequality $c_{\ell^\pm}>f'(\ell^+)$ fails: the shock is compressive on the left and undercompressive on the right. However, even if $\rho_0$ is not entropic, Theorem \ref{t:main} shows that it has a viscous profile, where the term \lq\lq viscous\rq\rq\ refers to a negative diffusivity in the nonentropic part of the solution; of course, such a wave is unstable in the sense of  \cite[Rem. 4.7]{Bressan}. Different scenarios are also possible: for instance, the one-sided sonic case $c_{\ell^\pm}=f'(\ell^+)\ne f'(\ell^-)$ (or $c_{\ell^\pm}=f'(\ell^-)\ne f'(\ell^+)$) or even the doubly sonic case $c_{\ell^\pm}=f'(\ell^\pm)$, see Figure \ref{f:ds}. The former case has been considered in \cite{Colombo-Rosini2005} (see cases (R1) and (R3){\em (a)} there) in the framework of nonclassical shocks \cite{LeFloch}. However, generically, there is a sheaf of lines through $\left(\alpha,f(\alpha)\right)$ that intersect the graph of $f$ in two further points and then a one-parameter family of end states $\left(\ell^-,\ell^+\right)$ for which \eqref{e:existence about f} is satisfied.

In the case $\xi_1>0$, the function $\rho_{0,\xi_1}(x,t)=\phi_{0,\xi_1}(x-c_{\ell^\pm}t)$ is a weak solution of the conservation law \eqref{e:cl} because its jumps satisfy the Rankine-Hugoniot conditions. However, the entropy condition fails, as discussed above.

%%%%%%%%%%%%%%%%%%%%%%%% Figure ds
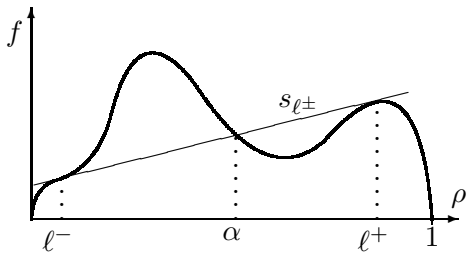
\begin{figure}[htbp]
\begin{picture}(100,100)(80,-15)
\setlength{\unitlength}{1pt}

\put(220,0){
\put(0,0){\vector(1,0){160}}
\put(160,8){\makebox(0,0){$\rho$}}
\put(0,0){\vector(0,1){80}}
\put(-3,70){\makebox(0,0)[r]{$f$}}

\put(150,-3){\makebox(0,0)[t]{$1$}}
\put(150,-2){\line(0,1){4}}

\put(0,0){\thicklines{\qbezier(0,0)(1,13)(10,15)}}
\put(0,0){\thicklines{\qbezier(10,15)(25,20)(30,40)}}
\put(0,0){\thicklines{\qbezier(30,40)(40,80)(62,50)}}
\put(0,0){\thicklines{\qbezier(62,50)(88,10)(110,30)}}
\put(0,0){\thicklines{\qbezier(110,30)(145,70)(150,0)}}

\put(1,13){\line(4,1){140}}

\multiput(75,0)(0,5){7}{$.$}
\put(75,-3){\makebox(0,0)[t]{$\alpha$}}

\put(100,40){\makebox(0,0)[b]{$s_{\ell^\pm}$}}

\multiput(10,0)(0,5){3}{$.$}
\put(10,-3){\makebox(0,0)[t]{$\ell^-$}}

\multiput(128,0)(0,5){9}{$.$}
\put(128,-3){\makebox(0,0)[t]{$\ell^+$}}

%\put(80,-20){\makebox(0,0)[t]{{\em (b)}}}
}

\end{picture}
\caption{\label{f:ds}{A flux function $f$ in the doubly sonic case.}}
\end{figure}
%%%%%%%%%%%%%%%%%%%%%%%%%%%%% End Figure ds

About Theorem \ref{t:main2}, consider for instance the case depicted in Figure \ref{f:f2}{\em (b)}. The corresponding discontinuous solution $\rho$ joining $\ell^-$ on the left to $\ell^+$ on the right with a jump propagating with velocity $c_{\ell^\pm}$ is still nonentropic. Both Lax inequalities $f'(\ell^-)>c_{\ell^\pm}>f'(\ell^+)$ are now satisfied; this does not imply that the solution is entropic because the flux is not convex \cite[Remark 4.7]{Bressan}. Also shock waves connecting the states $\ell^-$ and $\ell^+$ as in Figure \ref{f:f2}{\em (b)} have been considered in \cite{Colombo-Rosini2005} (see case (R3){\em (b)} there). Notice that in the framework of Theorem \ref{t:main2} we generically have a unique pair of end states $\ell^\pm$.

In \cite{Colombo-Rosini2005} the (hyperbolic) zone of \lq\lq panic\rq\rq\ is modeled by the interval $(\gamma,1]$, where $\gamma$ is a local minimum of $f$, see Figure \ref{f:f2}{\em (b)}; the zone of aggregation (where $D<0$) is $(\alpha,\beta)$. The balance between panic and aggregation can be explained by our diffusive model as follows. When the density is in the interval $(\alpha,\gamma)$, panic has not yet emerged but the crowd shows an aggregative behavior to face the perceived danger. This behavior persists even after the threshold $\gamma$ is trespassed if the density is not exceeding, namely, in the interval $(\gamma,\beta)$. Values of $\rho\in(\beta,1)$ are unbearable and push the crowd to (slightly) diffuse again.

%%%%%%%%%%%

\section{Applications to collective movements}\label{sec:ex}
\setcounter{equation}{0}
In this section we provide some examples concerning the modeling of vehicular traffic flows or crowds dynamics. In these cases assumption (f) specializes to \cite{Lighthill-Whitham, Richards}
\begin{itemize}
\item[{(fcm)}]
$f(\rho) = \rho v(\rho)$, with $v\in C^1[0,1]$, $v(\rho)\ge0$ for $\rho\in[0,1)$ and $v(1)=0$.
\end{itemize}
Here $v$ represents the velocity; from a modeling point of view, in the interval $[0,1)$ it may vanishes once \cite{Colombo-Rosini2005} and is decreasing at least in a right neighborhood of $0$. The main issue regards the choice of $D$; the properties $D(0)=D(1)=0$ would be desirable \cite{Bellomo-Delitala-Coscia, BTTV}. We now list the main models of $D$ occurring in the literature.

In \cite{Nelson_2000} the author proposed the expression
\begin{equation}\label{e:DN}
D(\rho)= -\delta \rho v'(\rho) -\tau\rho^2v'(\rho)^2 = -\rho v'(\rho)\left(\delta+\tau\rho v'(\rho)\right),
\end{equation}
where $\delta>0$ is an anticipation distance and $\tau>0$ a relaxation time; see also \cite{Herty-Illner} for an analogous introduction of these parameters in a kinetic framework. The ratio $\delta/\tau = :v^s$, which occurs several times in the following, represents the velocity needed to cover the distance $\delta$ in the time $\tau$; it can be understood as a {\em safety velocity}. Under this notation \eqref{e:DN} can be written as
\begin{equation}\label{e:DNvs}
D(\rho)= -\tau\rho v'(\rho)\left(v^s + \rho v'(\rho)\right).
\end{equation}
If $\ov=\max_{\rho\in [0,1]}v(\rho)$, then a natural requirement is
\begin{equation}\label{e:vvs}
\ov\le v^s.
\end{equation}

In the case of crowds dynamics, the parameter $\tau$ is very small and may be dropped \cite{BTTV}; this leads to
\begin{equation}\label{e:DT}
D(\rho)= -\delta \rho v'(\rho).
\end{equation}
The case when $\delta$ in \eqref{e:DN} depends on $\rho$ is also proposed in \cite{Nelson_2000}, where $\delta(\rho) = h v^2(\rho)$ for $h>0$. If it is so, \eqref{e:DN} becomes
\begin{equation}\label{e:Dgamma}
D(\rho) = -\rho v'(\rho)\left(h v^2(\rho) +\tau\rho v'(\rho)\right).
\end{equation}
In the case of pedestrian flows, instead of \eqref{e:Dgamma} one may consider \cite[Figure 4]{BTTV}
\begin{equation}\label{e:Dgammap}
D(\rho) = -\rho v'(\rho)\left(h v(\rho) +\tau\rho v'(\rho)\right).
\end{equation}
For simplicity, in \eqref{e:Dgamma} and \eqref{e:Dgammap} we have taken $\tau\ge0$ to be independent from $\rho$. %Similar models of $D$ can be found in \cite{Bellomo-Delitala-Coscia, Bellomo-Dogbe}.

We also mention that several different models for $D$ follow by a kinetic approximation \cite{Herty-Puppo-Roncoroni-Visconti}. In that paper the authors motivate the occurrence of stop and go waves precisely by the presence of zones with negative diffusivities. For instance, in the case of a kinetic model with two microscopic velocities $0\le\xi_1<\xi_2$, one deduces from \cite{Herty-Puppo-Roncoroni-Visconti} that $D(\rho) = \tau\left((\xi_1+\xi_2)q'(\rho)+\xi_1\xi_2-[q'(\rho)]^2\right)$, where $q(\rho)=\rho v(\rho)$ is the hyperbolic flow and $\tau>0$ a reaction time. In our model $v$ ranges from $0$ to $\ov$ and then we take $\xi_1=0$, $\xi_2=\ov$; we deduce
\begin{equation}\label{e:Dkin}
D(\rho) = \tau\left(\rho v'(\rho)+v(\rho)\right)\left(\ov-\rho v'(\rho)-v(\rho)\right),
\end{equation}
which must be compared with \eqref{e:DNvs}. For simplicity, we limit the examples below to the simpler diffusivities \eqref{e:DN}, \eqref{e:DT}, \eqref{e:Dgamma}, \eqref{e:Dgammap}.

\subsection{The case (D1)}\label{sub_D1}
In this subsection we investigate when assumptions (fcm), (D1), \eqref{e:existence about f} are satisfied according to the choice of $v$, when $D$ is chosen as in \eqref{e:DN}--\eqref{e:Dgammap}. We begin with some {\em negative} results; some proofs are deferred to Section \ref{s:proof}.

\begin{lemma}\label{l:noD1}
If $D(\rho)=-a(\rho)v'(\rho)$ with $a>0$ in $(0,1)\setminus\{\alpha\}$, then assumptions {\rm (fcm)} and {\rm (D1)} cannot hold together.
\end{lemma}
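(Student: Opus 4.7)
The plan is to argue by extracting the sign of $v'$ from the prescribed sign of $D$ and then deriving a contradiction with the boundary data in (fcm).

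First I would read off, from $D(\rho) = -a(\rho)v'(\rho)$ with $a>0$ on $(0,1)\setminus\{\alpha\}$, that condition (D1) is equivalent to $v'(\rho)<0$ on $(0,\alpha)$ and $v'(\rho)>0$ on $(\alpha,1)$. In particular $v$ is strictly increasing on the interval $(\alpha,1)$. Combining this with the continuity of $v$ at $\rho=1$ and the endpoint condition $v(1)=0$ from (fcm), monotonicity forces $v(\rho)<v(1)=0$ for every $\rho\in(\alpha,1)$.

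This directly contradicts the requirement in (fcm) that $v(\rho)\ge 0$ on $[0,1)$, since $(\alpha,1)\subset[0,1)$. Hence (fcm) and (D1) cannot coexist for any diffusivity of the given form.

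The argument is essentially a one-line sign chase, so there is no real obstacle; the only point worth flagging is that the strict positivity of $a$ off $\alpha$ is what allows us to transfer the strict sign of $D$ to a strict sign of $v'$, which in turn is what rules out the possibility of a constant-valued $v$ on $(\alpha,1)$ that could otherwise reconcile $v(1)=0$ with $v\ge 0$.
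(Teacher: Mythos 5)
Your argument is correct and coincides with the paper's own proof: both read off the sign of $v'$ from (D1) via the factorization $D=-a\,v'$, note that $v$ increases on $(\alpha,1)$, and use $v(1)=0$ to contradict the nonnegativity of $v$ required by (fcm). The only cosmetic difference is that the paper phrases the contradiction as $v(\alpha)<0$ while you phrase it as $v<0$ on $(\alpha,1)$.
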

\begin{proof}
By (D1) we have $v'(\rho)<0$ if $\rho\in(0,\alpha)$ and $v'(\rho)>0$ if $\rho\in(\alpha,1)$. Since $v$ increases in $(\alpha,1)$, by $v(1)=0$ we deduce $v(\alpha)<0$, which contradicts the positivity of $v$.
\end{proof}

Then neither \eqref{e:DT} nor \eqref{e:Dgamma} with $\tau=0$ satisfy both (fcm) and (D1). We consider \eqref{e:DN}.

\begin{lemma}\label{l:D1no}
Assume $D$ is given by \eqref{e:DN}. If $f$ and $D$ satisfy {\rm (fcm)} and {\rm (D1)}, respectively, then $v$ must be decreasing. The simple expressions
\begin{equation}\label{e:vW}
v(\rho)=\frac{\delta}{\alpha\tau}(1-\rho),\quad
%\label{e:v2}
v(\rho)%= \frac{k}{2\tau}\left(\rho^2-2(\alpha+1)\rho+2\alpha+1\right)
= \frac{\delta}{2\alpha\tau}(1-\rho)\left(1+2\alpha-\rho\right).
\end{equation}
imply both {\rm (fcm)} and {\rm (D1)}. Conditions \eqref{e:existence about f} and \eqref{e:vvs} fail in both cases.
\end{lemma}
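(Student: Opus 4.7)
The factorization $D(\rho)=-\rho v'(\rho)\bigl(\delta+\tau\rho v'(\rho)\bigr)$ is the key tool throughout. For the first assertion (monotonicity of $v$), my plan is to determine the sign of $v'$ on the two intervals $(0,\alpha)$ and $(\alpha,1)$ separately. On $(0,\alpha)$, assumption (D1) forces $D>0$, so $v'$ cannot vanish (else $D$ would vanish too), and $v'(\rho_0)>0$ at some $\rho_0\in(0,\alpha)$ would make the bracket $\delta+\tau\rho_0 v'(\rho_0)$ strictly positive, giving $D(\rho_0)<0$, a contradiction. On $(\alpha,1)$, by the same reasoning $v'$ cannot vanish, so by continuity it has constant sign; the option $v'>0$ is excluded because then $v$ would be strictly increasing on $(\alpha,1)$, and combined with $v(1)=0$ from (fcm) this would give $v(\alpha)<0$, violating $v\ge 0$. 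Hence $v'<0$ throughout $(0,1)$.

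For the two explicit velocities in \eqref{e:vW}, I would simply substitute into the factored form of $D$. In the linear case $v'\equiv -\delta/(\alpha\tau)$ gives $D(\rho)=\frac{\delta^2}{\alpha^2\tau}\rho(\alpha-\rho)$; in the quadratic case $v'(\rho)=-\frac{\delta}{\alpha\tau}(1+\alpha-\rho)$, and using the identity $\alpha-\rho(1+\alpha-\rho)=(\alpha-\rho)(1-\rho)$ one obtains $D(\rho)=\frac{\delta^2}{\alpha^2\tau}\rho(1+\alpha-\rho)(\alpha-\rho)(1-\rho)$. The sign structure of these factored expressions makes (D1) obvious on $[0,1]$, while (fcm) is immediate from $v\ge 0$ on $[0,1]$ and $v(1)=0$.

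To refute \eqref{e:existence about f}, I would analyze the local convexity of $f$ near $\alpha$. In the linear case $f(\rho)=\frac{\delta}{\alpha\tau}\rho(1-\rho)$ is a strictly concave parabola, so three distinct points on its graph are never collinear; already the matching-slope condition \eqref{e:existence wfs}, which underlies \eqref{e:existence about f}, fails for every $\ell^-<\alpha<\ell^+$. In the quadratic case, writing $f=\frac{\delta}{2\alpha\tau}g$ with $g(\rho)=\rho(1-\rho)(1+2\alpha-\rho)$, the second derivative $g''(\rho)=6\rho-4(1+\alpha)$ vanishes only at $\rho=\tfrac{2}{3}(1+\alpha)>\alpha$, so $f$ is strictly concave in a right neighborhood of $\alpha$; hence $f$ lies strictly above any chord issued from $\bigl(\alpha,f(\alpha)\bigr)$ there, contradicting the second inequality of \eqref{e:existence about f}. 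Finally \eqref{e:vvs} fails trivially since $v$ is decreasing and so $\ov=v(0)$ equals $\delta/(\alpha\tau)$ or $\delta(1+2\alpha)/(2\alpha\tau)$ respectively, both strictly larger than $v^s=\delta/\tau$ because $\alpha\in(0,1)$.

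The only mildly subtle step is the monotonicity argument on $(\alpha,1)$, where one must combine the non-vanishing of $v'$ (derived from $D<0$) with the endpoint condition $v(1)=0$ to rule out $v'>0$; everything else reduces to direct substitution and elementary sign analysis.
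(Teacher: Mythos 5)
Your argument for the monotonicity of $v$, your verification of (fcm) and (D1) for both velocities in \eqref{e:vW} (the paper instead \emph{derives} these expressions from an ansatz on $w(\rho)=\delta+\tau\rho v'(\rho)$, but verification is all the statement requires), and your treatment of \eqref{e:vvs} are all correct; the sign analysis of $v'$ via the factored form of $D$ is in fact tighter than the paper's case count on sign changes of $v'$. The genuine gap is in the cubic case, at the step ``$f$ is strictly concave in a right neighborhood of $\alpha$, \emph{hence} $f$ lies strictly above any chord issued from $(\alpha,f(\alpha))$ there.'' That inference is false in general: if $\ell^+$ lies beyond the inflection point $\tfrac23(1+\alpha)$ (which belongs to $(\alpha,1)$ whenever $\alpha<\tfrac12$), the secant from $(\alpha,f(\alpha))$ to $(\ell^+,f(\ell^+))$ may be steeper than the tangent at $\alpha$, and then $f$ lies \emph{below} the secant near $\alpha$ despite local concavity; compare $f(x)=-x^2+100x^3$ at $\alpha=0$ with $\ell^+=1$, where the secant $y=99x$ dominates $f$ near $0$. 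Local concavity settles only the sub-case $\ell^+\le\tfrac23(1+\alpha)$. (A smaller imprecision: in both cases you refute the collinearity condition \eqref{e:existence wfs} rather than \eqref{e:existence about f} itself, which is what the lemma asserts; for the concave parabola the direct statement is that $f$ lies strictly above the chord on $(\alpha,\ell^+)$, violating \eqref{e:existence about f}$_2$.)

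The conclusion is nevertheless true, but it needs a global input. Letting $\rho\to\alpha^+$ in \eqref{e:existence about f}$_2$ shows that a necessary condition is $f'(\alpha)\le\frac{f(\ell^+)-f(\alpha)}{\ell^+-\alpha}$, i.e.\ $f(\ell^+)\ge T(\ell^+)$ where $T$ is the tangent line to $f$ at $\alpha$. Since $f-T$ is (a positive multiple of) a monic cubic with a double zero at $\alpha$, matching the $\rho^2$ coefficient of $g(\rho)=\rho^3-(2+2\alpha)\rho^2+(1+2\alpha)\rho$ gives
\begin{equation*}
f(\rho)-T(\rho)=\frac{\delta}{2\alpha\tau}\,(\rho-\alpha)^2(\rho-2)<0
\qquad\text{for }\rho\in(\alpha,1],
\end{equation*}
so \eqref{e:existence about f}$_2$ fails for every $\ell^+\in(\alpha,1]$. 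This computation is essentially what lies behind the paper's (terse) claim that any line through $(\alpha,f(\alpha))$ meets the graph of $f$ in at most one further point; either route closes the gap, but some such global argument must be supplied.
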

The two previous lemmas shows that the expressions \eqref{e:DN}, \eqref{e:DT} and \eqref{e:Dgamma} with $\tau=0$ never or difficultly match conditions (fcm), (D1) and \eqref{e:existence about f} for some simple velocity $v$. Then, we focus on the case \eqref{e:Dgamma} with $\tau>0$, where we provide {\em positive} results. In order that $D(1)=0$ holds, we need $v$ vanishes at second order at $\rho=1$; then, we consider
\begin{equation}\label{e:v3}
v(\rho) = \ov(1-\rho)^2,
\end{equation}
for $\ov>0$. We deduce
\begin{equation}\label{e:SuperD}
D(\rho) = 2h\ov^3\rho(1-\rho)^2\left[(1-\rho)^3-\sigma\rho\right], \qquad \sigma :=\frac{2\tau}{h\ov}>0.
\end{equation}

\begin{lemma}\label{l:quadraticv}
Assume $v$ is given by \eqref{e:v3} and consequently $D$ by \eqref{e:Dgamma} with $\tau\ne0$, see \eqref{e:SuperD}. Then $D$ satisfies {\rm (D1)} for any positive $\ov,h,\tau$ and $\alpha=\alpha(\ov,h,\tau)$ satisfies
\begin{equation}\label{e:sigma}
(1-\alpha)^3 = \sigma \alpha.
\end{equation}
Choose $\tau, h, \ov$ such that $\alpha(\ov,h,\tau)\in(\frac12,1)$; then there are infinitely many pairs $(\ell^-,\ell^+)$ such that \eqref{e:existence wfs} and \eqref{e:existence about f} hold.
\end{lemma}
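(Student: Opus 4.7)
The plan is to dispatch the three assertions in turn.

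First, for (D1) and \eqref{e:sigma} I will factor
\[
D(\rho) = 2h\ov^3 \rho(1-\rho)^2\, g(\rho), \qquad g(\rho) := (1-\rho)^3 - \sigma\rho,
\]
and note that on $(0,1)$ the sign of $D$ is the sign of $g$, since $\rho(1-\rho)^2>0$ there. From $g(0)=1$, $g(1)=-\sigma$ and $g'(\rho)=-3(1-\rho)^2-\sigma<0$, the function $g$ is strictly decreasing and has a unique zero $\alpha\in(0,1)$ satisfying \eqref{e:sigma}. Combined with $D\in C^\infty[0,1]$ and $D(0)=D(1)=0$, this gives (D1).

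Next, I will fix $\alpha\in(1/2,1)$ and reformulate the remaining claim as an algebraic problem on the cubic flux $f(\rho)=\ov\rho(1-\rho)^2$. Conditions \eqref{e:existence wfs} and \eqref{e:existence about f} together amount to the existence of a line $s_{\ell^\pm}$ through $(\alpha,f(\alpha))$ meeting the graph of $f$ at two further points $(\ell^-,f(\ell^-))$ and $(\ell^+,f(\ell^+))$ with $\ell^-\in[0,\alpha)$, $\ell^+\in(\alpha,1]$, such that $f-s_{\ell^\pm}$ is positive on $(\ell^-,\alpha)$ and negative on $(\alpha,\ell^+)$. Since $G(\rho):=f(\rho)-s_{\ell^\pm}(\rho)$ is a cubic in $\rho$ with positive leading coefficient $\ov$, and vanishes at the three distinct points $\ell^-<\alpha<\ell^+$, the factorization $G(\rho)=\ov(\rho-\ell^-)(\rho-\alpha)(\rho-\ell^+)$ makes the required sign pattern automatic. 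Hence it is enough to exhibit infinitely many collinear triples $\ell^-<\alpha<\ell^+$ in $[0,1]$ on the graph of $f$.

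To produce them, I will expand
\[
G(\rho)=\ov\rho^3-2\ov\rho^2+(\ov-c_{\ell^\pm})\rho+(c_{\ell^\pm}\alpha-f(\alpha))
\]
and read off Vieta: the sum of its roots equals $2$ independently of the slope, which forces $\ell^-+\ell^+=2-\alpha$. Parametrizing $\ell^\pm=(2-\alpha)/2\pm t$ with $t>0$, the constraints $\ell^-\ge 0$, $\ell^+\le 1$, $\ell^-<\alpha<\ell^+$ translate respectively into $t\le 1-\alpha/2$, $t\le \alpha/2$ and $t>|3\alpha/2-1|$. For $\alpha\in(1/2,1)$ the binding upper bound is $\alpha/2$, and a short case split in $\alpha\le 2/3$ versus $\alpha>2/3$ gives $|3\alpha/2-1|<\alpha/2$; hence the admissible set $t\in(|3\alpha/2-1|,\alpha/2]$ is a nonempty interval, yielding a continuum of pairs $(\ell^-,\ell^+)$. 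The slope $c_{\ell^\pm}$ is then recovered from the second Vieta relation $\ell^-\ell^+=(1-\alpha)^2-c_{\ell^\pm}/\ov$, and the product Vieta relation is automatically compatible because $f(\alpha)=\ov\alpha(1-\alpha)^2$. The only step requiring care is this final nonemptiness check, which also shows that the assumption $\alpha>1/2$ is sharp: at $\alpha=1/2$ the lower and upper bounds on $t$ coincide.
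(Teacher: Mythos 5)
Your proof is correct. For the first claim you argue, as the paper does, that $(1-\rho)^3-\sigma\rho$ is strictly decreasing on $[0,1]$ with a sign change, so the two arguments for (D1) and \eqref{e:sigma} coincide. For the second claim the geometric idea is the same — secant lines through $\left(\alpha,f(\alpha)\right)$ cutting the cubic $f(\rho)=\ov\rho(1-\rho)^2$ in two further points straddling $\alpha$ — but the execution differs in a genuine way. The paper parametrizes by the slope $\mu=m/\ov$, factors out $(\rho-\alpha)$ to get the quadratic $\rho^2-(2-\alpha)\rho+(1-\alpha)^2-\mu=0$, writes its roots explicitly, and grinds through five inequalities (discriminant, $0<\rho_-$, $\rho_-<\alpha$, $\alpha<\rho_+$, $\rho_+<1$) that collapse to $(1-\alpha)(1-3\alpha)<\mu<-\alpha(1-\alpha)$, nonempty iff $\alpha>\frac12$. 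You instead parametrize by the roots themselves: Vieta forces $\ell^-+\ell^+=2-\alpha$ regardless of the slope, so the family is one-dimensional in $t$ with $\ell^\pm=\frac{2-\alpha}{2}\pm t$, and the admissibility constraints reduce to $\left|\frac{3\alpha}{2}-1\right|<t\le\frac\alpha2$, which is nonempty exactly when $\alpha>\frac12$; the slope is then recovered from the second symmetric function and the constant term is automatically consistent because $f(\alpha)=\ov\alpha(1-\alpha)^2$. Your route avoids the discriminant and the explicit root formulas, and it makes explicit a point the paper leaves implicit, namely that the sign pattern \eqref{e:existence about f} is automatic from the factorization $G(\rho)=\ov(\rho-\ell^-)(\rho-\alpha)(\rho-\ell^+)$ of a cubic with positive leading coefficient. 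Both arguments locate the same threshold $\alpha=\frac12$ and the same family of end states (e.g.\ the boundary pair $\ell^-=1-\alpha$, $\ell^+=1$ at $t=\alpha/2$); yours is the more economical computation, the paper's has the advantage of exhibiting the admissible slopes explicitly.
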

The point $\alpha$ does not need to be an inflection point of $f$; this happens, in the example above, if $\alpha=\frac23$. Figure \ref{f:fN} gives an illustration of the example considered in Lemma \ref{l:quadraticv} to real-world data. Here we use dimensional variables $0\le\rho\le\orho$, $v(\rho)=\ov (\orho-\rho)^2$ and $f$, $D$ are defined by (fcm), \eqref{e:Dgamma}, respectively. As in \cite{Nelson_2000}, we take as maximal density $\orho=150$ cars per km, maximal velocity $\overline{v}=130 \ {\rm km/h}$, $\tau = 2\ {\rm s}$ and $h=1/15800$.
%%%
\begin{figure}[htbp] %file D13a.m	
  \centering
  \begin{tabular}{ccc}
  %\includegraphics[width=5cm]{vN}
  %&
  \includegraphics[width=7.5cm]{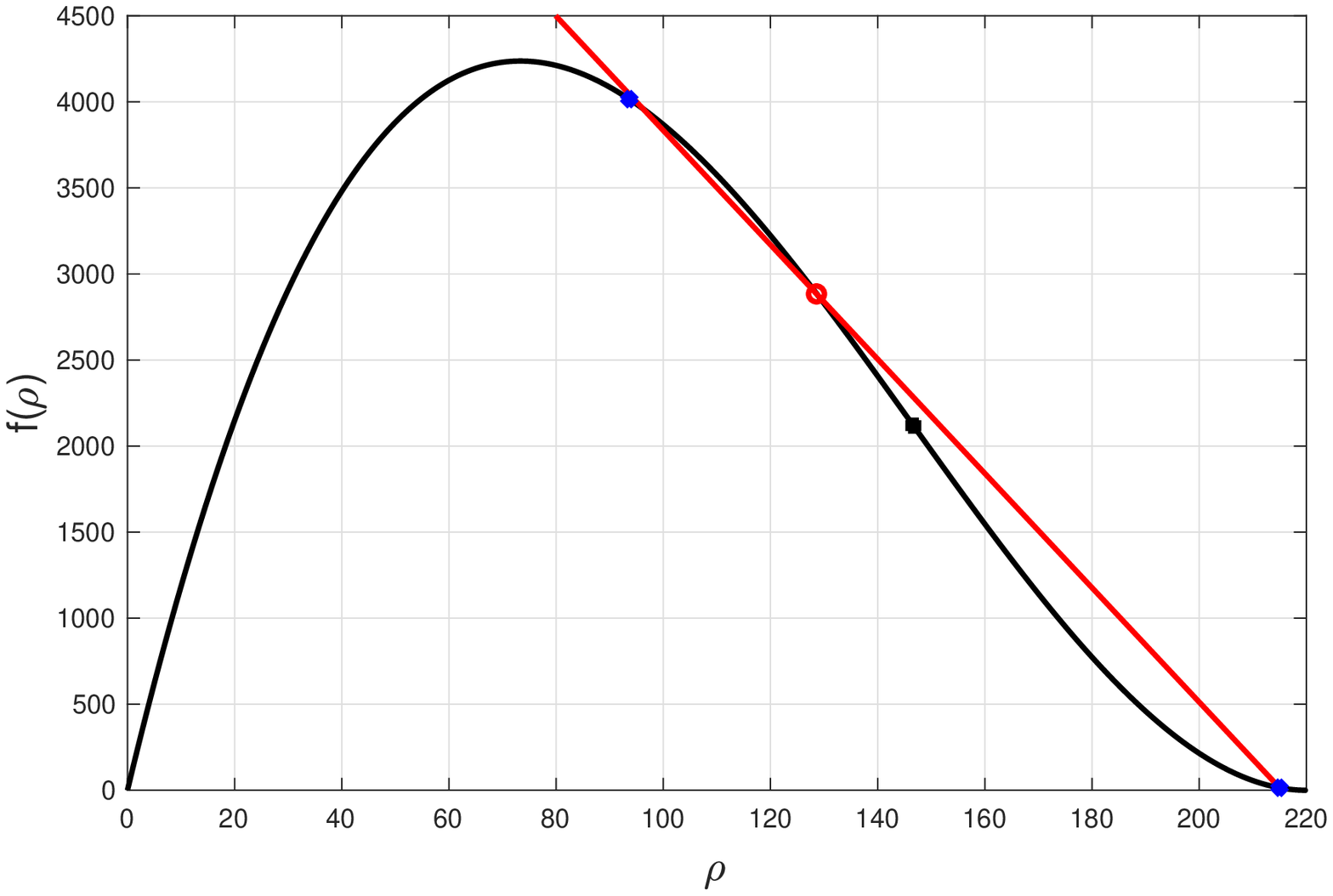}
  &
  \includegraphics[width=7.5cm]{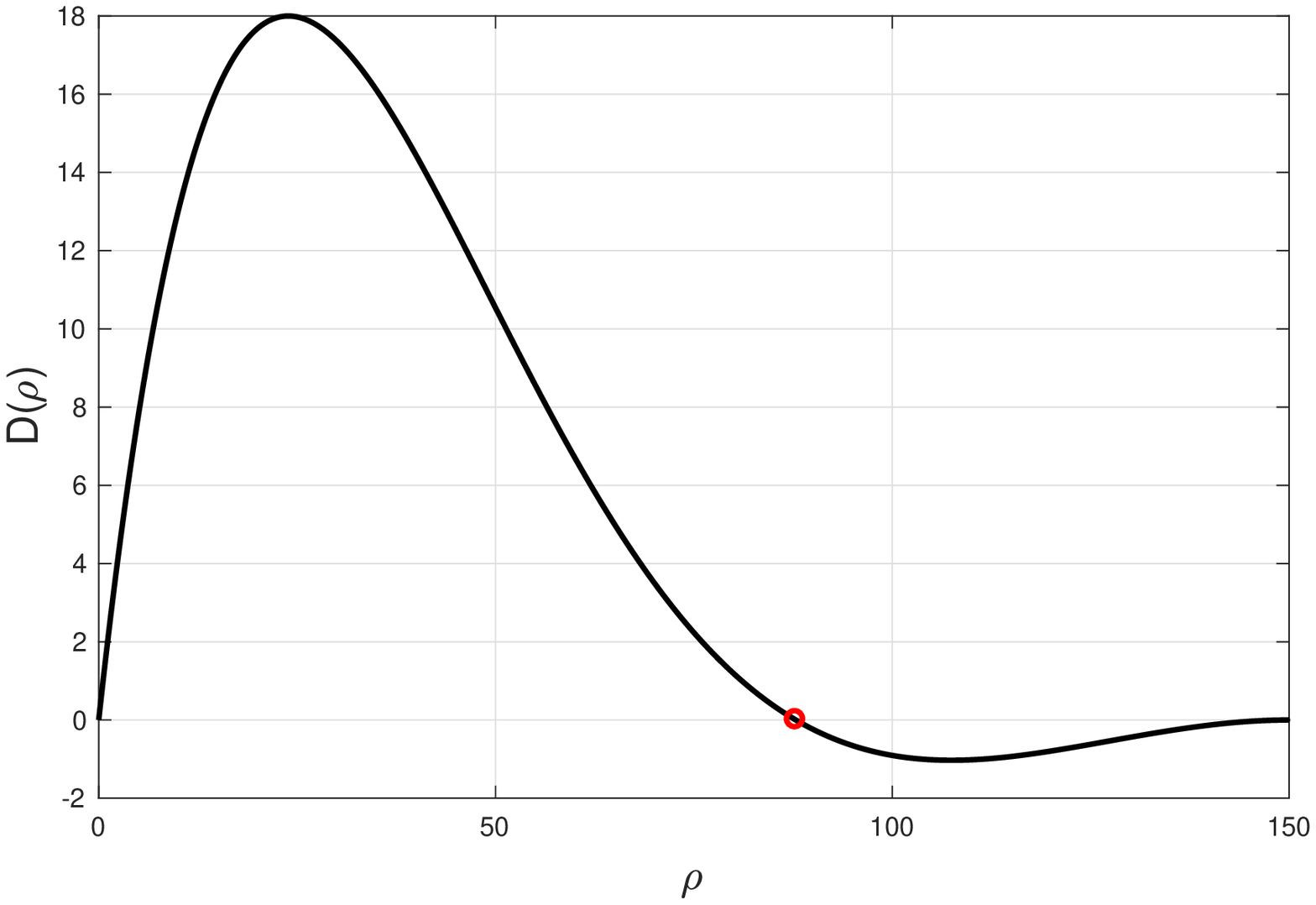}
\end{tabular}
\caption{Plots of flow and diffusivity for $v(\rho)=\ov (\orho-\rho)^2$ and $D$ as in \eqref{e:Dgamma}. Here $\orho=150$ cars/km, $\overline{v}=130 \ {\rm km/h}$, $\tau = 2\ {\rm s}$, $h=1/15800$, see \cite{Nelson_2000}. An empty circle localizes $\alpha\sim88$, a full circle the inflection point of $f$, which is 100. For $\ell^+=147$ we find $\ell^-\sim 65$.}
\label{f:fN}
\end{figure}
%%%%%%%%%%%%%%%%%%%%%%%%%%%%%%%%%%%%%%%%%%%%%%%%

The previous example can be generalized to $v(\rho)=\ov(1-\rho^p)^q$, for $p>0$, $q>1$. The condition $q>1$ is needed in order that $f$ has an inflection point (at $\rho = (\frac{1+p}{1+pq})^{1/p}$) in $(0,1)$.

\smallskip

We now consider two laws for pedestrian flows \cite{Venuti-Bruno}. In normalized variables they can be written as
\begin{equation}\label{e:2v}
v(\rho)  = \ov\left(1-e^{\gamma\left(1-\frac1\rho\right)}\right)
\quad
\hbox{ and }
\quad
v(\rho) = \left\{
\begin{array}{ll}
\ov & \hbox{ if }\rho \le a,
\\
\ov e^{\gamma\frac{a-\rho}{1-\rho}} & \hbox{ if }\rho>a,
\end{array}
\right.
\end{equation}
where $\gamma>0$, $\ov>0$ and $0\le a<1$ is a critical density that separates free from congested flow. Both functions are extended by continuity at $0$ and $1$, respectively. The law $\eqref{e:2v}_1$ is called Kladek formula. The law $\eqref{e:2v}_2$ with $a>0$ does not satisfies (fcm) because $v\notin C^1$, unless we consider it only in $(a,1)$ or we set $a=0$; the latter case is also used to model vehicular flows.

About $\eqref{e:2v}_1$, we notice that in this case $f$ is strictly concave; then \eqref{e:existence about f} cannot hold and hence we focus on case $\eqref{e:2v}_2$ from now on. In that case, one easily proves that if $a\gamma<2$ then $f$ is concave in $[0,\tilde\rho)$ and convex in $(\tilde\rho,1]$ for
\begin{equation}\label{e:tilderho}
\tilde\rho = \frac{1}{1+\frac\gamma2(1-a)}\in(a,1).
\end{equation}

If $D$ is as in \eqref{e:DN}, then $D(1)=0$ but $D$ is strictly positive in a left neighborhood of $0$; hence, condition (D1) is not satisfied.
%
%and $D$ vanishes at points $\rho$ such that
%\begin{equation}\label{e:gg}
%\frac{\delta}{\ov \gamma(1-a)\tau}\frac{(1-\rho)^2}{\rho} = e^{\gamma\frac{a-\rho}{1-\rho}}.
%\end{equation}
%In a left neighborhood of $1$ one has $\frac{\delta}{\ov \gamma(1-a)\tau}\frac{(1-\rho)^2}{\rho}\sim %C(1-\rho)^2$, for $C>0$, while the right-hand side of \eqref{e:gg} vanishes at infinite order. Then the left-%hand side of \eqref{e:gg} is larger the the right-hand side in a left neighborhood of $1$; as a consequence, %either $D$ never vanishes, or it vanishes at one point but it remains otherwise positive, or it vanishes at two %points. Condition (D1) is satisfied in none of these cases.
If $D$ is as in \eqref{e:DT}, then we have $D(0)=D(1)=0$ and $D(\rho)>0$ if $\rho\in(0,1)$.

Assume $D$ is as in \eqref{e:Dgamma}. Then $D(1)=0$ and $D$ vanishes at the point $\alpha$ defined by
\[
\frac{\tau\gamma(1-a)}{h\ov}\frac{\alpha}{(1-\alpha)^2}=e^{\gamma\frac{a-\alpha}{1-\alpha}},
\]
provided that $a<\frac{1}{1+\bar\sigma}$, for $\bar\sigma= \tau\gamma/(h\ov)$. In this case assumptions (f) and (D1) are satisfied and conditions \eqref{e:existence wfs}, \eqref{e:existence about f} can be numerically checked. We refer to Figure \ref{f:BC} for the plots of flow and diffusivity in the case of real-world data.

%%%
\begin{figure}[htbp]	%file DBC2.m
  \centering
  \begin{tabular}{cc}
  %\includegraphics[width=5cm]{vBC}
  %&
  \includegraphics[width=7.5cm]{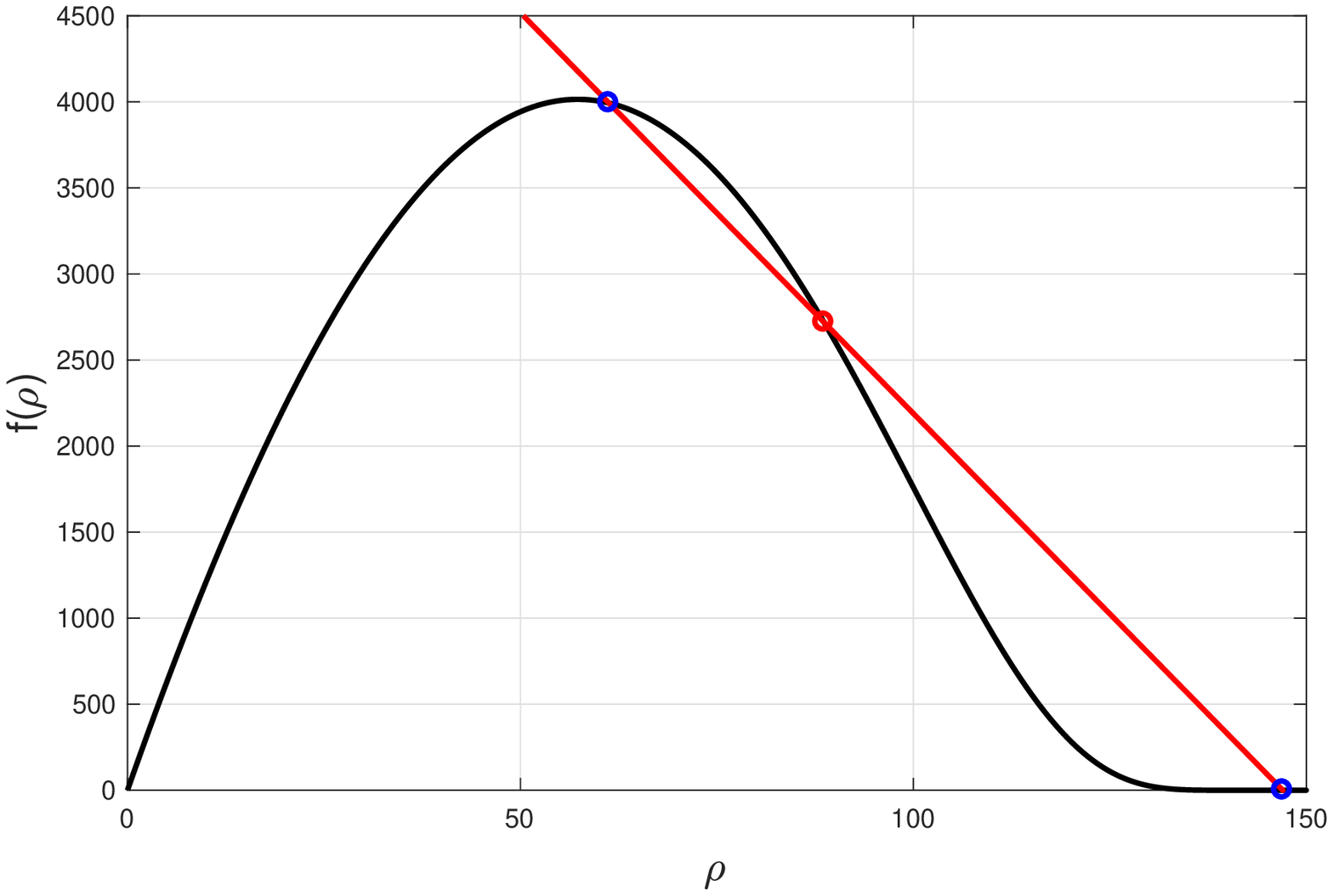}
  &
  \includegraphics[width=7.5cm]{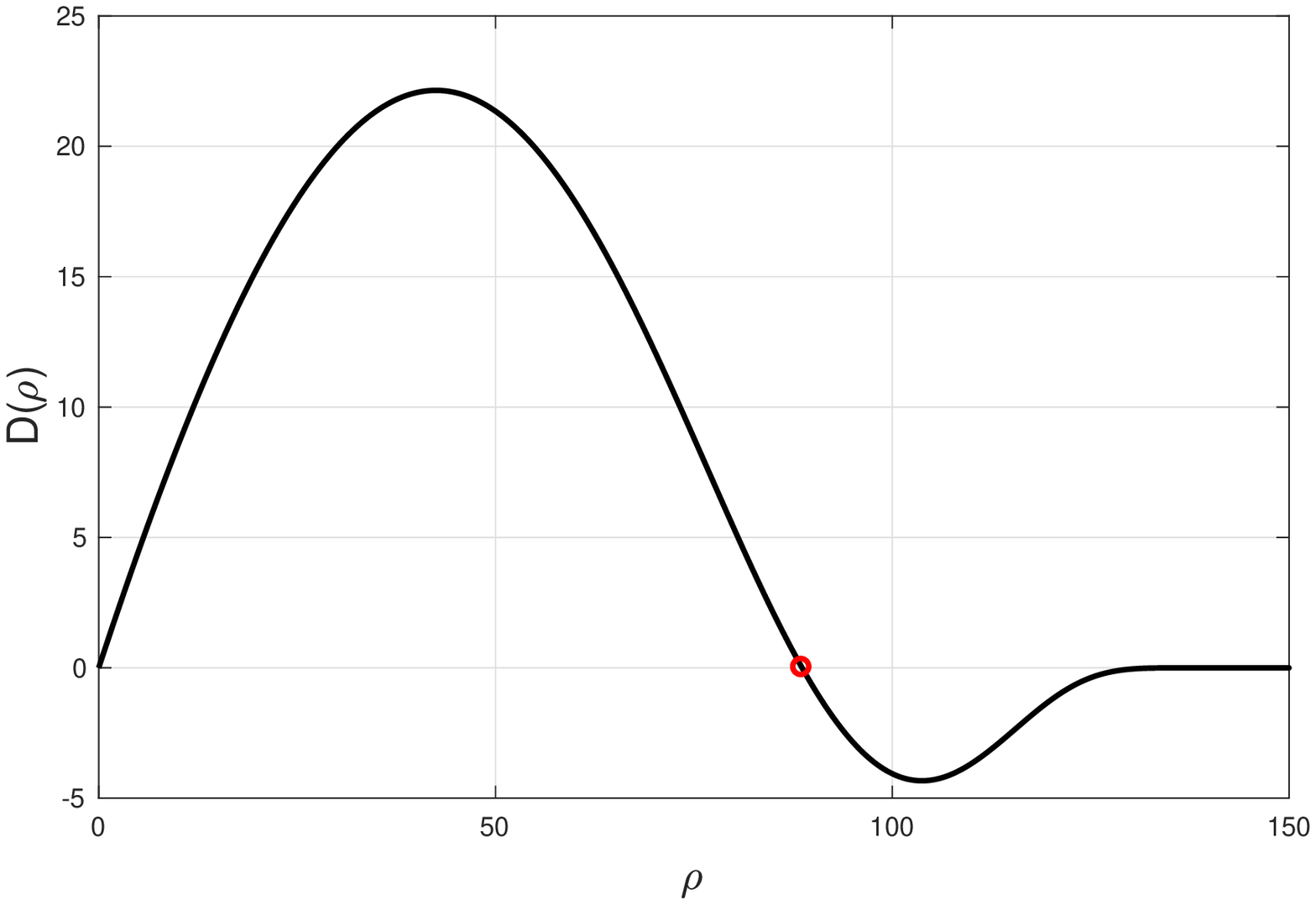}
\end{tabular}
\caption{Plots of  flow and diffusivity for $v$ given by $\eqref{e:2v}_2$ with $a=0$ and $D$ as in \eqref{e:Dgamma}. Data are as in Figure \ref{f:fN}, $\gamma=1$; here $\alpha\sim89$. For $\ell^+=147$ we find $\ell^-\sim 61$.}
\label{f:BC}
\end{figure}
%%%%%%%%%%%%%%%%%%%%%%%%%%%%%%%%%%%%%%%%%%%%%%%%

At last, consider again $v$ given by $\eqref{e:2v}_2$ with $a=0$ but $D$ as in \eqref{e:Dgammap}. In this case, for real-world data, conditions \eqref{e:existence wfs} and \eqref{e:existence about f} are satisfied if $\tau$ is sufficiently small, see Figure \ref{f:BCT}. For higher values of $\tau$ the point $\alpha$ falls on the left of the maximum point of $f$.

%%%
\begin{figure}[htbp]	
  \centering
  \begin{tabular}{cc}
  %\includegraphics[width=5cm]{vBCT}
  %&
  \includegraphics[width=7.5cm]{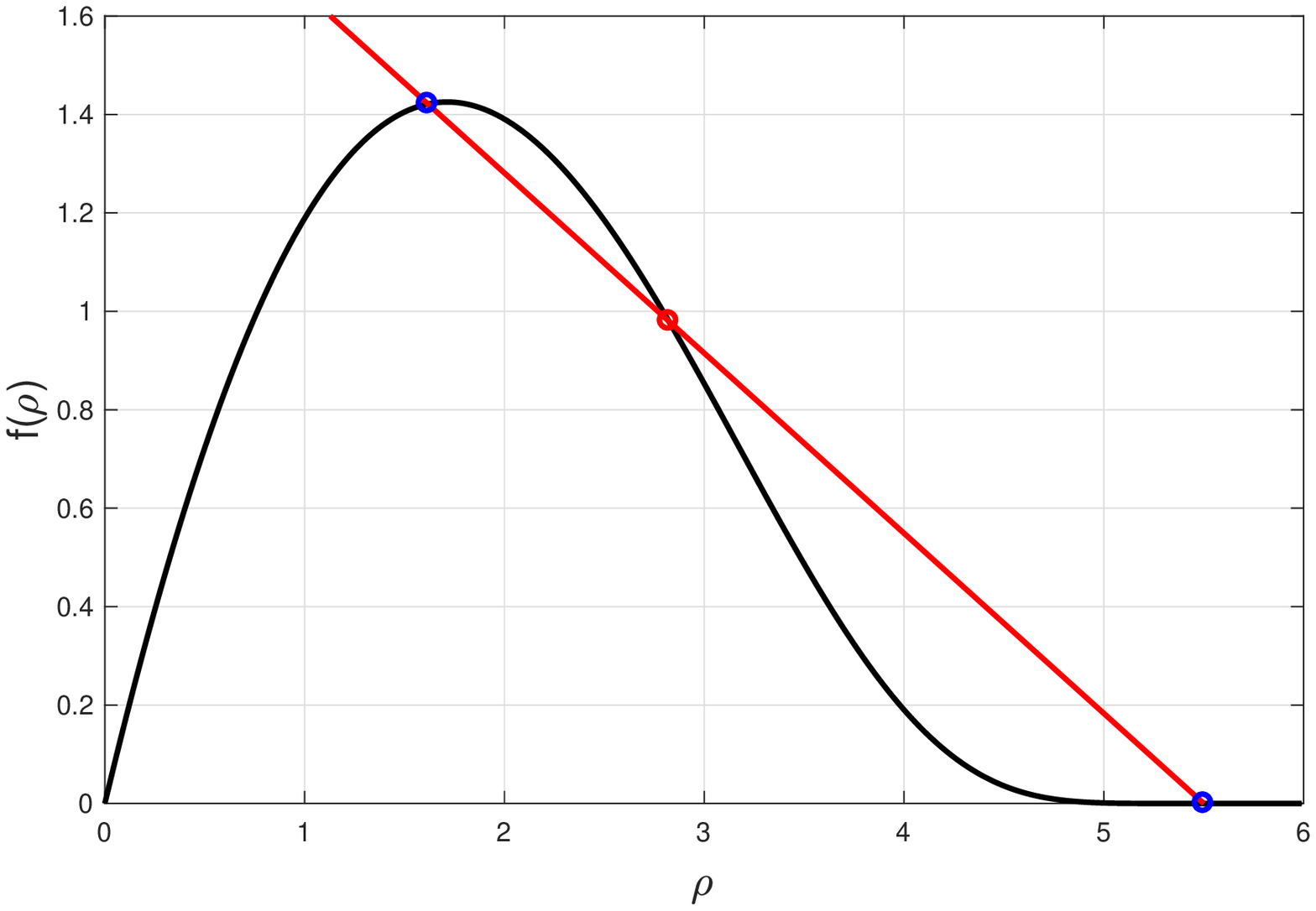}
  &
 \includegraphics[width=7.5cm]{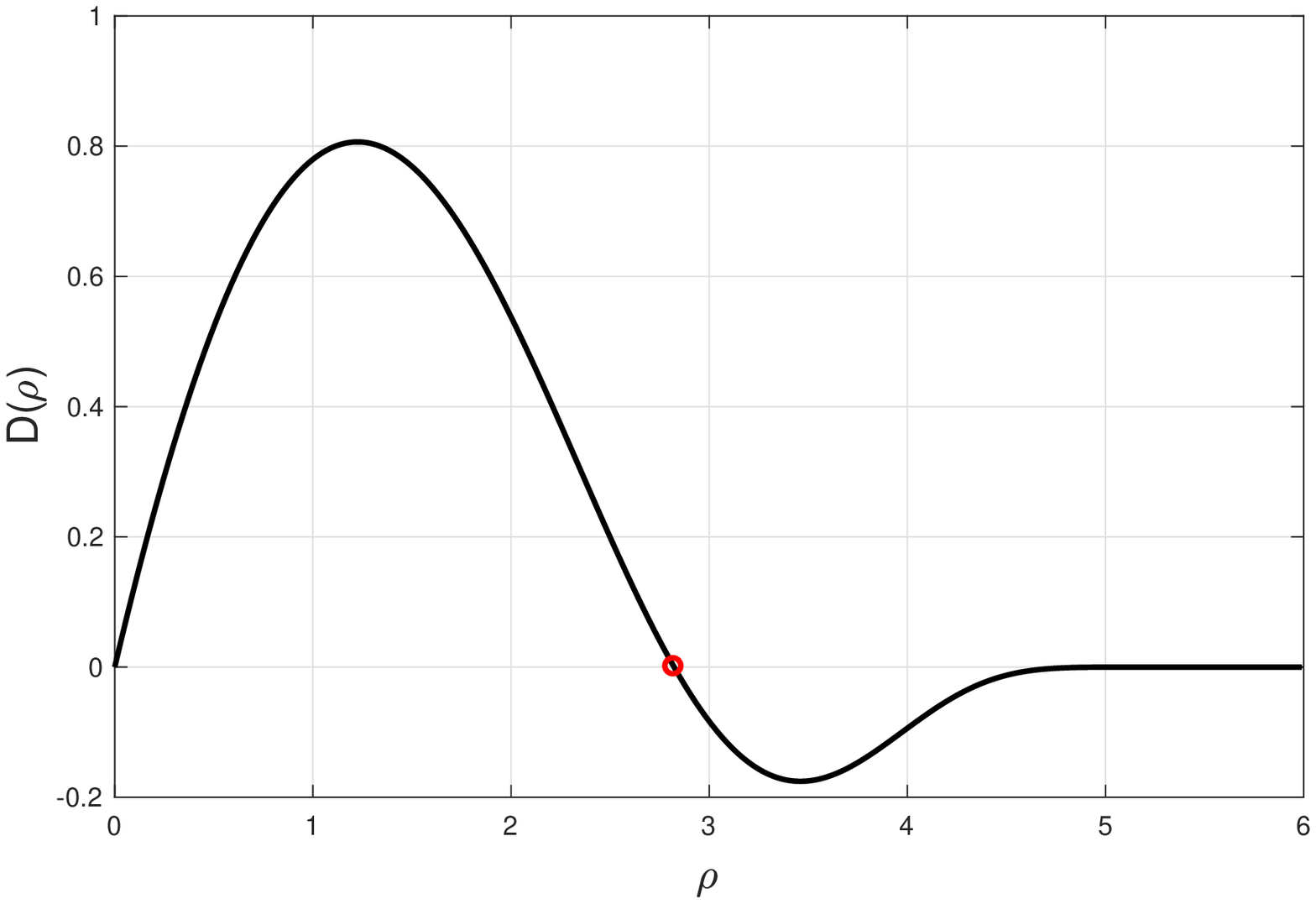}
\end{tabular}
\caption{Plots of flow and diffusivity for $v$ as in $\eqref{e:2v}_2$, $a=0$, and $D$ as in \eqref{e:Dgammap}. Here $\orho=6$ pedestrian/m, $\overline{v}=1.7 \ {\rm m/s}$, $\tau = 0.5\ {\rm s}$, $h=1.5$, $\gamma=1.788$. Data are taken from \cite{Venuti-Bruno}, in the case of rush-hours pedestrian flows.}
\label{f:BCT}
\end{figure}
%%%%%%%%%%%%%%%%%%%%%%%%%%%%%%%%%%%%%%%%%%%%%%%%

\subsection{The case (D2)}
We now focus on (D2). As one may guess from the (positive and negative) examples provided in case (D1), in case (D2) it is hard to provide explicit and realistic expressions of velocity laws $v(\rho)$ such that (fcm), (D2) and \eqref{e:existence wfs2}, \eqref{e:existence about f2} are satisfied when $D$ is chosen from one of the simple expressions \eqref{e:DN}--\eqref{e:Dgammap}. We point out this difficulty by considering just one case, analogous to the one in Lemma \ref{l:noD1}. Assume
\begin{equation}\label{e:Dalphabeta}
D(\rho)=-a(\rho)v'(\rho),\quad \hbox{$a>0$ in $(0,1)\setminus\{\alpha,\beta\}$}.
\end{equation}
This covers both \eqref{e:DT} and \eqref{e:Dgamma} with $\tau=0$. The simplest prototype for $v$ in order that (D2) holds and $v(1)=0$ is that $v$ is a third-order polynomial vanishing at $1$, namely
\begin{equation}\label{e:vv}
v'(\rho) = -(\rho-\alpha)(\rho-\beta), \quad v(1) =0.
\end{equation}
For simplicity we dropped any positive multiplicative constant in the right-hand side of \eqref{e:vv}. %Define
%\(
%Q = \left\{(\alpha,\beta)\in(0,1)^2\colon 3\beta-2<\alpha<\beta\right\}.
%\)
%%%%%%%%%%%%%%%%%%%%%%%%% Figure Q
%
%\begin{figure}[htbp]
%\begin{picture}(100,120)(-80,-10)
%\setlength{\unitlength}{1pt}
%
%\put(80,0){
%\put(0,0){\vector(1,0){130}}
%\put(0,0){\line(-1,0){10}}
%\put(130,8){\makebox(0,0){$\alpha$}}
%\put(0,-10){\vector(0,1){120}}
%\put(-3,110){\makebox(0,0)[r]{$\beta$}}
%
%\put(100,0){\line(0,1){100}}
%\put(0,100){\line(1,0){100}}
%\put(0,0){\thicklines{\line(1,1){100}}}
%\put(0,66){\thicklines{\line(3,1){100}}}
%\put(33,60){\makebox(0,0)[t]{$Q$}}
%
%\put(100,-3){\makebox(0,0)[t]{$1$}}
%\put(-3,100){\makebox(0,0)[r]{$1$}}
%\put(-3,66){\makebox(0,0)[r]{$\frac23$}}
%}
%\end{picture}
%\caption{\label{f:Q}{The set $Q$.}}
%\end{figure}
%%%%%%%%%%%%%%%%%%%%%%%%%%%%%% End Figure Q

\begin{lemma}\label{l:Q}
Let $D$ be as in \eqref{e:Dalphabeta} with $v$ as in \eqref{e:vv}. Then assumptions {\rm (fcm)}, {\rm (D2)} hold if and only if $(\alpha,\beta)\in (0,1)^2$ satisfy $3\beta-2<\alpha<\beta$.
In this case the line $s_{\ell^\pm}$ has positive slope.
\end{lemma}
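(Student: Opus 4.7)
The plan is to integrate $v'(\rho)=-(\rho-\alpha)(\rho-\beta)$ explicitly under $v(1)=0$, use the resulting closed form to split (fcm) and (D2) into tractable sign conditions, and then compute the critical quantity $v(\alpha)$ in a factored form.

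First I would show that (D2) alone is equivalent to $0<\alpha<\beta<1$. From the formula for $v'$, the sign of $v'$ is negative on $(0,\alpha)\cup(\beta,1)$ and positive on $(\alpha,\beta)$, with simple zeros at $\alpha$ and $\beta$; multiplying by $-a(\rho)$, strictly positive off $\{\alpha,\beta\}$, gives the sign structure required by (D2). Conversely, that structure forces $\alpha,\beta\in(0,1)$ with $\alpha<\beta$ (so that each of the three subintervals in (D2) is nonempty). The $C^1$ regularity of $D$ is automatic since $v$ is polynomial.

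Next I would reduce the constraint $v\ge 0$ on $[0,1)$ from (fcm) to a single inequality. The sign analysis of $v'$ above shows $v$ decreases on $[0,\alpha]$, increases on $[\alpha,\beta]$ and decreases on $[\beta,1]$ down to $v(1)=0$. Hence the minimum of $v$ on $[0,1]$ equals $\min\{v(\alpha),v(1)\}=\min\{v(\alpha),0\}$, so $v\ge0$ on $[0,1)$ is equivalent to $v(\alpha)\ge0$ (and the strict version to $v(\alpha)>0$).

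The decisive computation is the factored form of $v(\alpha)$. Integrating $v'$, fixing the constant by $v(1)=0$, and then factoring $(1-\alpha)$ out twice (using the identity $2-\alpha-\alpha^2=(2+\alpha)(1-\alpha)$), one obtains
\[
v(\alpha)=\frac{(1-\alpha)^2\,(\alpha+2-3\beta)}{6}.
\]
This is the only algebraic step that is not entirely routine, but it collapses cleanly; from it, $v(\alpha)>0$ is equivalent to $\alpha>3\beta-2$, which combined with $0<\alpha<\beta<1$ from (D2) yields the stated characterization.

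For the final assertion on the slope of $s_{\ell^\pm}$, by \eqref{e:existence wfs2} this slope equals $(f(\beta)-f(\alpha))/(\beta-\alpha)$. Since $v$ is strictly increasing on $(\alpha,\beta)$ and we just showed $v(\alpha)>0$, one has $v(\beta)>v(\alpha)>0$; together with $\beta>\alpha>0$ this gives $f(\beta)=\beta v(\beta)>\alpha v(\alpha)=f(\alpha)$, so the slope is strictly positive. The main obstacle is the compact factorization of $v(\alpha)$, which is straightforward once $(1-\alpha)$ is pulled out twice; everything else is a direct sign analysis and monotonicity argument.
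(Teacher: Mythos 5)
Your proposal is correct and follows essentially the same route as the paper: integrate $v'$ with $v(1)=0$, reduce (fcm) to the single condition $v(\alpha)\ge 0$ via the monotonicity pattern of $v$, obtain the factorization $v(\alpha)=\tfrac16(1-\alpha)^2(\alpha-3\beta+2)$, and deduce the positive slope from $v(\beta)>v(\alpha)\ge 0$ and $\beta>\alpha>0$. No substantive differences to report.
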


Lemma \ref{l:Q} shows that, if $D$, $v$ are as in \eqref{e:Dalphabeta}, \eqref{e:vv}, respectively, and assumption (D2) is satisfied, then the case depicted in Figure \ref{f:f2}{\em (b)} (with a line $s_{\ell^\pm}$ having negative slope) never takes place. In other words, this diffusivity cannot provide viscous profiles to  the shock waves introduced in \cite{Colombo-Rosini2005}, case (R3){\em (b)}.

%%%%%%%%%%%%%%%%%%%%%%%%%%%%%%%%%%%%%%%%%%%%%%%%

\section{A preliminary result}\label{s:preliminary}
\setcounter{equation}{0}

In this section we state a result about the existence of wavefront solutions for \eqref{e:E} in intervals $[a,b]$ where the diffusivity $D$ has {\em constant} sign. In the case $D$ is positive this result is well known \cite{GK} and the case when $D$ is negative is deduced from the previous one. A short proof of both cases is provided for completeness. The line
\begin{equation*}
s(\rho):=\frac{f(b)-f(a)}{b-a}(\rho-b)+f(b), \qquad \rho \in \R
\end{equation*}
between $\left(a, f(a)\right), \, \left(b, f(b)\right)$  plays a fundamental role in this discussion. Indeed, the existence of wavefront solutions with profile $\phi$ from $a$ to $b$ depends both on the sign of $D$ and on the behavior of the graph of $f(\rho)$ with respect to the line $s(\rho)$. Moreover  $\phi$ satisfies either the boundary conditions
\begin{equation}\label{e:incr}
\phi(-\infty)=a, \qquad \phi(+\infty)=b
\end{equation}
or the opposite ones
\begin{equation}\label{e:decr}
\phi(-\infty)=b, \qquad \phi(+\infty)=a.
\end{equation}
We use the notation
\begin{equation*}%\label{e:J}
J:=\left\{\xi \in \R \, : \, a <\phi(\xi) <b\right\}.
\end{equation*}

\begin{theorem}\label{t:wfsConst} \ Let $f, \, D \in C^1[a,b]$. Assume
\begin{enumerate}[{(a)}]
\item $D(\rho)>0$ in $(a,b)$; then
\begin{itemize}
\item[{(a1)}] equation \eqref{e:E} has a wavefront of profile $\phi$ with \eqref{e:incr} iff  $f(\rho)> s(\rho)$ for $ \rho \in (a,b)$;
\item[{(a2)}] equation \eqref{e:E} has a wavefront of profile $\phi$ with \eqref{e:decr} iff $f(\rho)< s(\rho)$ for $\rho \in (a,b)$;
\end{itemize}
 \item $D(\rho)<0$ in $(a,b)$; then
 \begin{itemize}
\item[{(b1)}] equation \eqref{e:E} has a wavefront of profile $\phi$ with \eqref{e:decr} iff $f(\rho)> s(\rho)$ for $\rho \in (a,b)$;
\item[{(b2)}] equation \eqref{e:E} has a wavefront of profile $\phi$ with \eqref{e:incr} iff $f(\rho)< s(\rho)$ for $\rho \in (a,b)$.
\end{itemize}
 \end{enumerate}
The profile $\phi$ is unique (up to shifts), $\phi \in C^2(J)$ and $\phi^{\, \prime}(\xi) > 0$ ($\phi^{\, \prime}(\xi) < 0$) for $\xi \in J$ in cases (a1) and (b2) (resp., in cases (a2) and (b1));
the wave speed is
\begin{equation}\label{eq:speed}
c = \dfrac{f(b) - f(a)}{b-a}.
\end{equation}
Moreover, the following holds true.
\begin{itemize}
\item[{(i)}] $\phi$ is sharp in $a$ if and only if $D(a)=0.$
In this case there exists $\xi_a$ such that $\phi(\xi)=a$ for $\xi\le\xi_a$ (for $\xi\ge\xi_a$) in cases (a1), (b2) (resp., in cases (a2), (b1)), $\phi$ is not differentiable in $\xi_a$ and $\lim_{\xi \to \xi_a}D\left(\phi(\xi)\right)\phi^{\prime}(\xi) = 0$.

\item[{(ii)}] $\phi$ is sharp in $b$ if and only if $D(b)=0.$
In this case there exists $\xi_b$ such that $\phi(\xi)=b$ for $\xi\ge\xi_b$ (for $\xi\le\xi_b$) in cases (a1), (b2) (resp., in cases (a2), (b1)), $\phi$ is not differentiable in $\xi_b$ and $\lim_{\xi \to \xi_b}D\left(\phi(\xi)\right)\phi^{\prime}(\xi) = 0$.

\item[{(iii)}] In the other cases $J=\mathbb{R}$  and $\lim_{\xi \to \pm\infty}\phi^{\prime}(\xi) = 0$.
\end{itemize}
\end{theorem}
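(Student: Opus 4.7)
\emph{Plan of proof.} The strategy is to reduce all four cases to (a1). Case (a2) follows from (a1) by the reflection $\xi \mapsto -\xi$, which swaps the boundary conditions \eqref{e:incr} and \eqref{e:decr} and reverses the signs of $c$ and $\phi'$. Cases (b1), (b2) follow from (a2), (a1) respectively via Remark \ref{r:ac}: the substitution $(D,f,c) \mapsto (-D,-f,-c)$ preserves wavefront profiles, turns $D<0$ into $D>0$, and leaves $\mathrm{sign}(f-s)$ unchanged, because $f$ and $s$ change sign simultaneously. Thus it suffices to prove (a1).

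For (a1), the first step is the classical reduction to a first-order autonomous ODE. Integrating \eqref{e:ODE} once gives $D(\phi)\phi' - f(\phi) + c\phi = K$ for some constant $K$. Using monotonicity of $\phi$, boundedness, and the $C^1$ regularity of $D$, one argues that $D(\phi)\phi'$ admits null limits at $\pm\infty$ (since $\phi' \in L^1(\R)$ by monotonicity, a nonzero limit would be incompatible with $\phi$ approaching a finite value). Evaluating at $\pm\infty$ gives $K = ca - f(a) = cb - f(b)$, hence the speed formula \eqref{eq:speed} and the reduced equation
\[
\phi' = F(\phi), \qquad F(\rho) := \frac{f(\rho) - s(\rho)}{D(\rho)}, \qquad \rho \in (a,b).
\]

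Now comes the core analysis. Necessity of $f > s$ on $(a,b)$ is clear: $\phi' \ge 0$ and $D > 0$ force $f \ge s$, and equality on a subinterval would give a plateau at some $\phi_0 \in (a,b)$ with $f(\phi_0) = s(\phi_0)$, which is impossible under strict inequality. Conversely, if $f > s$, then $F \in C^1(a,b)$ is strictly positive, and standard ODE theory furnishes, for any initial datum $\phi(0) = \rho_0 \in (a,b)$, a unique maximal solution $\phi \in C^2(J)$, strictly increasing on $J = (\xi_a, \xi_b) \subseteq \R$, with $\phi(\xi_a^+) = a$ and $\phi(\xi_b^-) = b$. Uniqueness up to shifts follows from autonomy, and the monotonicity and smoothness assertions are immediate.

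Finally, the endpoint analysis yields (i), (ii), (iii). The change of variable $\rho = \phi(\xi)$ gives
\[
\xi_b - \xi_a = \int_a^b \frac{D(\rho)}{f(\rho) - s(\rho)}\, d\rho,
\]
reducing the dichotomy between finite and infinite $\xi_a, \xi_b$ to the integrability of $D/(f-s)$ at the endpoints. Since $f - s$ vanishes at $a$ and $b$ while $D \in C^1$, this integral converges near $a$ if and only if $D(a) = 0$, and analogously at $b$. When $D(a) > 0$, one has $\xi_a = -\infty$ and $\phi'(\xi) \to 0$ by the ODE — case (iii). When $D(a) = 0$, $\xi_a$ is finite; extending $\phi$ by the constant $a$ on $(-\infty, \xi_a]$ produces a global wavefront whose flux $D(\phi)\phi'$ extends continuously by $0$ at $\xi_a$, so \eqref{e:def-tw} holds across the junction, while $\phi'(\xi_a^+) = \lim_{\rho \to a^+} F(\rho)$ is nonzero (finite or $+\infty$), giving sharpness — case (i). Endpoint $b$ is handled symmetrically. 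The main obstacle I expect is precisely this last point: verifying that the piecewise-defined sharp profile satisfies the weak formulation \eqref{e:def-tw} at the junction $\xi_a$, which rests on the continuity and vanishing of the flux there — the crux of the corresponding positive-diffusivity argument in \cite{GK}.
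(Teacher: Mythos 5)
Your proof takes a genuinely different route from the paper's for the core case. The paper does not prove (a1)--(a2) from scratch: it normalizes $[a,b]$ to $[0,1]$ by the affine substitutions $r=(b-\rho)/(b-a)$ or $r=(\rho-a)/(b-a)$ and invokes \cite[Theorem 9.1]{GK} for the resulting equation \eqref{e:rtR} with positive diffusivity, obtaining the existence criterion, the speed, the strict monotonicity and properties (i)--(iii) as direct translations of that theorem; cases (b1)--(b2) are then reduced to (a1)--(a2) by the sign flip $(D,f)\mapsto(-D,-f)$, essentially as you propose. Two small inaccuracies in your reductions: the flip sends $f-s$ to $-(f-s)$, not to itself (the bookkeeping still comes out right because the required inequality flips too), and your derivation of (a2) from (a1) needs $f\mapsto-f$ in addition to $\xi\mapsto-\xi$. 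What is genuinely new in your writeup is the self-contained phase-plane proof of (a1): integrating the weak formulation once, identifying the integration constant via the null limits of $D(\phi)\phi'$ at $\pm\infty$, and reducing to the scalar ODE $\phi'=\bigl(f(\phi)-s(\phi)\bigr)/D(\phi)$. That part is sound and buys independence from \cite{GK}: the existence/nonexistence dichotomy, the speed formula \eqref{eq:speed}, uniqueness up to shifts and $\phi'>0$ on $J$ all come out correctly.

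The genuine gap is in your endpoint analysis for (i)--(iii). You assert that $\int_a \frac{D(\rho)}{f(\rho)-s(\rho)}\,d\rho$ converges near $a$ if and only if $D(a)=0$. The ``only if'' direction is fine: $f-s$ is $C^1$ and vanishes at $a$, so $f-s\le C(\rho-a)$ and $D(a)>0$ forces a $1/(\rho-a)$ divergence. But the ``if'' direction does not follow from the stated hypotheses: $D(a)=0$ only gives the upper bound $D(\rho)\le L(\rho-a)$, while $f-s$ may vanish to higher than first order at $a$. For instance $D(\rho)=\rho(1-\rho)$ and $f(\rho)=\rho^3(1-\rho)$ on $[0,1]$ (so $s\equiv0$) give $\phi'=\phi^2$, whose solution reaches $0$ only as $\xi\to-\infty$ even though $D(0)=0$; and if $f-s\sim(\rho-a)^{3/2}$ the value $a$ is attained at a finite $\xi_a$ but with $\phi'(\xi_a^+)=\lim_{\rho\to a^+}F(\rho)=0$, so the glued profile is differentiable at $\xi_a$ and hence not sharp in the sense of Definition \ref{d:tws}. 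The missing ingredient is a first-order nondegeneracy of $f-s$ at the endpoints (equivalently $f'(a)\ne c$, resp.\ $f'(b)\ne c$), under which both your integral criterion and the nonvanishing of $\lim_{\rho\to a^+}F(\rho)$ do hold. The paper avoids confronting this by importing (i)--(iii) wholesale from \cite[Theorem 9.1]{GK}; a self-contained argument must either add that nondegeneracy assumption or phrase (i)--(ii) in terms of the convergence of the integral rather than of the values $D(a)$, $D(b)$ alone.
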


\begin{proof} \ We first prove the cases {\em (a1)}, {\em (a2)} by means of \cite[Theorem 9.1]{GK}; then, we reduce items {\em (b1)}, {\em (b2)} to {\em (a1)}, {\em (a2)}, respectively, by a suitable change of variables.

Preliminarly, consider equation \eqref{e:rtR}
%\begin{equation}\label{e:rtR}
%r_t+g(r)_x=\left(E(r)r_x \right)_x, \quad r\in[0,1],
%\end{equation}
for $g\in C^1[0,1]$ and $E\in C^1[0,1]$. When $E>0$ in $(0,1)$, and assuming $g(0)=0$ without any loss of generality, by \cite[Theorem 9.1]{GK} there exists a wavefront from $1$ to $0$ of equation \eqref{e:rtR} if and only if $g(r)/r<g(1)$ for every  $r\in(0,1)$; the speed of the wavefront is $g(1)$.

\smallskip
\noindent \emph{(a1)} \ We denote
\begin{equation}\label{e:GE1}
E(r):=D\left(b-(b-a)r\right),  \quad g(r):=-\dfrac{f\left(b-(b -a)r\right)-f(b)}{b-a}, \quad   r\in[0,1].
\end{equation}
Notice that $E>0$ in $(0,1)$ and $g(0)=0$. The condition $f(\rho)>s(\rho)$, $\rho \in (a,b)$, is equivalent to $g(r)/r<g(1)$, $r\in(0,1)$; if it holds, by \cite[Theorem 9.1]{GK} equation \eqref{e:rtR} with $g$ and $E$ as in \eqref{e:GE1} has a wavefront with profile $\psi$ such that $\psi \in C^2(I)$, where
\begin{equation}\label{e:defI}
I:=\{\xi \in \R \, :\, 0<\psi(\xi) <1\}.
\end{equation}
Moreover, the following hold true:
\begin{equation}\label{e:psi1}
\psi(-\infty) =1, \ \psi(+\infty)=0,\qquad \psi^{\, \prime}(\xi) <0 \text{ for } \xi \in I, \qquad c = \dfrac{f(b) - f(a)}{b-a}.
\end{equation}
At last, by Remark \ref{r:ac}, $\psi$ satisfies
\begin{equation*}
\left( E(\psi)\psi^{\prime}\right)^{\prime}+(c\psi -g(\psi))^{\prime}=0 \quad \text{ in } I.
\end{equation*}
Then we define $\phi(\xi):=b-(b-a)\psi(\xi)$, $\xi \in \R$. It is straightforward to show that $\phi \in C^2(J)$, it satisfies \eqref{e:ODE} in $J$, \eqref{e:incr} and $\phi^{\prime}(\xi) >0$ for $\xi \in J$. Moreover, since $\psi$ satisfies properties analogous to \emph{(i)}--\emph{(iii)} by \cite[Theorem 9.1]{GK}, properties \emph{(i)}--\emph{(iii)} for $\phi$ easily follow (see also \cite[Theorem 3.2]{CdRMR}). Then equation \eqref{e:E} has a wavefront with profile $\phi$ and speed \eqref{eq:speed}. The converse implication follows directly.

\smallskip
\noindent \emph{(a2)} \ We denote
\begin{equation}\label{e:GE2}
E(r):=D\left((b-a)r+a\right), \quad  g(r):=\dfrac{f\left((b-a)r+a\right)-f(a)}{b-a}, \quad r\in[0,1].
\end{equation}
As in \emph{(a1)}, we have $E>0$; moreover, $f(\rho)<s(\rho)$ for $\rho \in (a,b)$ if and only if $g(r)/r<g(1)$ for $r\in(0,1)$.  Hence, equation \eqref{e:rtR} with $E$ and $g$ as in \eqref{e:GE2} has a wavefront with profile $\psi \in C^2(I)$ with $I$ as in \eqref{e:defI} satisfying \eqref{e:psi1} if and only if $f(\rho)<s(\rho)$ for $\rho \in (a,b)$. The function $\phi(\xi):=(b-a)\psi(\xi)+a$, $\xi \in \R$, is the profile of a wavefront solution of equation \eqref{e:E} with speed \eqref{eq:speed}; moreover $\phi \in C^2(J)$, it  satisfies \eqref{e:decr} and $\phi^{\, \prime}(\xi)<0$ for $\xi \in J$; properties \emph{(i)}--\emph{(iii)} are deduced as above.

\smallskip

\noindent \emph{(b1)} \ We denote
\begin{equation}\label{e:GE3}
E(r):=-D\left(b-(b-a)r\right), \quad g(r):=\dfrac{f\left(b-(b-a )r\right)-f(b)}{b-a}, \quad r\in[0,1].
\end{equation}
Then $E>0$ in $(0,1)$, $g(0)=0$ and condition $g(r)>g(1)r$, $r\in (0,1)$, is equivalent to $f(\rho)>s(\rho)$, $\rho \in (a,b)$. By \emph{(a1)}, equation \eqref{e:rtR} with $g$ and $E$ as in \eqref{e:GE3} has a wavefront with profile $\psi$ satisfying $\psi(-\infty) =0$, $\psi(+\infty)=1$
if and only if $f(\rho)>s(\rho)$ for $\rho \in (a,b)$.
The speed is
\begin{equation}\label{e:cpsi}
c_{\psi}=\dfrac{f(a)-f(b)}{b-a},
\end{equation}
$\psi \in C^1(I)$ with $I$ as in \eqref{e:defI}, $\psi^{\, \prime}(\xi) >0$ for $\xi \in I$ and $\psi$  satisfies \emph{(i)}--\emph{(iii)}. The function $\phi(\xi):=b-(b-a)\psi(\xi)$, $\xi \in \R$, satisfies \eqref{e:decr} and $\phi \in C^2(J)$; $\phi$ is the  profile of a wavefront solution of \eqref{e:E} with speed \eqref{eq:speed}, $\phi^{\, \prime}(\xi)<0$ for $\xi \in J$, and properties \emph{(i)}--\emph{(iii)} hold true.

\smallskip
\noindent \emph{(b2)} \ We denote once more
\begin{equation}\label{e:GE4}
E(r):=-D\left(b-(b-a)r\right), \qquad g(r):=\dfrac{f\left(b-(b -a)r\right)-f(b)}{b-a}, \qquad r\in[0,1].
\end{equation}
Then $E>0$ in $(0,1)$, $g(0)=0$ and condition $g(r)<g(1)r$, $r\in (0,1)$, is equivalent to $f(\rho)<s(\rho)$, $\rho \in (a,b)$. By \emph{(a2)}, equation \eqref{e:rtR} with $g$ and $E$ as in \eqref{e:GE4} has a wavefront with profile $\psi$ and speed \eqref{e:cpsi} if and only if $f(\rho)<s(\rho)$, $\rho \in (a,b)$; moreover, $\psi \in C^2(I)$ with $I$ as in \eqref{e:defI}, condition \eqref{e:psi1} holds true and $\psi$ satisfies  \emph{(i)}--\emph{(iii)}. The function
$\phi(\xi):=b-(b -a)\psi(\xi)$, $\xi \in \R$, satisfies \eqref{e:incr}, $\phi \in C^2(J)$, $\phi^{\, \prime}(\xi)>0$ for $\xi \in J$, and it is the profile of a wavefront solution of \eqref{e:E} with speed \eqref{eq:speed}; properties \emph{(i)}--\emph{(iii)} hold true.
\end{proof}

\begin{example}Several examples fall in the framework of Theorem \ref{t:wfsConst}; we refer in the following to the notation in {\rm (fcm)} introduced in Section \ref{sec:ex}.

For instance, consider $v(\rho)=1-\rho$; then $f$ is strictly concave. If $D$ is given by \eqref{e:DN}, then $D(\rho) = \rho(\delta-\tau\rho)$ and {\em (a1)} applies in $(0,1)$ if $\frac\delta\tau =v^s\ge1$; notice that \eqref{e:vvs} holds in this case. If $D$ is given by \eqref{e:DT} then $D(\rho)= \delta\rho$ and the same result holds true. Therefore case {\em (a1)} applies in $(0,1)$.

Under the Kladek law $\eqref{e:2v}_1$ we pointed out in Section \ref{sec:ex} that  $f$ is strictly concave; if $D$ is as in \eqref{e:DT}, then $D(0)=0$ and $D(\rho)>0$ if $\rho\in(0,1]$. Case {\em (a1)} applies in $(0,1)$.

At last consider $v(\rho)=\min\left\{1,-c\log\rho\right\}$ for $c>0$, see \cite{Nelson_2000}.
In the interval $I=(e^{-1/c}, 1)$ the function $f$ is strictly concave. If $D$ is given by \eqref{e:DN}, then $D(\rho)= c(\delta-\tau c)$ in $I$ and hence $D\gtrless0$ in $I$ if $\delta/\tau\gtrless c$; condition \eqref{e:vvs} holds if $\delta/\tau=v^s\ge1$. Case {\em (a1)} ({\em (b1)}, respectively) applies in $I$.
\end{example}
%%%%%%%%%%%%%%%%%%%%%%%%%%
%%%%%%%%%%%%%%%%%%%%%%%%%%
\section{Proofs}\label{s:proof}
\setcounter{equation}{0}

\begin{proofof}{Lemma \ref{l:lemma1}}
Assume that \eqref{e:E} has a wavefront solution with profile $\phi$ satisfying \eqref{e:infty}. We already noticed that $\phi$ satisfies \eqref{e:xi0} with no loss of generality. Let $\oxi\ge 0$ be such that $\phi(\oxi)=\alpha$ and $\phi(\xi)>\alpha$ if $\xi>\oxi$. First, we prove
\begin{equation}\label{e:limiti}
\emph{(i)} \quad \lim_{\xi \to 0^-}D\left(\phi(\xi)\right)\phi^{\, \prime}(\xi)=0, \hskip 1.5 cm \emph{(ii)} \quad \lim_{\xi \to \oxi\,^+}D\left(\phi(\xi)\right)\phi^{\, \prime}(\xi)=0.
\end{equation}
The reasoning is slightly different according to $\oxi>0$ or $\oxi=0$; we begin with the case $\oxi>0$.

Consider $h>0$ such that $h<\xi_1$ and $\phi(\xi)>0$ for $\xi \in (-h, h)$; this is possible because $\alpha\in(0,1)$. Let $\psi \in C^{\infty}_0(-h,h)$. Since $\phi$ is a solution of \eqref{e:ODE} (see Definition \ref{d:tws}) we obtain
\begin{align*}
0&=\int_{-h}^{h}\left( D\left(\phi(\xi)\right)\phi^{\, \prime}(\xi) - f\left(\phi(\xi)\right) + c\phi(\xi) \right)\psi'(\xi)\,d\xi\\
&=\displaystyle{\lim_{\epsilon \to 0^+}}\int_{-h}^{-\epsilon}\left( D\left(\phi(\xi)\right)\phi^{\, \prime}(\xi) - f\left(\phi(\xi)\right) + c\phi(\xi) \right)\psi'(\xi)\,d\xi+\left( f(\alpha)-c\alpha\right)\psi(0).
\end{align*}
 The function $D(\phi)\phi^{\, \prime}$ is continuous in every interval $(-h, -\epsilon)$ with $0<\epsilon<h$, and $\phi$ satisfies there equation \eqref{e:ODE}  (see Remark \ref{r:ac}); hence
\begin{align*}
0  =& \displaystyle{\lim_{\epsilon \to 0^+}}\int_{-h}^{-\epsilon}\left( D\left(\phi(\xi)\right)\phi^{\, \prime}(\xi) - f\left(\phi(\xi)\right) + c\phi(\xi) \right)\psi'(\xi)\,d\xi+\left( f(\alpha)-c\alpha\right)\psi(0)
\\
= & \displaystyle{\lim_{\epsilon \to 0^+}} \left( D\left(\phi(-\epsilon)\right)\phi^{\, \prime}(-\epsilon)- f\left(\phi(-\epsilon )\right) + c\phi(-\epsilon) \right)\psi(-\epsilon)
+\left( f(\alpha)-c\alpha\right)\psi(0).
%\\
%&+\left( f(\alpha)-c\alpha\right)\psi(0).
\end{align*}
Since we may assume $\psi(0)\ne 0$, by the continuity of $f$ and $\phi$ we obtain condition \eqref{e:limiti}\emph{(i)}.
Similarly, one can prove \eqref{e:limiti}\emph{(ii)} when $\oxi>0$.

Now, we consider the case $\oxi=0$. Choose $h>0$ such that $0<\phi(\xi)<1$ for $\xi \in (-h, h)$ and
take $\psi \in C^{\infty}_0(-h, h)$. Again by Definition \ref{d:tws} we obtain
\begin{align*}
0=&\int_{-h}^{h}\left( D\left(\phi(\xi)\right)\phi^{\, \prime}(\xi) - f\left(\phi(\xi)\right) + c\phi(\xi) \right)\psi'(\xi)\,d\xi\\
=&\displaystyle{\lim_{\epsilon \to 0^+}}\int_{-h}^{-\epsilon}\left( D\left(\phi(\xi)\right)\phi^{\, \prime}(\xi) - f\left(\phi(\xi)\right) + c\phi(\xi) \right)\psi'(\xi)\,d\xi\\
&+\displaystyle{\lim_{\epsilon \to 0^+}}\int_{\epsilon}^{h}\left( D\left(\phi(\xi)\right)\phi^{\, \prime}(\xi) - f\left(\phi(\xi)\right) + c\phi(\xi) \right)\psi'(\xi)\,d\xi.
\end{align*}
For $0<\epsilon<h$, by the regularity of $\phi$ in both $(-h, -\epsilon)$ and $(\epsilon, h)$ (see Remark \ref{r:ac}) we obtain
\begin{align*}
0 = &\displaystyle{\lim_{\epsilon \to 0^+}}\int_{-h}^{-\epsilon}\left( D\left(\phi(\xi)\right)\phi^{\, \prime}(\xi) - f\left(\phi(\xi)\right) + c\phi(\xi) \right)\psi'(\xi)\,d\xi
\\
&+\displaystyle{\lim_{\epsilon \to 0^+}}\int_{\epsilon}^{h}\left( D\left(\phi(\xi)\right)\phi^{\, \prime}(\xi) - f\left(\phi(\xi)\right) + c\phi(\xi) \right)\psi'(\xi)\,d\xi\\
=&\displaystyle{\lim_{\epsilon \to 0^+}} \left( D\left(\phi(-\epsilon)\right)\phi^{\, \prime}(-\epsilon)- f\left(\phi(-\epsilon )\right) + c\phi(-\epsilon) \right)\psi(-\epsilon)
\\
&-\displaystyle{\lim_{\epsilon \to 0^+}} \left( D\left(\phi(\epsilon)\right)\phi^{\, \prime}(\epsilon)- f\left(\phi(\epsilon )\right) + c\phi(\epsilon) \right)\psi(\epsilon)
\\
= &\displaystyle{\lim_{\epsilon \to 0^+}} \left( D\left(\phi(-\epsilon)\right)\phi^{\, \prime}(-\epsilon)\right)\psi(-\epsilon)  - \displaystyle{\lim_{\epsilon \to 0^+}}\left( D\left(\phi(\epsilon)\right)\phi^{\, \prime}(\epsilon)\right)\psi(\epsilon).
\end{align*}
The above expression must be satisfied in particular when $\psi(0)\ne 0$. Notice, moreover, that  $\phi^{\, \prime}\ge 0$ in $(-h,h)\setminus\{0\}$ and $D$ changes sign in $\alpha$; hence both \eqref{e:limiti}\emph{(i)} and
 \eqref{e:limiti}\emph{(ii)} are satisfied. This completely proves \eqref{e:limiti}.

\smallskip

Let $\phi$ be as in the first part of this proof and take $\xi_1>0$; the case $\xi_1=0$ is proved analogously. We claim that  the function
\begin{equation*}%\label{e:prophi1}
\phi_1(\xi)=\left\{
\begin{array}{rl}
\phi(\xi) \quad &\hbox{ if }\xi\le 0,\\
\alpha \quad &\hbox{ if }\xi \in (0, \xi_1),\\
\phi(\xi+\oxi-\xi_1) \quad &\hbox{ if }\xi\ge \xi_1,
\end{array}
\right.
\end{equation*}
is the profile of a wavefront to \eqref{e:E}.

As above, fix $h \in (0, \xi_1)$ such that $\phi(\xi)>0$ for $\xi \in (-h, 0)$ and consider $\psi \in C_0^{\infty}(-h, h)$. By condition \eqref{e:limiti}\emph{(i)} we obtain
\begin{align*}
&\ds\int_{-h}^{h}\left(D\left(\phi_1(\xi)\right)\phi_1'(\xi) - f\left(\phi_1(\xi)\right) + c\phi_1(\xi) \right)\psi'(\xi)\,d\xi
\\
=&\ds\int_{-h}^{0}\left(D\left(\phi(\xi)\right)\phi'(\xi) - f\left(\phi(\xi)\right) + c\phi(\xi) \right)\psi'(\xi)\,d\xi +\left(f(\alpha) - c\alpha\right)\psi(0)\\
=&\displaystyle{\lim_{\epsilon \to 0^+}}\int_{-h}^{-\epsilon}\left( D\left(\phi(\xi)\right)\phi^{\, \prime}(\xi) - f\left(\phi(\xi)\right) + c\phi(\xi) \right)\psi'(\xi)\,d\xi+\left(f(\alpha) - c\alpha\right)\psi(0)\\
=&\displaystyle{\lim_{\epsilon \to 0^+}} \left( D\left(\phi(-\epsilon)\right)\phi^{\, \prime}(-\epsilon)- f\left(\phi(-\epsilon )\right) + c\phi(-\epsilon) \right)\psi(-\epsilon)+\left(f(\alpha) - c\alpha\right)\psi(0)
=\,0.
\end{align*}
Similarly, by condition \eqref{e:limiti}\emph{(ii)} we obtain
\begin{align*}
&\ds\int_{\xi_1-h}^{\xi_1+h}\left(D\left(\phi_1(\xi)\right)\phi_1'(\xi) - f\left(\phi_1(\xi)\right) + c\phi_1(\xi) \right)\psi'(\xi)\,d\xi=0,
\end{align*}
for $h\in (0, \xi_1)$  with $\phi(\oxi+h)<1$ and $\psi\in C_0^{\infty}(\xi_1-h, \xi_1+h)$.  Hence, $\phi_1$ is the profile of a wavefront of \eqref{e:E} with $\phi_1(\pm\infty)=\ell^\pm$.
\end{proofof}

\begin{proofof}{Theorem \ref{t:main}} Assume that there exists a profile $\phi$ satisfying \eqref{e:infty} with speed $c$; by Lemma \ref{l:lemma1}, without loss of generality we may assume that it satisfies \eqref{e:xi0xi1=0}. We have to show that $c=c_{\ell^{\pm}}$, i.e., $c$ satisfies \eqref{e:existence wfs}, and that conditions \eqref{e:existence about f} hold true. Consider
\begin{equation*}%\label{e:prophi1}
\phi_1(\xi)=\left\{
\begin{array}{rl}
\phi(\xi) \quad &\hbox{ if }\xi\le 0,\\
\alpha \quad &\hbox{ if }\xi>0.
\end{array}
\right.
\end{equation*}
With a reasoning as in the proof of Lemma \ref{l:lemma1}, by condition \eqref{e:limiti}\emph{(i)} we deduce that $\phi_1$ is the profile of a wavefront to \eqref{e:E}.
By Theorem \ref{t:wfsConst}\emph{(a1)} the speed of $\phi_1$ is
$$
c_1:=\frac{f(\alpha)-f(\ell^-)}{\alpha-\ell^-},
$$
and condition \eqref{e:existence about f}$_1$ is satisfied; then, $c=c_1$. Similarly, the function
\begin{equation*}
\phi_2(\xi)=\left\{
\begin{array}{rl}
\alpha \quad &\hbox{ if }\xi<0,\\
\phi(\xi) \quad &\hbox{ if }\xi\ge 0,
\end{array}
\right.
\end{equation*}
is the profile of a wavefront of \eqref{e:E} with $\phi_2(-\infty)=\alpha$ and $\phi_2(\infty)= \ell^+$; hence,  by Theorem \eqref{t:wfsConst}\emph{(b2)}, its speed is
$$
c_2:=\frac{f(\ell^+)-f(\alpha)}{\ell^+-\alpha}
$$
and condition \eqref{e:existence about f}$_2$ is satisfied; moreover, $c=c_2$. We deduce $c=c_1=c_2$ and so condition \eqref{e:existence wfs} is satisfied. Therefore we proved that, if $\phi$ exists, then \eqref{e:existence wfs} and \eqref{e:existence about f} are satisfied.

\smallskip
Conversely, assume that \eqref{e:existence wfs} and \eqref{e:existence about f}  are satisfied. Equation \eqref{e:E} with $\rho \in [\ell^-, \alpha]$ satisfies assumption {\em (a1)} of Theorem \ref{t:wfsConst}. Hence, it has a wavefront solution with profile $\phi^-$ from $\ell^-$ to $\alpha$, speed $c^-$ and $(\phi^-)'(\xi)>0$ for $\ell^-<\phi^-(\xi)<\alpha$. Moreover, $\phi^-$ is sharp in $\alpha$, since $D(\alpha)=0$; it is also sharp in $\ell^-$ if and only if $\ell^-=0= D(0)$. Let $\xi_0\in \R$ be such that $\phi^-(\xi_0)=\alpha$ and $\phi^-(\xi)<\alpha$ for $\xi<\xi_0$; we may assume $\xi_0=0$. By Theorem \ref{t:wfsConst}\emph{(ii)} we deduce
\begin{equation}\label{e:C1}
\lim_{\xi \to 0^-}D\left(\phi^-(\xi)\right)(\phi^-)'(\xi)=0.
\end{equation}

\smallskip
\noindent  Equation \eqref{e:E} with $\rho \in [\alpha, \ell^+]$ satisfies assumption {\em (b2)} of Theorem \ref{t:wfsConst}; therefore, it has a wavefront solution with profile $\phi^+$ from $\alpha$ to $\ell^+$, speed $c^+$ and $(\phi^+)'(\xi)>0$ for $\alpha<\phi^+(\xi)$; moreover $\phi^+$ is sharp at $\alpha$. As above, we can assume $\phi^+(0)=\alpha$ with $\phi^+(\xi)>\alpha$ for $\xi>0$. Again by Theorem \ref{t:wfsConst}\emph{(i)} we have
\begin{equation}\label{e:C2}
\lim_{\xi \to 0^+}D\left(\phi^+(\xi)\right)(\phi^+)'(\xi)=0.
\end{equation}

We consider now the function
\begin{equation*}
\phi(\xi):=\left\{
\begin{array}{rl}
\phi^-(\xi) & \hbox{ if }\xi\in (-\infty, 0],\\
\phi^+(\xi) & \hbox{ if }\xi\in (0, +\infty).
\end{array}\right.
\end{equation*}
By the properties of $\phi^\pm$ and conditions \eqref{e:C1}-\eqref{e:C2}, we have that $\phi$ is a solution of \eqref{e:ODE} on the whole real line (see Definition \ref{d:tws} and the reasoning in the first part of this proof) with same $c=c_{\ell^{\pm}}$. Hence $\phi$ is the profile of a wavefront of \eqref{e:E} and satisfies all the required properties (the sharpness of $\phi$ has been discussed above, the strict monotonicity follows as well); in particular its speed is $c_{\ell^{\pm}}$.

\smallskip

Now, we prove \eqref{e:slope}. By \eqref{e:existence about f} we have
\[
\frac{f(\rho)-f(\alpha)}{\rho-\alpha}<\frac{f(\alpha)-f(\ell^-)}{\alpha-\ell^-},\quad \hbox{ for } \rho\in (\ell^-,\alpha).
\]
Then, we obtain \eqref{e:slope} by passing to the limit when $\rho \to \alpha^-$.

\smallskip

At last, we prove \eqref{e:slopeLR}. Let
\begin{equation*}
\xi_{\ell^-}:=\inf\left\{\xi\in \mathbb{R} \, : \, \phi(\xi)>\ell^-\right\}.
 \end{equation*}
Notice that $\xi_{\ell^-}\in \mathbb{R}$ if and only if $\ell^-=0=D(0)$, while $\xi_{\ell^-}=-\infty$ otherwise. Since $\phi$ coincides with the profile $\phi^-$ defined above when $\xi<0$, by Theorem \ref{t:wfsConst} {\em (i), (iii)} we obtain
\begin{equation}\label{e:in-}
\lim_{\xi \to \xi_{\ell^-} }D\left(\phi(\xi)\right)\phi^{\, \prime}(\xi)=0.
\end{equation}
 By integrating equation \eqref{e:ODE} in an interval $[\xi_0, \xi]\subset (\xi_{\ell^-} , 0)$, we have, by Remark \ref{r:ac},
\begin{equation*}
D\left(\phi(\xi)\right)\phi^{\, \prime}(\xi)-D\left(\phi(\xi_0)\right)\phi^{\, \prime}(\xi_0)+c_{\ell_{\pm}}\left( \phi(\xi)-\phi(\xi_0)\right)-f\left(\phi(\xi)\right)+f\left(\phi(\xi_0)\right)=0.
\end{equation*}
Hence, by passing to the limit when $\xi_0\to \xi_{\ell^-} $, by \eqref{e:in-} we obtain
\begin{equation}\label{e:phid}
\phi^{\, \prime}(\xi)=\frac{f\left(\phi(\xi)\right)-f(\ell^-)-c_{\ell_{\pm}}\left( \phi(\xi)-\ell^-\right)}{D\left(\phi(\xi)\right)}, \quad \xi \in (\xi_{\ell^-}, 0).
\end{equation}
In particular this shows that $\phi'(\xi)>0$ if $\xi \in (\xi_{\ell^-}, 0)$. By de L'Hospital rule, we obtain
\begin{align}\label{e:HH}
\displaystyle{\lim_{\xi \to 0^-}}\phi^{\, \prime}(\xi)&=\displaystyle{\lim_{\sigma \to \alpha^-}}\frac{f(\sigma)-f(\ell^-)-c_{\ell_{\pm}}\left(\sigma-\ell^-\right)}{D(\sigma)}
=\displaystyle{\lim_{\sigma \to \alpha^-}}\frac{f^{\, \prime}(\sigma)-c_{\ell_{\pm}}}{D^{\, \prime}(\sigma)},
\end{align}
whence $\phi^{\prime}_-(0)$ satisfies \eqref{e:slopeLR}. Since the reasoning for $\phi^{\prime}_+(0)$ is similar, this completely proves \eqref{e:slopeLR}. Notice that by \eqref{e:HH} we immediately deduce \eqref{e:00}. The proof is complete.
\end{proofof}

\begin{example}
The proof of Theorem \ref{t:main} shows how to deal with similar cases. Consider for instance the velocity in $\eqref{e:2v}_2$ with $0<a<1$, $f$ as in {\rm (fcm)} in Section \ref{sec:ex}, $D$ given by \eqref{e:DN}, and assume \eqref{e:vvs}. Here, it is convenient to use the notation $v^s=\delta/\tau$ and $w:=\ov/v^s\in(0,1]$. First, we claim that if
\begin{equation}\label{e:PD0}
\gamma >\frac{1-a}{aw}\quad \hbox{ and } \quad  \gamma \ge\frac{1+a}{a},
\end{equation}
then there is $\beta \in (a,1)$ such that  $D<0$ in $(a, \beta)$ and $D>0$ in $(\beta, 1)$.

To prove the claim, first notice that $\eqref{e:PD0}_2$ implies $\gamma>2$ because $0< a<1$. Moreover,
\begin{equation}\label{e:DLu}
D(\rho) = \frac{\tau\overline v^2 \gamma^2 (1-a)^2}{(1-\rho)^4}\rho^2 e^{\gamma\frac{a-\rho}{1-\rho}}h(\rho), \quad \rho\in (a,1),
\end{equation}
for
\[
h(\rho):=\frac{1}{\gamma w (1-a)}\frac{(1-\rho)^2}{\rho}- e^{\gamma\frac{a-\rho}{1-\rho}},\quad \rho\in (a,1).
\]
We pointed out in Section \ref{sec:ex} that $h(\rho)>0$ in a left neighborhood of $1$; moreover
$\eqref{e:PD0}_1$ is equivalent to $h(a)<0$. Then, there is  $\beta \in (a,1)$ such that $h(\beta)=0$, implying
$$
\frac{1}{\gamma w(1-a)}\frac{(1-\beta)^2}{\beta}=e^{\gamma\frac{a-\beta}{1-\beta}}.
$$
We compute $h'(\beta)=\frac{1}{ \gamma w(1-a)\beta^2}\,\psi(\beta)$, for
$\psi(\rho):=\rho^2 +\gamma(1-a)\rho-1$ and $\rho \in (a,1)$.
By $\eqref{e:PD0}_2$ we deduce $\psi(a) = (1-\gamma)a^2+\gamma a -1\ge 0$. Moreover $\psi'(\rho)>0$ for $\rho>0$, and so $\psi(\beta)>0$ whatever $\beta \in (a,1)$ is. Then $h'(\beta)>0$ and so $\beta$ is unique. This proves our claim.

Notice that the sign of $D$ in \eqref{e:DLu} is {\em opposite} to that in condition {\rm (D1)}, so Theorem \ref{t:main} cannot apply as it is. However, we can \lq\lq paste\rq\rq\ the profiles of cases (b1) and (a2) in Theorem \ref{t:wfsConst} as in the proof of Theorem \ref{t:main}, and obtain a {\em decreasing} profile. More precisely, assume $a\gamma <2$; it is always possible to choose $a$ satisfying this condition and $\eqref{e:PD0}_2$ because $a<1$. Then $f(\rho)$ is concave in $(a, \tilde\rho)$ and convex in $(\tilde\rho,1)$, see the discussion preceding \eqref{e:tilderho}. If conditions \eqref{e:existence about f} are satisfied with $\beta$ replacing $\alpha$ and $a\le\ell^-<\beta <\ell^+\le 1$, then equation \eqref{e:E} admits a wavefront solution with profile $\varphi$ satisfying $\varphi(-\infty)=\ell^+$ and $\varphi(-\infty)=\ell^-$.
\end{example}
%%%
%%%
In the proof of Theorem \ref{t:conv left} we shall need the following result. We previously observe that a byproduct of the proof of Lemma \ref{l:lemma1} is formula \eqref{e:phid}; for profiles $\phi_\eps$ of traveling-wave solutions of \eqref{e:Eepsilon} it can be written as
\begin{equation}\label{e:deriv}
\phi^{\, \prime}_{\epsilon}(\xi)=\frac{f\left(\phi_{\epsilon}(\xi)\right)-f(\ell^-)-c_{\ell_{\pm}}\left( \phi_{\epsilon}(\xi)-\ell^-\right)}{{\epsilon}D\left(\phi_\epsilon(\xi)\right)},
\end{equation}
for every $\xi$ such that $0<\phi_{\epsilon}(\xi)<1$ and $\phi_{\epsilon}(\xi)\ne\alpha$.

\begin{lemma}\label{l:forTHCL}
Under the assumptions of Theorem \ref{t:conv left} we have
\begin{equation}\label{e:eps1eps2}
\phi_{\epsilon_1}(\xi)\le \phi_{\epsilon_2}(\xi),\quad \hbox{ for every }\quad  \xi <0,\ \epsilon_1< \epsilon_2.
\end{equation}
Moreover, we have $\phi_{\epsilon_1}(\xi_0)= \phi_{\epsilon_2}(\xi_0)$ for some $\xi_0<0$ if and only if $\phi_{\epsilon_1}(\xi)=\phi_{\epsilon_2}(\xi)=0$ for every $\xi<\xi_0$.
\end{lemma}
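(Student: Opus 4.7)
\begin{proofof}{Lemma \ref{l:forTHCL}}
My plan is to show that all profiles $\phi_\eps$ are rescalings of a single profile, namely $\phi_\eps(\xi)=\Phi(\xi/\eps)$ for a fixed $\Phi$, and then to read off both conclusions from the monotonicity of $\Phi$. The key algebraic observation is that on the left half-line the ODE \eqref{e:deriv} takes the form $\phi_\eps'=G(\phi_\eps)/\eps$, with $\eps$ appearing only as a scalar multiplicative factor; this already strongly suggests a rescaling in the independent variable.

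\smallskip

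To implement this, I would first rewrite \eqref{e:deriv} by setting $F(\rho):=f(\rho)-f(\ell^-)-c_{\ell^\pm}(\rho-\ell^-)$; identity \eqref{e:existence wfs} yields $F(\rho)=f(\rho)-f(\alpha)-c_{\ell^\pm}(\rho-\alpha)$, which by \eqref{e:existence about f}$_1$ is strictly positive on $(\ell^-,\alpha)$. Combined with $D>0$ on $(0,\alpha)$ (from (D1)) and with the fact that \eqref{e:xi0xi1=0} forces $\ell^-\le\phi_\eps(\xi)<\alpha$ for $\xi<0$, this rewrites \eqref{e:deriv} as $\phi_\eps'=G(\phi_\eps)/\eps$ with $G:=F/D>0$ on $(\max\{\ell^-,0\},\alpha)$. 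A direct change of variables then shows that $\Phi_\eps(\eta):=\phi_\eps(\eps\eta)$ solves the $\eps$-independent equation $\Phi_\eps'=G(\Phi_\eps)$, is itself the profile of a wavefront for \eqref{e:E} with $\eps=1$, and still satisfies $\Phi_\eps(\pm\infty)=\ell^\pm$ together with the normalization \eqref{e:xi0xi1=0} (since multiplication of the variable by $\eps>0$ maps $(-\infty,0)$ and $(0,+\infty)$ to themselves). The uniqueness part of Theorem \ref{t:main} then gives $\Phi_\eps\equiv\Phi$ independent of $\eps$, whence the scaling identity $\phi_\eps(\xi)=\Phi(\xi/\eps)$ for all $\xi\in\R$.

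\smallskip

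With this in hand, both claims reduce to monotonicity of $\Phi$ on $(-\infty,0]$. From Theorems \ref{t:main} and \ref{t:wfsConst} I know that $\Phi$ is strictly increasing on an interval $(\eta_\star,0]$ and identically equal to $\ell^-$ on $(-\infty,\eta_\star]$, where $\eta_\star\in\R$ precisely in the sharp case $\ell^-=0=D(0)$, and $\eta_\star=-\infty$ otherwise. For $\xi<0$ and $\eps_1<\eps_2$ one has $\xi/\eps_1<\xi/\eps_2<0$, so monotonicity gives $\phi_{\eps_1}(\xi)=\Phi(\xi/\eps_1)\le\Phi(\xi/\eps_2)=\phi_{\eps_2}(\xi)$, which is \eqref{e:eps1eps2}. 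For the biconditional, equality at $\xi_0<0$ forces $\Phi$ to be constant on $[\xi_0/\eps_1,\xi_0/\eps_2]$; strict monotonicity on $(\eta_\star,0)$ then forces this interval to lie in $(-\infty,\eta_\star]$, i.e.\ $\xi_0/\eps_2\le\eta_\star$ with $\Phi\equiv 0$ there, which immediately propagates to $\phi_{\eps_j}(\xi)=\Phi(\xi/\eps_j)=0$ for every $\xi\le\xi_0$ and $j=1,2$. The converse is automatic from continuity. The only step I expect to require genuine care is the verification that $\Phi_\eps$ falls within the scope of the uniqueness statement of Theorem \ref{t:main}, in particular that the rescaling preserves \eqref{e:xi0xi1=0} and produces a bona fide weak solution in the sense of Definition \ref{d:tws}; the rest is bookkeeping with monotone functions.
\end{proofof}
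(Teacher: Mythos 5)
Your proof is correct, and it takes a genuinely different --- and considerably shorter --- route than the paper's. The paper establishes \eqref{e:eps1eps2} by a direct comparison argument: it splits into the cases $f'(\alpha)<c_{\ell^\pm}$ and $f'(\alpha)=c_{\ell^\pm}$; in the first it compares the one-sided derivatives at $\xi=0$ via \eqref{e:slopeLR} (this is where the hypothesis \eqref{e:Dslope}, $D'(\alpha)<0$, is actually used) and then runs a first-crossing contradiction with the explicit ODE \eqref{e:phid}, while in the degenerate case it approximates $\phi_{\eps_1}$ by a decreasing sequence of shifted copies anchored at points $\left(\xi_n,\phi_{\eps_2}(\xi_n)\right)$ and passes to the limit with Ascoli--Arzel\`a. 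Your observation that $\eps$ enters \eqref{e:Eepsilon} only through the product $\eps D$, so that $\Phi_\eps(\eta):=\phi_\eps(\eps\eta)$ is a normalized wavefront profile of \eqref{e:E} itself, reduces everything to the uniqueness clause of Theorem \ref{t:main} plus the elementary monotonicity of a single profile $\Phi$; it dispenses with hypothesis \eqref{e:Dslope} altogether, and as a bonus the identity $\phi_\eps(\xi)=\Phi(\xi/\eps)$ yields Theorem \ref{t:conv left} on the spot, since $\xi/\eps\to\pm\infty$ for fixed $\xi\gtrless0$. What the paper's comparison argument buys is robustness: it would survive perturbations in which the small parameter does not enter as a pure multiplicative factor (say an $\eps$-dependent diffusivity that is not a rescaling of a fixed $D$), whereas your argument is tied to the exact scaling structure of \eqref{e:Eepsilon}. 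The one step you flag --- that the rescaling produces a bona fide weak solution in the sense of Definition \ref{d:tws} --- is indeed the only thing to verify, and it is immediate: with the test function $\tilde\psi(\eta)=\psi(\eps\eta)$ one has $\eps D\left(\phi_\eps\right)\phi_\eps'\big|_{\xi=\eps\eta}=D\left(\Phi_\eps\right)\Phi_\eps'(\eta)$, so the integral identity \eqref{e:def-tw} for $\Phi_\eps$ with speed $c_{\ell^\pm}$ follows from the corresponding identity for $\phi_\eps$ by the change of variables $\xi=\eps\eta$, and the normalization \eqref{e:xi0xi1=0} is preserved because $\eta\mapsto\eps\eta$ maps each half-line onto itself.
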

\begin{proof}
By \eqref{e:slope} we know that $f^{\, \prime}(\alpha)\le c_{\ell_{\pm}}$. For clarity, we split the proof of \eqref{e:eps1eps2} into two parts, and each of them is further split. At the end we shall prove the last claim of the statement.

\noindent\emph{(i) Assume $f^{\, \prime}(\alpha)<c_{\ell^{\pm}}$}. First, we show that \eqref{e:eps1eps2} holds in a left neighborhood of $0$ and, then, in the whole interval $(-\infty,0]$.

{\em (a)}\ By \eqref{e:slopeLR}  we have
\begin{equation}\label{e:phiineq}
\phi_{\epsilon_1}^{\, \prime}(0)=\frac{f^{\, \prime}(\alpha)-c_{\ell_{\pm}}}{\epsilon_1D^{\, \prime}(\alpha)}>\frac{f^{\, \prime}(\alpha)-c_{\ell_{\pm}}}{\epsilon_2D^{\, \prime}(\alpha)}=\phi_{\epsilon_2}^{\, \prime}(0).
\end{equation}
Notice that {\em both} conditions $D^{\, \prime}(\alpha)<0$ in \eqref{e:Dslope} and $f^{\, \prime}(\alpha)<c_{\ell^{\pm}}$ are needed to deduce \eqref{e:phiineq}. By $\phi_{\epsilon_1}(0)=\phi_{\epsilon_2}(0)=\alpha$, see \eqref{e:xi0}, we have $\phi_{\epsilon_1}(\xi)<\phi_{\epsilon_2}(\xi)$ at least for $\xi \in (\tilde\xi, 0)$, for some $\tilde\xi<0$.

\smallskip

{\em (b)}\  Assume by  contradiction that there exists $\oxi<\tilde\xi$ such that $\phi_{\epsilon_1}(\oxi)=\phi_{\epsilon_2}(\oxi)=:\eta>0$ while
\begin{equation}\label{e:est1}
\phi_{\epsilon_1}(\xi)<\phi_{\epsilon_2}(\xi)\quad \hbox{ if }\quad \xi \in (\overline{\xi}, 0).
\end{equation}
By \eqref{e:phid}, \eqref{e:existence wfs}, \eqref{e:existence about f}$_1$, $D(\eta)>0$ and $\eps_1<\eps_2$ we deduce
\begin{align*}
\phi_{\epsilon_1}^{\, \prime}(\overline{\xi}) = \frac{f(\eta)-\left( f(\alpha)+c_{\ell_{\pm}}(\eta-\alpha\right)}{\epsilon_1D(\eta)}
>\frac{f(\eta)-\left( f(\alpha)+c_{\ell_{\pm}}(\eta-\alpha\right)}{\epsilon_2D(\eta)} = \phi_{\epsilon_2}^{\, \prime}(\oxi),
\end{align*}
in contradiction with \eqref{e:est1}. This proves claim \eqref{e:eps1eps2} if $f^{\, \prime}(\alpha)<c_{\ell^{\pm}}$.

\smallskip

\noindent\emph{(ii) Assume $f^{\prime}(\alpha)=c_{\ell^{\pm}}$}. In this case, the above deduction of \eqref{e:phiineq} fails. Now, the proof of inequality (5.9) is based on the remark that the profile $\phi_{\epsilon_1}$ can be obtained as the limit of a sequence of suitably shifted functions $\phi_n$ of $\phi_{\epsilon_1}$; a further shift $\phi_0$ of $\phi_{\epsilon_1}$ is introduced to have a uniform bound from below. As in the proof of case {\em (i)}, the first items below concern the proof of \eqref{e:eps1eps2} in a left neighborhood of $0$. We refer to Figure \ref{f:LC}. 

\smallskip

{\em (a)}\ To avoid the possible degeneracy occurring if $D(0)=\ell^-=0$, fix $\sigma \in (\ell^-, \alpha)$ and denote $\phi_0(\xi):=\phi_{\epsilon_1}(\xi+\xi_0)$, where $\xi_0<0$ is such that $\phi_0(0)=\sigma$. Then, fix $\mu \in (\ell^-, \sigma)$ and let $\tau_0<0$ satisfy $\phi_0(\tau_0)=\mu$. Notice that both values $\xi_0$ and $\tau_0$ exist by Theorem \ref{t:main}, and $\phi_0^{\, \prime}(\xi)>0$ for $\xi \in [\tau_0, 0]$. By $\sigma=\phi_0(0)<\phi_{\epsilon_2}(0)=\alpha$, see \eqref{e:xi0}, and arguing as in {\em (i)(b)}, we deduce
 \begin{equation}\label{e:ccll}
 \phi_0(\xi)<\phi_{\epsilon_2}(\xi) \qquad \hbox{ for }\xi \in [\tau_0, 0].
 \end{equation}

\smallskip

{\em (b)}\ Consider a strictly increasing sequence $\{\xi_n\}\subset (\tau_0, 0)$ satisfying $\xi_n \to 0$ for $n \to \infty$ and let $\eta_n:=\phi_{\epsilon_2}(\xi_n), \, n \in \mathbb{N}$. Also the sequence $\{\eta_n\}\subset(\ell^-,\alpha)$ is strictly increasing, by Theorem \ref{t:main}. For $n \in \mathbb{N}$, let  $\phi_n$ be the solution of the initial-value problem
\begin{equation}\label{e:ephin}
\left\{
\begin{array}{l}
\phi^{\, \prime}(\xi) =\frac{f\left(\phi(\xi)\right)-f(\ell^-)-c_{\ell_{\pm}}\left( \phi(\xi)-\ell^-\right)}{{\epsilon_1}D\left(\phi(\xi)\right)},\\
\phi(\xi_n)=\eta_n.
\end{array}
\right.
\end{equation}
Clearly, we have $\phi_n(\xi)=\phi_{\eps_1}(\xi+\zeta_n)$, for suitable shifts $\zeta_n$. Reasoning as in {\em (ii)(a)} we have that $\phi_n$ is defined and strictly increasing in an interval $[\tau_n, 0]$, with $\phi_n(\tau_n)=\mu$. Notice that $\phi_0(\xi_n)<\phi_n(\xi_n) = \eta_n$ for every $n$ by \eqref{e:ccll}; hence, by the uniqueness of the solution of the equation in \eqref{e:ephin},  we deduce $\tau_n <\tau_0$ and
\begin{equation}\label{e:0lessn}
\phi_0(\xi)<\phi_n(\xi) \qquad \hbox{ for }\xi \in [\tau_0, 0] \hbox{ and } \, n \in \mathbb{N}.
\end{equation}

%%%%%%%%%%%%%%%%%%%%%%%% Figure TW
\begin{figure}[htbp]
\begin{picture}(100,135)(80,-15)
\setlength{\unitlength}{1.2pt}

\put(340,0){
\put(0,0){\vector(1,0){30}}
\put(0,0){\line(-1,0){200}}
\put(30,8){\makebox(0,0){$\xi$}}
\put(0,0){\vector(0,1){110}}
\put(-3,110){\makebox(0,0)[r]{$\phi$}}

\put(-2,8){\line(1,0){4}}
\put(4,9){\makebox(0,0)[lt]{$\ell^-$}}
\put(-200,8){\line(1,0){80}}

\put(-2,102){\line(1,0){4}}
\put(4,100){\makebox(0,0)[lb]{$\ell^+$}}

\put(4,80){\makebox(0,0)[l]{$\alpha$}}
\put(4,30){\makebox(0,0)[l]{$\sigma$}}

\put(0,0){\thicklines{\qbezier(-200,15)(-60,40)(0,80)}}
\put(-140,32){\makebox(0,0)[b]{$\phi_{\eps_2}$}}

\put(0,0){\thicklines{\qbezier(-200,10)(-60,20)(0,30)}}
\put(-20,24){\makebox(0,0)[t]{$\phi_0$}}

\put(0,0){\qbezier(-90,25)(-40,38)(0,80)}
\put(-50,37){\makebox(0,0)[t]{$\phi_{\eps_1}$}}

\put(0,0){\thicklines{\qbezier(-100,25)(-50,40)(-10,90)}}
\put(-12,92){\makebox(0,0)[b]{$\phi_{n+1}$}}

\put(0,0){\thicklines{\qbezier(-120,25)(-70,40)(-30,90)}}
\put(-32,92){\makebox(0,0)[b]{$\phi_{n}$}}

\multiput(-41,0)(0,5){12}{$.$}
\put(-30,-3){\makebox(0,0)[t]{$\xi_{n+1}$}}
\multiput(-1,58)(-5,0){9}{$.$}
\put(4,58){\makebox(0,0)[l]{$\eta_{n+1}$}}

\multiput(-81,0)(0,5){9}{$.$}
\put(-81,-3){\makebox(0,0)[t]{$\xi_{n}$}}
\multiput(-1,43)(-5,0){17}{$.$}
\put(4,43){\makebox(0,0)[l]{$\eta_{n}$}}

\multiput(-201,0)(0,5){2}{$.$}
\put(-201,-3){\makebox(0,0)[t]{$\tau_0$}}
\multiput(-1,9)(-5,0){40}{$.$}
\put(4,10){\makebox(0,0)[lb]{$\mu$}}

}

\end{picture}
\caption{\label{f:LC}{For the proof of Lemma \ref{l:forTHCL}.}}
\end{figure}
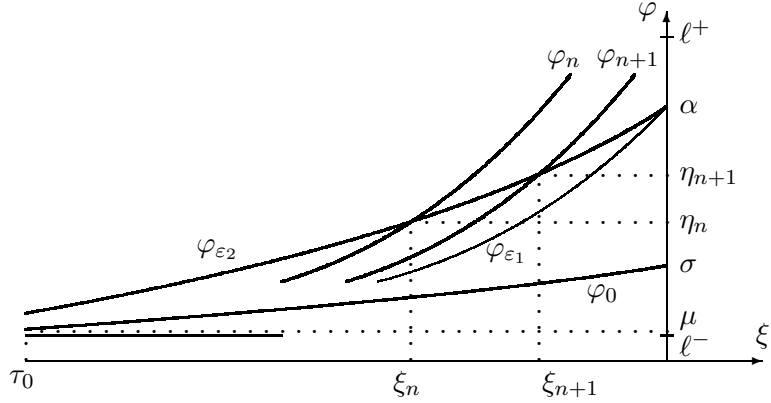
%%%%%%%%%%%%%%%%%%%%%%%%%%%%% End Figure TW

\smallskip

{\em (c)} We claim that
\begin{equation}\label{e:claim1n}
\phi_n(\xi)<\phi_{\epsilon_2}(\xi) \qquad \hbox{ for }\xi \in [\tau_0, \xi_n) \hbox{ and } \, n \in \mathbb{N}.
\end{equation}
In fact, as in case {\em (i)(a)} with $\xi_n$ replacing $\overline\xi$, by \eqref{e:existence about f}$_1$, \eqref{e:deriv} and $D(\eta_n)>0$ we obtain
\begin{equation*}
0<\phi_{\epsilon_2}^{\, \prime}(\xi_n)=\frac{f\left(\eta_n\right)-f(\ell^-)-c_{\ell_{\pm}}\left( \eta_n-\ell^-\right)}{{\epsilon_2}D\left(\eta_n\right)}<\frac{f\left(\eta_n\right)-f(\ell^-)-c_{\ell_{\pm}}\left( \eta_n-\ell^-\right)}{{\epsilon_1}D\left(\eta_n\right)}=\phi_n^{\, \prime}(\xi_n).
\end{equation*}
Then $\phi_n(\xi)<\phi_{\epsilon_2}(\xi)$ in a left neighborhood of $\xi_n$; one shows \eqref{e:claim1n} by arguing as in case {\em (i)(b)}.

\smallskip

{\em (d)} We denote
\begin{equation*}
\hat \phi_n(\xi)=\left\{
\begin{array}{rl}
\phi_n(\xi) & \text{ if } \tau_0<\xi\le \xi_n,
\\[2mm]
\phi_{\epsilon_2}(\xi) & \text{ if }  \xi_n<\xi\le 0,
\end{array}
\right.
\quad \hbox{ and } \quad \hat \phi(\xi)=\inf_{n \in \mathbb{N}} \hat\phi_n(\xi), \qquad \hbox{ for }\xi \in [\tau_0, 0].
\end{equation*}
We claim that the sequence $\{\hat \phi_n\}$ is decreasing in $[\tau_0, 0]$. In the interval $[\xi_{n+1},0]$ we have $\phi_{n+1}=\phi_n=\phi_{\eps_2}$. In $(\xi_n, \xi_{n+1})$ we have $\phi_{n+1} <\phi_n=\phi_{\eps_2}$ by \eqref{e:claim1n}. In $[\tau_0,\xi_n)$ we still have $\phi_{n+1} <\phi_n$ because both of them are shifts of the same profile and are strictly increasing in $[\tau_0,0]$ by {\em (ii)(b)}. Then $\{\hat \phi_n\}$ is decreasing in $[\tau_0, 0]$ and by \eqref{e:ccll}, \eqref{e:0lessn} we have
\begin{equation*}
\phi_0(\xi)\le \hat \phi(\xi)= \displaystyle{\lim_{n\to\infty}}\hat\phi_n(\xi) \qquad \hbox{ for }\xi \in [\tau_0, 0].
\end{equation*}

\smallskip

{\em (e)} Now we prove
\begin{equation}\label{e:phiphi}
\hat \phi=\phi_{\epsilon_1}\quad \hbox{ in } [\tau_0, 0].
\end{equation}
Equation \eqref{e:phiphi} is obviously satisfied in $\xi=0$. Notice that by \eqref{e:existence wfs} and \eqref{e:existence about f}$_1$ we have
 \begin{equation*}%\label{e:psiro}
 \psi(\rho):=\frac{f(\rho)-f(\ell^-)-c_{\ell_{\pm}}\left( \rho-\ell^-\right)}{{\epsilon_1}D(\rho)}>0 \qquad \hbox{ for }\rho \in(\ell^-, \alpha).
 \end{equation*} 
Consider an interval $[a, b]\subset [\tau_0, 0)$; since $\psi\in C^1([a,b])$, we deduce that $\{\hat\phi_n\}$ is equicontinuous and then relatively compact in $[a,b]$ by Ascoli-Arzel\`{a} Theorem. We can then extract a subsequence, which is denoted as usual as the whole sequence, such that $\hat \phi_n\to\hat \phi$ uniformly in $[a,b]$. This implies that also the sequence
\begin{equation*}
\left\{\frac{f\left(\hat \phi_n(\xi)\right)-f(\ell^-)-c_{\ell_{\pm}}\left( \hat \phi_n(\xi)-\ell^-\right)}{{\epsilon_1}D\left(\hat \phi_n(\xi)\right)}\right\}
\end{equation*}
is uniformly convergent  in $[a,b]$. Choose $N$ large enough in such a way that $\xi_n >b$ for $n\ge N$; then $\hat \phi_n = \phi_n$ in $[a,b]$ for $n\ge N$. By passing to the limit in the identity
 \begin{equation*}
\hat \phi_n(b)-\hat \phi_n(a) =\int_a^b\frac{f\left(\hat\phi_n(\xi)\right)-f(\ell^-)-c_{\ell_{\pm}}\left( \hat\phi_n(\xi)-\ell^-\right)}{{\epsilon_1}D\left(\hat\phi_n(\xi)\right)}\, d\xi,\qquad n\ge N,
\end{equation*} 
we obtain
\begin{equation*}
\hat \phi(b)-\hat \phi(a) =\int_a^b\frac{f\left(\hat \phi(\xi)\right)-f(\ell^-)-c_{\ell_{\pm}}\left( \hat \phi(\xi-\ell^-\right)}{{\epsilon_1}D\left(\hat \phi(\xi)\right)}\, d\xi.
\end{equation*}
This proves \eqref{e:phiphi}.

{\em (f)} By \eqref{e:claim1n}, condition \eqref{e:eps1eps2} is satisfied in $[\tau_0, 0]$; we prove \eqref{e:eps1eps2} for $\xi <\tau_0$ with a reasoning as in {\em (i)(b)}. 

\smallskip

This concludes the proof of \eqref{e:eps1eps2}. We are left with the last claim of the statement in the lemma. If either $D(0)>0$ or $\ell^->0$, an argument as in {\em (i)(b)} shows that \eqref{e:eps1eps2} is satisfied with the strict inequality. When  $D(0)=\ell^-=0$ both profiles are sharp by Theorem \ref{t:main}; as a consequence, there exists $\xi_0<0$ such that $\phi_{\epsilon_1}(\xi)=\phi_{\epsilon_2}(\xi)=0$ for $\xi\le \xi_0$.
\end{proof}

\begin{remark}
With reference to the proof of Lemma \ref{l:forTHCL}, by \eqref{e:claim1n} we have $\phi_{\eps_1}(\xi_n+\zeta_{n+1})=\phi_{n+1}(\xi_n)< \phi_{\eps_2}(\xi_n)= \phi_n(\xi_n)=\phi_{\eps_1}(\xi_n+\zeta_n)$ and so $\zeta_{n+1}<\zeta_n$. Moreover, by arguing as to prove \eqref{e:claim1n}, one shows $\phi_n>\phi_{\epsilon_2}$ in $(\xi_n,0]$, whence $\phi_{\eps_1}(\zeta_n)=\phi_n(0)>\phi_{\eps_2}(0)=\alpha$. All in all, $0<\zeta_{n+1}<\zeta_n$.
\end{remark}

\begin{proofof}{Theorem \ref{t:conv left}}
By Lemma \ref{l:forTHCL} the family $\{\phi_\eps\}_{\eps}$ is decreasing in $(-\infty,0]$ by \eqref{e:eps1eps2}, we can define
\begin{equation}\label{e:phi0}
\phi_0(\xi):=\lim_{\epsilon \to 0^+}\phi_{\epsilon}(\xi), \qquad \xi \le 0.
\end{equation}
We have $\phi_0(\xi)\ge \ell^-$ for every $\xi \in(-\infty,0]$; moreover, $\phi_0$ is monotone increasing in $(-\infty,0]$ as pointwise limit of increasing functions. As a consequence, if $\phi_0(\overline{\xi})>\ell^-$ for some $\overline{\xi}<0$ then $\phi_0>\ell^-$ in the interval $[\overline{\xi}, 0]$.
%\begin{proposition}\label{p:phi0m}
%Assume \emph{(f)}, \emph{(D1)} and \eqref{e:Dslope} and let $f^{\, \prime}(\alpha)-c_{\ell_{\pm}}<0$.  The function $\phi_0$ defined in \eqref{e:phi0} is increasing.
%\end{proposition}
%\begin{proof} \ Let $\xi_1<\xi_2\le0$. Since each $\phi_{\epsilon}$ is strictly monotone in $(-\infty, 0]$ (see Theorem \ref{t:main}), we obtain $\phi_{\epsilon}(\xi_1)<\phi_{\epsilon}(\xi_2), \, \epsilon \in (0,1]$; by passing to the limit, when $\epsilon \to 0^+$ we deduce that $\phi_0(\xi_1) \le \phi_0(\xi_2)$.
%\end{proof}
%\begin{remark}\label{r:phi0m} \ As a consequence of Proposition \ref{p:phi0m} we have that $\phi_0(\overline{\xi})>\ell^-$ for some $\overline{\xi}<0$ implies  $\phi_0>\ell^-$ in all the interval $[\overline{\xi}, 0]$.
%\end{remark}

We prove now \eqref{e:conv left}. We reason by contradiction and assume the existence of $\xi_0<0$ such that $\phi_0(\xi_0)>\ell^-$. By what we have noticed just above, we deduce $\phi_0(\xi)>\ell^-$ for $\xi \in [\xi_0, 0]$. Let $\xi_1\in (\xi_0, 0)$. By the monotonicity of each $\phi_{\epsilon}, \, \epsilon \in [0,1]$, we have
\begin{equation*}
\ell^-<\mu_0:=\phi_0(\xi_0)\le\phi_0(\xi)< \phi_{\epsilon}(\xi)<\phi_1(\xi)\le\phi_1(\xi_1)=:\mu_1<\alpha,
\end{equation*}
for $\xi \in [\xi_0, \xi_1]$, $\epsilon \in (0, 1]$. As a consequence, for each $\epsilon \in (0,1]$, condition \eqref{e:existence about f}$_1$ implies
\begin{equation*}
\min_{\mu_0\le \rho \le \mu_1}\left[f(\rho)-\left(f(\alpha) + c_{\ell_{\pm}}(\rho-\alpha)\right)\right]=\sigma,
\end{equation*}
for some $\sigma >0$.
On the other hand, by \eqref{e:deriv} we have
%\begin{equation*}
%\epsilon D\left(\phi_{\epsilon}(\xi)\right)\phi_{\epsilon}^{\, \prime}(\xi) = f\left(\phi_{\epsilon}(\xi)\right)-\left( c_{\ell_{\pm}}\left(\phi_{\epsilon}(\xi)-\alpha\right)+ f(\alpha)\right), \quad \xi \in [\xi_0, \xi_1], \, \epsilon \in (0,1]
%\end{equation*}
%and hence
%$\epsilon D\left(\phi_{\epsilon}(\xi)\right)\phi_{\epsilon}^{\, \prime}(\xi)\ge \sigma$ for $\xi \in [\xi_0, \xi_1]$ and $\epsilon \in (0,1]$.
%As a consequence,
\begin{equation*}
\phi_{\epsilon}^{\, \prime}(\xi)\ge  \frac{\sigma}{\epsilon D\left(\phi_{\epsilon}(\xi)\right)} \ge \frac{\sigma}{\epsilon \displaystyle{\max_{\mu_0\le \rho\le \mu_1}} D(\rho)}, \qquad\xi \in [\xi_0, \xi_1], \, \epsilon \in (0,1],
\end{equation*}
which contradicts that $\phi_{\epsilon}(0)=\alpha$ for every $\epsilon>0$.

At last, by the estimate
\begin{equation*}
\phi_{\epsilon}(\xi)\le \phi_{\epsilon}(\delta), \quad \xi \ge \delta, \, \, \epsilon >0,
\end{equation*}
we obtain the uniform convergence  in any half line $[\delta, \infty)$, with $\delta >0$; the reasoning is similar in any half line $(-\infty, -\delta)$, again with $\delta >0$.
\end{proofof}

%%%%%%%%%%%%%%%%%%%%%%%%%%%%%%%%%%%%%%%%%%%%%%%%%%
\begin{proofof}{Lemma \ref{l:D1no}}
First, assume that $v$ does not decrease. If $v'$ changes sign once, then $v$ first decreases and then increases. This contradicts the assumptions $v\ge0$ in $[0,1)$ and $v(1)=0$. If $v'$ changes sign twice, then it vanishes twice in $(0,1)$ and the same must occur for $D$. This contradicts (D1).

We are left with the case when $v$ decreases. We denote $w(\rho) = \delta + \tau\rho v'(\rho)$. In order that (D1) holds we need that $w$ changes sign from the positive to the negative at $\alpha$.

If $w(\rho) =k(\alpha-\rho)$ for some $k>0$, then $v(\rho) = \frac{k\alpha-\delta}{\tau}\log\rho + \frac{k}{\tau}(1-\rho)$, with $k\alpha\le \delta$ in order that $v$ may decrease. Indeed, we need $k\alpha=\delta$ in order that $v$ is bounded and then, by imposing $v(1)=1$, we have $\eqref{e:vW}_1$.
This velocity satisfies (fcm) and we deduce $D(\rho)= \frac{\delta^2}{\alpha^2\tau}\rho(\alpha-\rho)$.

This example has the drawback that $D(1)\ne0$; if we define for instance $w(\rho) =k(\alpha-\rho)(1-\rho)$, then, choosing again $k\alpha=\delta$ and imposing $v(1)=0$, we deduce $\eqref{e:vW}_2$ with $D(\rho) = \frac{\delta^2}{\alpha^2\tau}\rho(\alpha-\rho)(1-\rho)(1+\alpha-\rho)$.

About condition \eqref{e:existence about f}, in the case $\eqref{e:vW}_1$ the function $f$ is concave. In the latter, it is easy to prove that any line through $\left(\alpha,f(\alpha)\right)$ intersects the graph of $f$ at most at one point different from $\alpha$. At last, it is obvious from \eqref{e:vW} that condition \eqref{e:vvs} fails in both cases.
\end{proofof}

\begin{proofof}{Lemma \ref{l:quadraticv}}
The line $y=\sigma \rho$ meets the graph of $(1-\rho)^3$ precisely at one point in the interval $(0,1)$; this defines $\alpha\in(0,1)$ by \eqref{e:sigma} and then $D$ satisfies (D1). Note that $\alpha$ covers the interval $(0,1)$ when $\sigma$ ranges in $(0,\infty)$; in particular we can take $\alpha\in(\frac12,1)$ for a suitable choice of the parameters $\ov,h,\tau$.

The pairs $(\ell^-,\ell^+)$ are constructed by a geometric argument. Denote by $r_m(\rho)= m(\rho-\alpha) + f(\alpha)$ a generic line through $\left(\alpha, f(\alpha)\right)$, parametrized by $m$; we need that the equation $f(\rho) = r_m(\rho)$ has two solutions $\rho_\pm$ with
\begin{equation}\label{e:rhopm}
0<\rho_-<\alpha<\rho_+<1.
\end{equation}
We have $f(\rho) = r_m(\rho)$ if and only if $\ov\rho(1-\rho)^2 = m(\rho-\alpha) + \ov\alpha(1-\alpha)^2$. Since $\rho(1-\rho)^2-\alpha(1-\alpha)^2=(\rho-\alpha)\left(\rho^2 -(2-\alpha)\rho+(1-\alpha)^2\right)$,  then the previous equation is satisfied iff
\begin{equation}\label{e:snde}
\rho^2 -(2-\alpha)\rho+(1-\alpha)^2-\mu=0,\quad \mu:=m/\ov.
\end{equation}
We need that \eqref{e:snde} has roots $\rho_\pm$ satisfying \eqref{e:rhopm}. The discriminant %$-3\alpha^2+4\alpha+4\mu$
of \eqref{e:snde} is positive iff
\begin{equation}\label{e:mu1}
\mu>-\alpha\left(1-\frac34\alpha\right).
\end{equation}
If \eqref{e:mu1} holds then the roots of \eqref{e:snde} are
\(
\rho_\pm = \frac{2-\alpha\pm\sqrt{-3\alpha^2+4\alpha+4\mu}}{2}.
\)
Now, we check \eqref{e:rhopm}.

$\bullet$ We have $0<\rho_-$ iff $2-\alpha>\sqrt{-3\alpha^2+4\alpha+4\mu}$, i.e.,
\begin{equation}\label{e:mu2}
\mu<(1-\alpha)^2.
\end{equation}

$\bullet$ We have $\rho_-<\alpha$ iff $2-3\alpha<\sqrt{-3\alpha^2+4\alpha+4\mu}$, i.e.,
\begin{equation}\label{e:mu3}
\hbox{ either } \alpha\in\left(2/3,1\right)\quad  \hbox{ or } \quad \alpha\in\left(0,2/3\right] \hbox{ and } \mu>3\alpha^2-4\alpha+1=(1-\alpha)(1-3\alpha).
\end{equation}

$\bullet$ We have $\alpha<\rho_+$ iff $3\alpha-2<\sqrt{-3\alpha^2+4\alpha+4\mu}$, i.e.,
\begin{equation}\label{e:mu4}
\hbox{ either } \alpha\in\left(0,2/3\right)\quad  \hbox{ or } \quad \alpha\in\left[2/3,1\right) \hbox{ and } \mu>3\alpha^2-4\alpha+1=(1-\alpha)(1-3\alpha).
\end{equation}

$\bullet$ We have $\rho_+<1$ iff $\sqrt{-3\alpha^2+4\alpha+4\mu}<\alpha$, i.e.,
\begin{equation}\label{e:mu5}
\mu<-\alpha(1-\alpha).
\end{equation}
Therefore $\mu$ must satisfy conditions \eqref{e:mu1}, \eqref{e:mu2}, \eqref{e:mu3}, \eqref{e:mu4}, \eqref{e:mu5}. Clearly \eqref{e:mu5} implies \eqref{e:mu2}; on the other hand, \eqref{e:mu3} and \eqref{e:mu4} imply \eqref{e:mu1} because $3\alpha^2-4\alpha+1\ge-\alpha\left(1-\frac34\alpha\right)$. Then we require
\begin{equation}\label{e:finalmu}
(1-\alpha)(1-3\alpha)<\mu < -\alpha(1-\alpha).
\end{equation}
Notice that $\mu<0$ and that the inequality is nonempty iff $\alpha>\frac12$, whence the requirement in the statement. Then the pairs $(\ell^-,\ell^+)$ are found as follows: choose any $m$ in the interval $\left(\ov(1-\alpha)(1-3\alpha), -\ov\alpha(1-\alpha)\right)$; then the abscissas (different from $\alpha$) of the points of intersection of the line $r_m$ with the graph of $f$ can be taken as $\ell^\pm$.
\end{proofof}

\begin{proofof}{Lemma \ref{l:Q}}
We have $v'(\rho)=-\rho^2+(\alpha+\beta)\rho-\alpha\beta$. By integration and imposing the condition $v(1)=0$ we deduce
\[
v(\rho) = -\frac{\rho^3}{3}+ \frac{\alpha+\beta}{2}\rho^2 -\alpha\beta\rho + \gamma,\quad
\gamma:= \frac13-\frac{\alpha+\beta}{2}+\alpha\beta.
\]
We need to check when $v(\rho)\ge0$ for $\rho\in[0,1)$. By \eqref{e:v3}, in the interval $[0,1]$ the function $v$ has two minimum points at $\alpha$ and $1$. We need to prove when $v(\alpha)\ge0$. We have
\[
v(\alpha) = \frac{\alpha^3}{6}-\frac{\beta}{2}\alpha^2+\left(\beta-\frac12\right)\alpha
-\frac{\beta}{2}+\frac13=\frac16(\alpha-1)^2\left(\alpha-3\beta+2\right).
\]
At last, under \eqref{e:Dalphabeta}-\eqref{e:vv}, the velocity $v$ is decreasing in $[0,\alpha)\cup(\beta,1]$ and increasing in $(\alpha,\beta)$. Hence $v$ has a minimum at $\alpha$ and $v(\alpha)<v(\beta)$. Then
\[
\frac{f(\beta)-f(\alpha)}{\beta-\alpha} = \frac{\beta v(\beta)-\alpha v(\alpha)}{\beta-\alpha} >0.
\]
This proves the lemma.
\end{proofof}

\section*{Acknowledgments}
The authors are members of the {\em Gruppo Nazionale per l'Analisi Matematica, la Probabilit\`{a} e le loro Applicazioni} (GNAMPA) of the {\em Istituto Nazionale di Alta Matematica} (INdAM) and acknowledge financial support from this institution.

%%%%%%%%%%%%%%%%%%%%%%%%%%%%%%%%%%%%%%%%%%%%%%%%%
{\small
\bibliography{refe_panic}

\begin{thebibliography}{10}

\bibitem{Aw-Rascle}
A.~Aw and M.~Rascle.
\newblock Resurrection of ``second order'' models of traffic flow.
\newblock {\em SIAM J. Appl. Math.}, 60(3):916--938, 2000.

\bibitem{Bao-Zhou2014}
L.~Bao and Z.~Zhou.
\newblock Traveling wave in backward and forward parabolic equations from
  population dynamics.
\newblock {\em Discrete Contin. Dyn. Syst. Ser. B}, 19(6):1507--1522, 2014.

\bibitem{Bao-Zhou2017}
L.~Bao and Z.~Zhou.
\newblock Traveling wave solutions for a one dimensional model of cell-to-cell
  adhesion and diffusion with monostable reaction term.
\newblock {\em Discrete Contin. Dyn. Syst. Ser. S}, 10(3):395--412, 2017.

\bibitem{Bellomo-Delitala-Coscia}
N.~Bellomo, M.~Delitala, and V.~Coscia.
\newblock On the mathematical theory of vehicular traffic flow. {I}. {F}luid
  dynamic and kinetic modelling.
\newblock {\em Math. Models Methods Appl. Sci.}, 12(12):1801--1843, 2002.

\bibitem{Bellomo-Dogbe}
N.~Bellomo and C.~Dogbe.
\newblock On the modeling of traffic and crowds: a survey of models,
  speculations, and perspectives.
\newblock {\em SIAM Rev.}, 53(3):409--463, 2011.

\bibitem{Bressan}
A.~Bressan.
\newblock {\em Hyperbolic systems of conservation laws}.
\newblock Oxford University Press, 2000.

\bibitem{BTTV}
L.~Bruno, A.~Tosin, P.~Tricerri, and F.~Venuti.
\newblock Non-local first-order modelling of crowd dynamics: a multidimensional
  framework with applications.
\newblock {\em Appl. Math. Model.}, 35(1):426--445, 2011.

\bibitem{CCR}
G.~M. Coclite, A.~Corli, and L.~di~Ruvo.
\newblock Vanishing viscosity limits of scalar equations with degenerate
  diffusivity.
\newblock {\em Preprint}, 2017.

\bibitem{Colombo-Rosini2005}
R.~M. Colombo and M.~D. Rosini.
\newblock Pedestrian flows and non-classical shocks.
\newblock {\em Math. Methods Appl. Sci.}, 28(13):1553--1567, 2005.

\bibitem{CdRMR}
A.~Corli, L.~di~Ruvo, L.~Malaguti, and M.~D. Rosini.
\newblock Traveling waves for degenerate diffusive equations on networks.
\newblock {\em Netw. Heterog. Media}, 12(3):339--370, 2017.

\bibitem{Dafermos}
C.~M. Dafermos.
\newblock {\em Hyperbolic conservation laws in continuum physics}.
\newblock Springer-Verlag, Berlin, third edition, 2010.

\bibitem{Ferracuti-Marcelli-Papalini}
L.~Ferracuti, C.~Marcelli, and F.~Papalini.
\newblock Travelling waves in some reaction-diffusion-aggregation models.
\newblock {\em Adv. Dyn. Syst. Appl.}, 4(1):19--33, 2009.

\bibitem{Garavello-Han-Piccoli_book}
M.~Garavello, K.~Han, and B.~Piccoli.
\newblock {\em Models for vehicular traffic on networks}.
\newblock American Institute of Mathematical Sciences (AIMS), Springfield, MO,
  2016.

\bibitem{GK}
B.~H. Gilding and R.~Kersner.
\newblock {\em Travelling waves in nonlinear diffusion-convection reaction}.
\newblock Birkh\"auser Verlag, Basel, 2004.

\bibitem{Helbing}
D.~Helbing.
\newblock Traffic and related self-driven many-particle systems.
\newblock {\em Rev. Mod. Phys.}, 73:1067--1141, 2001.

\bibitem{Herty-Illner}
M.~Herty and R.~Illner.
\newblock On stop-and-go waves in dense traffic.
\newblock {\em Kinet. Relat. Models}, 1(3):437--452, 2008.

\bibitem{Herty-Puppo-Roncoroni-Visconti}
M.~Herty, G.~Puppo, S.~Roncoroni, and G.~Visconti.
\newblock Propagation of waves in kinetic traffic models.
\newblock {\em Preprint}, 2018.

\bibitem{Kerner}
B.~S. Kerner.
\newblock Experimental features of self-organization in traffic flow.
\newblock {\em Phys. Rev. Lett.}, 81:3797--3800, 1998.

\bibitem{Kerner-Osipov}
B.~S. Kerner and V.~V. Osipov.
\newblock {\em Autosolitons}.
\newblock Kluwer Academic Publishers Group, Dordrecht, 1994.

\bibitem{Kruzhkov}
S.~N. Kru{\v{z}}kov.
\newblock First order quasilinear equations with several independent variables.
\newblock {\em Mat. Sb. (N.S.)}, 81 (123):228--255, 1970.

\bibitem{Kuzmin-Ruggerini}
M.~Kuzmin and S.~Ruggerini.
\newblock Front propagation in diffusion-aggregation models with bi-stable
  reaction.
\newblock {\em Discrete Contin. Dyn. Syst. Ser. B}, 16(3):819--833, 2011.

\bibitem{LeFloch}
P.~G. LeFloch.
\newblock {\em Hyperbolic systems of conservation laws}.
\newblock Lectures in Mathematics ETH Z\"urich. Birkh\"auser Verlag, Basel,
  2002.

\bibitem{Lighthill-Whitham}
M.~J. Lighthill and G.~B. Whitham.
\newblock On kinematic waves. {II}. {A} theory of traffic flow on long crowded
  roads.
\newblock {\em Proc. Roy. Soc. London. Ser. A.}, 229:317--345, 1955.

\bibitem{Maini-Malaguti-Marcelli-Matucci2006}
P.~K. Maini, L.~Malaguti, C.~Marcelli, and S.~Matucci.
\newblock Diffusion-aggregation processes with mono-stable reaction terms.
\newblock {\em Discrete Contin. Dyn. Syst. Ser. B}, 6(5):1175--1189, 2006.

\bibitem{Maini-Malaguti-Marcelli-Matucci2007}
P.~K. Maini, L.~Malaguti, C.~Marcelli, and S.~Matucci.
\newblock Aggregative movement and front propagation for bi-stable population
  models.
\newblock {\em Math. Models Methods Appl. Sci.}, 17(9):1351--1368, 2007.

\bibitem{Nelson_2000}
P.~Nelson.
\newblock Synchronized traffic flow from a modified {L}ighthill-{W}hitham
  model.
\newblock {\em Phys. Review E}, 61:R6052--R6055, 2000.

\bibitem{Nelson_2002}
P.~Nelson.
\newblock Traveling-wave solutions of the diffusively corrected kinematic-wave
  model.
\newblock {\em Math. Comput. Modelling}, 35(5-6):561--579, 2002.

\bibitem{Padron}
V.~Padr\'on.
\newblock Effect of aggregation on population recovery modeled by a
  forward-backward pseudoparabolic equation.
\newblock {\em Trans. Amer. Math. Soc.}, 356(7):2739--2756, 2004.

\bibitem{Payne}
H.~J. Payne.
\newblock Models of freeway traffic and control.
\newblock {\em Simulation Council Proc.}, 1:51--61, 1971.

\bibitem{Richards}
P.~I. Richards.
\newblock Shock waves on the highway.
\newblock {\em Oper. Res.}, 4:42--51, 1956.

\bibitem{Rosinibook}
M.~D. Rosini.
\newblock {\em Macroscopic models for vehicular flows and crowd dynamics:
  theory and applications}.
\newblock Springer, Heidelberg, 2013.

\bibitem{Venuti-Bruno}
F.~Venuti and L.~Bruno.
\newblock An interpretative model of the pedestrian fundamental relation.
\newblock {\em C. R. Mech.}, 335:194--200, 2007.

\bibitem{Volpert-Hudjaev}
A.~I. Volpert and S.~I. Hudjaev.
\newblock The {C}auchy problem for second order quasilinear degenerate
  parabolic equations.
\newblock {\em Mat. Sb. (N.S.)}, 78 (120):374--396, 1969.

\bibitem{Zhang}
H.~M. Zhang.
\newblock A non-equilibrium traffic model devoid of gas-like behavior.
\newblock {\em Transp. Res. B}, 36:275--290, 2002.

\end{thebibliography}
\bibliographystyle{abbrv}
}
\end{document}